\begin{document}

\newcommand{\C}{{\mathbb{C}}}
\newcommand{\R}{{\mathbb{R}}}
\newcommand{\Q}{B}
\newcommand{\Z}{{\mathbb{Z}}}
\newcommand{\N}{{\mathbb{N}}}
\newcommand{\q}{\left}
\newcommand{\w}{\right}
\newcommand{\Ninf}{\N_\infty}
\newcommand{\Vol}[1]{\mathrm{Vol}\q(#1\w)}
\newcommand{\B}[4]{B_{\q(#1,#2\w)}\q(#3,#4\w)}
\newcommand{\CjN}[3]{\q\|#1\w\|_{C^{#2}\q(#3\w)}}
\newcommand{\Cj}[2]{C^{#1}\q( #2\w)}
\newcommand{\grad}{\bigtriangledown}
\newcommand{\sI}[2]{\mathcal{I}\q(#1,#2 \w)}
\newcommand{\Det}[1]{\det_{#1\times #1}}
\newcommand{\sK}{\mathcal{K}}
\newcommand{\sKt}{\widetilde{\mathcal{K}}}
\newcommand{\sA}{\mathcal{A}}
\newcommand{\sB}{\mathcal{B}}
\newcommand{\sC}{\mathcal{C}}
\newcommand{\sD}{\mathcal{D}}
\newcommand{\sS}{\mathcal{S}}
\newcommand{\sL}{\mathcal{L}}
\newcommand{\sF}{\mathcal{F}}
\newcommand{\sQ}{\mathcal{Q}}
\newcommand{\sV}{\mathcal{V}}
\newcommand{\sM}{\mathcal{M}}
\newcommand{\sT}{\mathcal{T}}
\newcommand{\sPh}{\widehat{\sP}}
\newcommand{\sNh}{\widehat{\sN}}
\newcommand{\cV}{\q( \sV\w)}
\newcommand{\vsig}{\varsigma}
\newcommand{\vsigt}{\widetilde{\vsig}}
\newcommand{\dil}[2]{#1^{\q(#2\w)}}
\newcommand{\dilp}[2]{#1^{\q(#2\w)_{p}}}
\newcommand{\lA}{-\log_2 \sA}
\newcommand{\eh}{\widehat{e}}
\newcommand{\Ho}{\mathbb{H}^1}
\newcommand{\sd}{\sum d}
\newcommand{\dt}{\tilde{d}}
\newcommand{\dhc}{\hat{d}}
\newcommand{\Span}[1]{\mathrm{span}\q\{ #1 \w\}}
\newcommand{\dspan}[1]{\dim \Span{#1}}
\newcommand{\K}{K_0}
\newcommand{\ad}[1]{\mathrm{ad}\q( #1 \w)}
\newcommand{\LtOpN}[1]{\q\|#1\w\|_{L^2\rightarrow L^2}}
\newcommand{\LpOpN}[2]{\q\|#2\w\|_{L^{#1}\rightarrow L^{#1}}}
\newcommand{\LpN}[2]{\q\|#2\w\|_{L^{#1}}}
\newcommand{\Jac}{\mathrm{Jac}\:}
\newcommand{\kapt}{\widetilde{\kappa}}
\newcommand{\gt}{\widetilde{\gamma}}
\newcommand{\gtt}{\widetilde{\widetilde{\gamma}}}
\newcommand{\gh}{\widehat{\gamma}}
\newcommand{\Sh}{\widehat{S}}
\newcommand{\Wh}{\widehat{W}}
\newcommand{\sMh}{\widehat{\sM}}
\newcommand{\Ih}{\widehat{I}}
\newcommand{\Wt}{\widetilde{W}}
\newcommand{\Xt}{\widetilde{X}}
\newcommand{\Tt}{\widetilde{T}}
\newcommand{\Dt}{\widetilde{D}}
\newcommand{\Phit}{\widetilde{\Phi}}
\newcommand{\Vh}{\widehat{V}}
\newcommand{\Xh}{\widehat{X}}
\newcommand{\ch}{\widehat{c}}
\newcommand{\deltah}{\widehat{\delta}}
\newcommand{\sSh}{\widehat{\mathcal{S}}}
\newcommand{\sFh}{\widehat{\mathcal{F}}}
\newcommand{\thetah}{\widehat{\theta}}
\newcommand{\ct}{\widetilde{c}}
\newcommand{\at}{\tilde{a}}
\newcommand{\bt}{\tilde{b}}
\newcommand{\fg}{\mathfrak{g}}
\newcommand{\cC}{\q( \sC\w)}
\newcommand{\cG}{\q( \sC_{\fg}\w)}
\newcommand{\cJ}{\q( \sC_{J}\w)}
\newcommand{\cY}{\q( \sC_{Y}\w)}
\newcommand{\cYu}{\q( \sC_{Y}\w)_u}
\newcommand{\cJu}{\q( \sC_{J}\w)_u}
\newcommand{\Bb}{\overline{B}}
\newcommand{\Qb}{\overline{Q}}
\newcommand{\sP}{\mathcal{P}}
\newcommand{\sN}{\mathcal{N}}
\newcommand{\cH}{\q(\mathcal{H}\w)}
\newcommand{\Omegat}{\widetilde{\Omega}}
\newcommand{\Kt}{\widetilde{K}}
\newcommand{\sMt}{\widetilde{\sM}}
\newcommand{\denum}[2]{#1_{#2}}
\newcommand{\phit}{\tilde{\phi}}
\newcommand{\nuset}{\q\{1,\ldots, \nu\w\}}
\newcommand{\diam}[1]{ {\mathrm{diam}}\q\{#1\w\} }
\newcommand{\Nt}{\widetilde{N}}
\newcommand{\psit}{\widetilde{\psi}}
\newcommand{\sigt}{\widetilde{\sigma}}
\newcommand{\muoS}{\q\{\mu_1\w\}}
\newcommand{\Mjcutoff}{\nu+2}
\newcommand{\LplqOpN}[3]{\q\| #3 \w\|_{L^{#1}\q(\ell^{#2}\q(\N^{\nu} \w) \w)\rightarrow L^{#1}\q( \ell^{#2 }\q( \N^{\nu} \w) \w) }}
\newcommand{\LplqN}[3]{\q\| #3 \w\|_{L^{#1}\q(\ell^{#2 }\q(\N^{\nu} \w) \w) }}
\newcommand{\iinf}{\iota_\infty}
\newcommand{\jz}{\ell}
\newcommand{\Lpp}[2]{L^{#1}\q(#2\w)}
\newcommand{\Lppn}[3]{\q\|#3\w\|_{\Lpp{#1}{#2}}}
\newcommand{\Ewmu}{E\cup \q\{\mu_1\w\}}
\newcommand{\Ewmuc}{\q(\Ewmu\w)^c}
\newcommand{\ip}[2]{\q< #1, #2 \w>}
\newcommand{\hd}{\widehat{d}}
\newcommand{\mset}{\q\{ 1,\ldots, m\w\}}
\newcommand{\expL}[1]{\exp_L\q( #1\w)}

\newtheorem{thm}{Theorem}[section]
\newtheorem{cor}[thm]{Corollary}
\newtheorem{prop}[thm]{Proposition}
\newtheorem{lemma}[thm]{Lemma}
\newtheorem{conj}[thm]{Conjecture}

\theoremstyle{remark}
\newtheorem{rmk}[thm]{Remark}

\theoremstyle{definition}
\newtheorem{defn}[thm]{Definition}

\theoremstyle{remark}
\newtheorem{example}[thm]{Example}

\numberwithin{equation}{section}

\title{Multi-parameter singular Radon transforms III: real analytic surfaces}
\author{Elias M. Stein\footnote{Partially supported by NSF DMS-0901040.} and Brian Street\footnote{Partially supported by NSF DMS-0802587.}}
\date{}

\maketitle

\begin{abstract}
The goal of this paper is to study operators of the form,
\[
Tf(x)= \psi(x)\int f(\gamma_t(x))K(t)\: dt,
\]
where $\gamma$ is a real analytic function defined on a neighborhood
of the origin in $(t,x)\in \R^N\times \R^n$,
satisfying $\gamma_0(x)\equiv x$, $\psi$ is a cutoff
function supported near $0\in \R^n$, and $K$ is a ``multi-parameter
singular kernel'' supported near $0\in \R^N$.
A main example is when $K$ is a ``product kernel.''
We also study maximal operators of the form,
\[
\mathcal{M} f(x) = \psi(x)\sup_{0<\delta_1,\ldots, \delta_N<<1} \int_{|t|<1} |f(\gamma_{\delta_1 t_1,\ldots,\delta_N t_N}(x))|\: dt.
\]
We show that $\mathcal{M}$ is bounded on $L^p$ ($1<p\leq \infty$).
We give conditions on $\gamma$ under which $T$ is bounded on $L^p$ ($1<p<\infty$); these conditions hold automatically when $K$ is a Calder\'on-Zygmund kernel.  This is the final paper in a three part series.  The first two
papers consider the more general case when $\gamma$ is $C^\infty$.

\end{abstract}

\section{Introduction}
In this paper we consider operators of the form,
\begin{equation}\label{EqnIntroOpForm}
Tf\q( x\w) = \psi\q( x\w) \int f\q( \gamma_t\q( x\w)\w) K\q( t\w)\: dt,
\end{equation}
where $\psi\in C_0^\infty\q( \R^n\w)$ is supported near $0$,
$\gamma_t\q( x\w):\R_0^N\times \R_0^n\rightarrow \R^n$
is a germ of a real analytic function (defined on a neighborhood of
$\q(0,0\w)$) satisfying $\gamma_0\q( x\w)\equiv x$,\footnote{Here we
write $f:\R_0^N\rightarrow \R^m$ to denote that $f$ is a germ
of a function defined on a neighborhood of $0$.}
and $K$ is a ``multi-parameter'' distribution kernel, supported
near $0\in \R^N$.  For instance, one could take $K$ to be
a ``product kernel'' supported near $0$.\footnote{Our main theorem
applies to kernels more general than product kernels.}
To define this notion, suppose we have decomposed $\R^N=\R^{N_1}\times\cdots\times \R^{N_{\nu}}$.
A product kernel satisfies
\begin{equation}\label{EqnProductKernelCond}
\q|\partial_{t_1}^{\alpha_1}\cdots \partial_{t_\nu}^{\alpha_\nu} K\q( t\w)\w|\lesssim \q|t_1\w|^{-N_1-\q|\alpha_1\w|}\cdots \q|t_{\nu}\w|^{-N_{\nu}-\q|\alpha_\nu\w|},
\end{equation}
along with certain ``cancellation conditions'' (see Section 16 of
\cite{StreetMultiParameterSingRadonLt}).\footnote{The simplest
example of a product kernel is given by
$K\q(t_1,\ldots, t_\nu\w)=K_1\q(t_1\w)\otimes\cdots\otimes K_\nu\q(t_\nu\w)$,
where $K_1,\ldots, K_\nu$ are Calder\'on-Zygmund kernels.  That is, they
satisfy $\q|\partial_{t_j}^{\alpha} K_j\q(t_j\w)\w|\lesssim \q|t_j\w|^{-N_j-\q|\alpha\w|}$, again along with certain ``cancellation conditions.''  When
$\nu=1$, the class of product kernels is exactly the class of
Calder\'on-Zygmund kernels.  For a precise statement
of these cancellation conditions, see Section 16 of \cite{StreetMultiParameterSingRadonLt}; we do not make it precise in this paper, since we deal
with more general kernels.}
We will also study maximal operators of the form,
\begin{equation*}
\sM f\q( x\w) = \sup_{0<\delta_1,\ldots,\delta_{\nu}\leq a} \psi\q( x\w) \int_{\q|t\w|\leq 1} \q|f\q( \gamma_{\q(\delta_1 t_1,\ldots, \delta_\nu t_\nu\w)}\q( x\w)\w)\w|\: dt_1\: \cdots \: dt_{\nu},
\end{equation*}
where $\psi$ is as before with $\psi\geq 0$, and $a>0$ is assumed to be small.

First we describe our results in the single parameter case ($\nu=1$).
In that case, we consider $K$ to be a standard Calder\'on-Zygmund
kernel supported near $0$ (a Calder\'on-Zygmund kernel is the special
case of product kernels with $\nu=1$).
In this case, the operator in \eqref{EqnIntroOpForm} is bounded on $L^p$ ($1<p<\infty$)
with no additional assumptions (provided $\psi$ and $K$ have
sufficiently small support, depending on $\gamma$).
Similarly, when $\nu=1$, $\sM$ is bounded on $L^p$ ($1<p\leq \infty$) (provided
$\psi$ has small enough support and $a>0$ is sufficiently small).

When we move to the multi-parameter case, the study of $T$ and
$\sM$ diverge.
The results for $\sM$ are simple to state:  just as in the single-parameter
case, $\sM$ is bounded on $L^p$ ($1<p\leq \infty$) with no additional
assumptions.  In fact, this will follow from proving the
$L^p$ boundedness for even stronger maximal operators.

Unfortunately, the results for $T$ are not so simple.
Indeed, for $\gamma:\R^2\times \R\rightarrow \R$,
given by $\gamma_{\q( s,t\w)}\q( x\w)= x-st$,
there are product kernels $K\q(s,t\w)$ of arbitrarily
small support such that $T$ is not bounded on $L^2$
(this was first noted in \cite{NagelWaingerL2BoundednessOfHilbertTransformsMultiParameterGroup}, see also Section 17.5 of \cite{StreetMultiParameterSingRadonLt}).
Thus, it is necessary to introduce additional assumptions on $\gamma$
to obtain the $L^p$ boundedness of $T$.

We now describe a special case of our results, for $\nu$-parameter
product kernels $K\q( t_1,\ldots, t_{\nu}\w)$ (thus satisfying
\eqref{EqnProductKernelCond}).
In
\cite{ChristNagelSteinWaingerSingularAndMaximalRadonTransforms}
it was shown that $\gamma$ could be written asymptoticly
in the form,\footnote{\eqref{EqnIntroAsym} simply means
$\gamma_t\q( x\w) = \exp\q(\sum_{0<\q|\alpha\w|< M}t^{\alpha} X_\alpha \w)x+O\q( \q|t\w|^M\w)$, for every $M$ as $t\rightarrow 0$.  In particular, the reader may wish to consider just the case
when $\gamma_t\q( x\w)=\exp\q(\sum_{0<\q|\alpha\w|\leq L} t^{\alpha}X_\alpha\w)x$,
and the $X_\alpha$ are germs of real analytic vector fields.}
\begin{equation}\label{EqnIntroAsym}
\gamma_t\q(x\w) \sim \exp\q(\sum_{\q|\alpha\w|>0} t^{\alpha}X_\alpha \w)x,
\end{equation}
where each $X_\alpha$ is a real analytic vector field.
Separate each multi-index $\alpha=\q( \alpha_1,\ldots, \alpha_\nu\w)$,
where $\alpha_\mu\in \N^{N_{\mu}}$ is a multi-index, and
$t^{\alpha}=t_1^{\alpha_1}\cdots t_{\nu}^{\alpha_\nu}$.
We call $\alpha$ a {\it pure power} if $\alpha_\mu\ne 0$ for only one $\mu$.
Otherwise, we call $\alpha$ a {\it non-pure power}.

A special case of our theorem is as follows:  if $X_\alpha=0$ for
every non-pure power $\alpha$, then $T$ is bounded on $L^p$ ($1<p<\infty$).  In
the single parameter case, every power is a pure power, and so this
subsumes the single-parameter result.
In fact, we will be able to deal with some cases when the non-pure
powers are not necessarily zero.  Our assumption will be that
the pure powers ``control'' the non-pure powers, in an appropriate sense.

This paper is the third in a series. The first two
\cite{StreetMultiParameterSingRadonLt,SteinStreetMultiParameterSingRadonLp}
dealt with the more general situation when $\gamma$ is $C^\infty$,
instead of real analytic. The theorems in those papers took a rather
complicated form. We will see that after an appropriate
``preparation theorem,\footnote{The preparation theorem we need is
a Weierstrass type preparation theorem due to Galligo
\cite{GalligoTheoremeDeDivisionEtStabiliteEnGeometrieAnalytiqueLocale}.}''
the main result in this paper for the singular Radon transform $T$
is actually a special case of the results in
\cite{SteinStreetMultiParameterSingRadonLp}. The idea is that when
$\gamma$ is assumed to be real analytic, many of the assumptions in
\cite{SteinStreetMultiParameterSingRadonLp} come for free.
See \cite{SteinStreetMultiparameterSingularRadonTransformsAnnounce}
for an announcement of this series, and an overview tying all
three papers together.

The maximal operator $\sM$ is not a special case of the results in
\cite{SteinStreetMultiParameterSingRadonLp}. Indeed, we will prove a
new maximal result concerning $C^{\infty}$ $\gamma$ (see Section
\ref{SectionMoreGenMax}) which will imply the maximal result for
$\sM$.  While this result was not covered in
\cite{SteinStreetMultiParameterSingRadonLp}, we will see that many
of the methods can be transfered to this situation, and the main
outline of the proof is quite similar.

\section{A motivating special case}
In this section we explain our argument in a special motivating case,
which contains an essential point which we will use (in various forms) throughout
the paper.

First we describe a special case
of the results in
\cite{ChristNagelSteinWaingerSingularAndMaximalRadonTransforms}
in the $C^\infty$ context.
Indeed, suppose for each multi-index $\alpha$, $0< \q|\alpha\w|\leq L$, we are given
a $C^\infty$ vector fields $X_\alpha$ defined on a neighborhood of $0$.
Suppose that this collection of vector fields satisfies H\"omander's
condition at $0$:  the set of $X_\alpha$ along with all their
commutators of all orders spans the tangent space at $0$.
Define a function $\gamma$ by,
\begin{equation*}
\gamma_t\q( x\w) = \exp\q(\sum_{0<\q|\alpha\w|\leq L} t^{\alpha} X_\alpha \w)x.
\end{equation*}
It is a theorem of
Christ, Nagel, Stein, and Wainger \cite{ChristNagelSteinWaingerSingularAndMaximalRadonTransforms}
that the operator,
\begin{equation}\label{EqnSpecialCaseOp}
f\mapsto \psi\q( x\w) \int f\q( \gamma_t\q( x\w)\w) K\q( t\w)\: dt
\end{equation}
is bounded on $L^p$ ($1<p<\infty$), for every standard
Calder\'on-Zygmund kernel supported on a sufficiently small
neighborhood of $0$, and for $\psi\in C_0^\infty$, supported on a
sufficiently small neighborhood of $0$.

It was discussed in Section 3 of
\cite{StreetMultiParameterSingRadonLt} that one need not assume the
$X_\alpha$ satisfy H\"ormander's condition.  Instead, one may assume
the weaker condition that the involutive distribution generated by
the $X_\alpha$ is locally finitely
generated as a $C^\infty$ module.  For
then, the standard Frobenius theorem holds and foliates the ambient
space into leaves; the $X_\alpha$ satisfying H\"ormander's condition
on each leaf. As was discussed in Section 3 of
\cite{StreetMultiParameterSingRadonLt}, the methods of
\cite{ChristNagelSteinWaingerSingularAndMaximalRadonTransforms} are
not sufficient to obtain the $L^p$ boundedness of
\eqref{EqnSpecialCaseOp} in this case. Nevertheless, the $L^p$
($1<p<\infty$) boundedness holds and is a special case of the
results in \cite{SteinStreetMultiParameterSingRadonLp}.

Now we specialize to the case when the vector fields $X_\alpha$ are
real analytic.  The involutive distribution generated by a finite collection of real analytic vector fields
is {\it always} locally finitely generated
as a $C^\infty$ module.
  This fact seems to have first been noted in
\cite{NaganoLinearDifferentialSystemsWithSingularities,LobryControlabiliteDesSystemesNonLinearies},
see Section \ref{SectionRealAnal} for a further discussion. Thus,
when the vector fields are real analytic, the $L^p$ boundedness of
\eqref{EqnSpecialCaseOp} holds. This idea is the core of this entire
paper, and similar arguments will be used throughout.

\section{Kernels}\label{SectionKernels}
In this section, we will discuss the classes of kernels
$K\q( t\w)$ for which we will study operators
of the form \eqref{EqnIntroOpForm}.
The kernels which we study will be supported in
$\Q^N\q( a\w)$, where $a>0$ is some small number
to be chosen later (depending on $\gamma$).
Fix $\nu\in \N$, we will be studying $\nu$ parameter
operators.  

We suppose we are given $\nu$-parameter dilations on $\R^N$.  That is, 
we are
given $e=\q( e_1,\ldots, e_N\w)$, with each $0\ne e_j= \q( e_j^1,\ldots, e_j^{\nu}\w)\in \N^\nu$ (here, $0\in \N$).
For $\delta\in \q[0,\infty\w)^\nu$ and $t=\q( t_1,\ldots, t_N\w)\in \R^N$,
we define,\footnote{Here $\delta^{e_j}$ is defined via standard multi-index
notation: $\delta^{e_j}=\prod_\mu \delta_\mu^{e_j^\mu}$.}
\begin{equation}\label{EqnDefndeltat}
\delta t = \q( \delta^{e_1}t_1,\ldots, \delta^{e_{N}}t_N \w),
\end{equation}
thereby obtaining $\nu$-parameter dilations on $\R^N$.
For each $\mu$, $1\leq \mu\leq \nu$, let $t_\mu$ denote those coordinates
$t_j$
of $t=\q(t_1,\ldots, t_N \w)\in \R^N$ such that $e_j^\mu\ne 0$.

The class of distributions we will define depends on 
$N$, $a$, $e$, and $\nu$. 
Given a function $\vsig$ on $\R^N$, and $j\in \N^\nu$, define,
\begin{equation*}
\dil{\vsig}{2^j}\q( t\w) = 2^{j\cdot e_1+\cdots + j\cdot e_N} \vsig\q( 2^j t\w).
\end{equation*}
Note that $\dil{\vsig}{2^j}$ is defined in such a way that,
\begin{equation*}
\int \dil{\vsig}{2^j} \q( t\w) \: dt = \int \vsig\q( t\w) \: dt.
\end{equation*}

\begin{defn}\label{DefnsK}
We define $\sK=\sK\q( N,e,a,\nu\w)$ to be the set of all distributions,
$K$, of the form
\begin{equation}\label{EqnSumDefK}
K=\sum_{j\in \N^{\nu}} \dil{\vsig_j}{2^j},
\end{equation}
where $\q\{\vsig_j\w\}_{j\in\N^\nu}\subset C_0^\infty\q( \Q^N\q( a\w) \w)$ is a bounded
set, satisfying
\begin{equation*}
\int \vsig_j\q( t\w) \: dt_\mu =0,\quad 0\ne j_\mu.
\end{equation*}
It was shown in \cite{StreetMultiParameterSingRadonLt} that
any sum of the form \eqref{EqnSumDefK} converges in the
sense of distributions.
\end{defn}

See \cite{StreetMultiParameterSingRadonLt} for a more in-depth
discussion of the class $\sK$.

\begin{rmk}
The class of kernels studied in
\cite{SteinStreetMultiParameterSingRadonLp}
was slightly more general:  it was allowed to depend
on another parameter $\mu_0$, $1\leq \mu_0\leq \nu$,
and the coordinates of each $e_j$ could be
elements of $\q[0,\infty\w)$, instead of $\N$.
The results in this paper can be extended to deal with
that case as well (with essentially no additional work),
but we state the results in this simpler case for
clarity.
See Section \ref{SectionClosing} for some comments on this.
\end{rmk}

\section{Multi-parameter Carnot-Carath\'eodory geometry}\label{SectionCC}
To state our theorem regarding the singular Radon transforms given
by \eqref{EqnIntroOpForm} in full generality, we must introduce the
notion of Carnot-Carath\'eodory geometry; this notion played an
essential role in
\cite{StreetMultiParameterSingRadonLt,SteinStreetMultiParameterSingRadonLp}.
Our main reference for Carnot-Carath\'eodory geometry is
\cite{StreetMultiParameterCCBalls}, and we refer the reader there
for more information.  In this section we introduce only the most
basic definitions associated with Carnot-Carath\'eodory geometry.

Let $\Omega\subseteq \R^n$ be an open set, and let $X_1,\ldots, X_q$
be $C^\infty$ vector fields on $\Omega$.  Denote this list of vector
fields by $X$. We define the Carnot-Carath\'eodory ball, centered at
$x_0\in \Omega$, of unit radius, with respect to the list of vector
fields $X$ by
\begin{equation*}
\begin{split}
B_X\q( x_0\w):=\bigg\{y\in \Omega \:\bigg|\: &\exists \gamma:\q[0,1\w]\rightarrow \Omega, \gamma\q( 0\w) =x_0, \gamma\q( 1\w) =y,  \\
&\gamma'\q( t\w) = \sum_{j=1}^q a_j\q( t\w) X_j\q( \gamma\q(
t\w)\w),
 a_j\in L^\infty\q( \q[0,1\w]\w), \\
&\Lppn{\infty}{\q[0,1\w]}{\q(\sum_{1\leq j\leq q}
\q|a_j\w|^2\w)^{\frac{1}{2}}}<1\bigg\}.
\end{split}
\end{equation*}
Now that we have the definition of balls with unit radius, we may
define (multi-parameter) balls of any radius merely by scaling the
vector fields. To do so, we assign to each vector field, $X_j$, a
(multi-parameter) formal degree $0\ne d_j=\q(d_j^1,\ldots, d_j^\nu
\w)\in\N^\nu$.  For $\delta=\q(\delta_1,\ldots, \delta_\nu \w)\in
\q[0,\infty\w)^\nu$, we define the list of vector fields $\delta X$
to be the list $\q( \delta^{d_1} X_1,\ldots, \delta^{d_q}X_q\w)$.
Here, $\delta^{d_j}$ is defined by the standard multi-index
notation:  $\delta^{d_j} =\prod_{\mu=1}^\nu \delta_\mu^{d_j^\mu}$.
We define the ball of radius $\delta$ centered at $x_0\in \Omega$ by
$$\B{X}{d}{x_0}{\delta} := B_{\delta X}\q( x_0\w).$$

\begin{defn}\label{DefnFiniteSetControl}
Let $\q(X,d\w) = \q( X_1,d_1\w),\ldots, \q( X_q,d_q\w)$ be a finite
list of $C^\infty$ vector fields with multi-parameter formal degrees
as above. Fix $x_0\in \Omega$. Let $\q( X_0,d_0\w)$ be another
$C^\infty$ vector field with multi-parameter formal degree $0\ne
d_0\in \N^{\nu}$.  We say that $\q( X,d\w)$ {\it controls} $\q(
X_0,d_0\w)$ on a neighborhood of $x_0$ if there exists an open set
$U$ with $x_0\in U\subseteq \Omega$, and $\tau_1>0$ such that for
every $\delta \in \q[0,1\w]^\nu$, $x\in U$, there exist
$c_{x,j}^\delta \in C^0\q(\B{X}{d}{x}{\tau_1\delta} \w)$ ($1\leq j \leq
q$) such that,
\begin{itemize}
\item $\delta^{d_0} X_0 = \sum_{j=1}^q c_{x,j}^{\delta}
\delta^{d_j} X_j,$ on $\B{X}{d}{x}{\tau_1\delta}$.
\item $\sup_{\substack{\delta\in \q[0,1\w]^\nu\\ x\in U}}\sum_{\q|\alpha\w|\leq m}\CjN{\q(\delta X \w)^{\alpha}
c_{x,j}^{\delta}}{0}{\B{X}{d}{x}{\tau_1\delta}}<\infty$, for every
$m\in \N$.\footnote{For an arbitrary set $U\subseteq \R^n$, we define $\CjN{f}{0}{U}=\sup_{y\in U} \q|f\q(u\w)\w|$, and if we say $\CjN{f}{0}{U}<\infty$, we mean that $f$ is continuous on $U$ and the norm is finite.}
\end{itemize}
Note that, since $\tau_1$ and $U$ may be chosen as small as we
wish, this is a local property.
\end{defn}

\begin{defn}
Let $\sS$ be a, possibly infinite, set of germs of $C^\infty$ vector
fields, $X$, defined on a neighborhood of $x_0\in \R^n$ each paired
with a nonzero formal degree $0\ne d\in \N^{\nu}$.  Let $\q(
X_0,d_0\w)$ be another germ of a $C^{\infty}$ vector field defined
on a neighborhood of $x_0$, with formal degree $0\ne d_0\in
\N^{\nu}$. We say $\sS$ {\it controls} $\q( X_0,d_0\w)$ on a
neighborhood of $x_0$ if there is a finite subset $\sF\subseteq \sS$
such that $\sF$ controls $\q( X_0,d_0\w)$ on a neighborhood of $x_0$
in the sense of Definition \ref{DefnFiniteSetControl}.
\end{defn}

\begin{rmk}
Much more detailed information on this notion of control can be
found in Section 5.3 of \cite{StreetMultiParameterCCBalls} and
Section 11.1 of \cite{StreetMultiParameterSingRadonLt}.
\end{rmk}

\begin{defn}
Let $\sS$ be a set of germs of $C^\infty$ vector fields defined
on a neighborhood of $x_0\in \R^n$ each paired with a $\nu$-parameter
formal degree $0\ne d\in \N^\nu$.  We define $\sL\q(\sS\w)$
to be the smallest set of germs of vector fields with formal degrees
such that:
\begin{itemize}
\item $\sS\subseteq \sL\q(\sS\w)$,
\item if $\q(X_1,d_1\w),\q(X_2,d_2\w)\in \sL\q(\sS\w)$ then $\q( \q[X_1,X_2\w],d_1+d_2\w)\in \sL\q(\sS\w)$.
\end{itemize}
Furthermore, define $\sL_0\q(\sS\w)$ to be the smallest set of
germs of vector fields with formal degrees such that:
\begin{itemize}
\item $\sS\subseteq \sL_0\q(\sS\w)$,
\item if $\q(X_1,d_1\w)\in \sS$ and $\q(X_2,d_2\w)\in \sL_0\q(\sS\w)$, then
$\q( \q[X_1,X_2\w],d_1+d_2\w)\in \sL_0\q(\sS\w)$.
\end{itemize}
\end{defn}

\begin{rmk}\label{RmkJacobi}
Note, by the Jacobi identity, for every $\q(Y_0,d_0\w)\in \sL\q(\sS\w)$,
\begin{equation*}
Y_0\in \Span{Y : \q(Y,d_0\w)\in \sL_0\q(\sS\w) }.
\end{equation*}
\end{rmk}

\section{Results}
We begin by rigorously stating our maximal result.  Let $\gamma\q(
t,x\w) = \gamma_t\q(x\w): \R_0^N\times \R_0^n\rightarrow \R^n$ be a
real analytic function defined on a neighborhood of
$\q(0,0\w)\in \R^N\times \R^n$, satisfying $\gamma_0\q( x\w) \equiv
x$.

For $\psi\in C_0^\infty\q( \R^n\w)$ supported on a sufficiently
small neighborhood of $0$, $\psi\geq 0$, and $a>0$ sufficiently
small, define,
\begin{equation*}
\sM f\q( x\w) = \sup_{0<\delta_1,\ldots, \delta_N\leq 1} \psi\q(
x\w) \int_{\q|t\w|<a} \q|f\q( \gamma_{\delta_1 t_1,\ldots, \delta_N
t_N}\q( x\w) \w)\w| \: dt.
\end{equation*}
\begin{thm}\label{ThmRealAnalMax}
$\sM$ is bounded $L^p\rightarrow L^p$ ($1<p\leq\infty$), provided
$a$ is taken sufficiently small, and $\psi$ is supported on a
sufficiently small neighborhood of $0$.
\end{thm}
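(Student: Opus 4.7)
The plan is to reduce Theorem \ref{ThmRealAnalMax} to the general $C^\infty$ maximal theorem promised in Section \ref{SectionMoreGenMax}, whose hypotheses will turn out to be automatic in the real analytic category. The key mechanism is precisely the one highlighted in Section 2: by the Nagano-Lobry theorem, the involutive distribution generated by any finite collection of real analytic vector fields is locally finitely generated as a $C^\infty$ module, even when the corresponding Lie algebra is infinite-dimensional. This is the finite-generation property that lets the multi-parameter $C^\infty$ theory of \cite{SteinStreetMultiParameterSingRadonLp} (and its extension in Section \ref{SectionMoreGenMax}) apply without imposing anything like H\"ormander's condition.

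First I would invoke the Christ-Nagel-Stein-Wainger asymptotic expansion \eqref{EqnIntroAsym} to write $\gamma_t\q(x\w)\sim \exp\q(\sum_{\q|\alpha\w|>0} t^\alpha X_\alpha\w)x$, viewing each $\alpha\in \N^N$ as an $N$-parameter formal degree (so we take $\nu=N$); the real analyticity of $\gamma$ guarantees that every $X_\alpha$ is a germ of a real analytic vector field at $0$. Let $\sS=\q\{\q(X_\alpha,\alpha\w):\q|\alpha\w|>0\w\}$. Next I would apply the $C^\infty$ maximal theorem of Section \ref{SectionMoreGenMax}; I expect its hypothesis to say that every element of $\sL\q(\sS\w)$, with its natural formal degree, is controlled in the Carnot-Carath\'eodory sense of Definition \ref{DefnFiniteSetControl} by some finite subfamily $\sF\subseteq \sS$. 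Verifying this hypothesis is exactly where the Nagano-Lobry theorem enters: local finite generation of the involutive hull as a $C^\infty$ module, combined with the analyticity of the relations expressing the $X_\alpha$ and their iterated brackets in terms of a finite generating subset, should produce such a finite $\sF$ with control estimates uniform in the dilation parameter $\delta$.

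The main obstacle is the last step, namely promoting Nagano-Lobry's qualitative module-finiteness into the quantitative multi-parameter control bounds of Definition \ref{DefnFiniteSetControl}, which require uniform estimates on $\q(\delta X\w)^{\beta} c_{x,j}^{\delta}$ as $\delta$ ranges over $\q[0,1\w]^\nu$ and $x$ over a neighborhood of $0$. Here one has to exploit analyticity in a genuinely quantitative way: the coefficients that express $\sL\q(\sS\w)$ in terms of a finite generating subset are real analytic and hence satisfy derivative bounds controlled by a common radius of convergence, and these bounds must then be transferred to the Carnot-Carath\'eodory balls $\B{X}{d}{x}{\tau_1\delta}$ by a rescaling argument. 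Once this control hypothesis is in hand, the $L^p$ bound ($1<p\leq \infty$) for $\sM$ is immediate from the $C^\infty$ theorem of Section \ref{SectionMoreGenMax}.
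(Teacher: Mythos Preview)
Your overall strategy --- reduce to the $C^\infty$ maximal theorem of Section \ref{SectionMoreGenMax} and show its hypotheses hold automatically in the real analytic case --- is the paper's strategy too. But your implementation has a genuine gap: you have misread the hypotheses of Theorem \ref{ThmMoreGenMaxFunc}, and with $\nu=N$ your setup will not satisfy them.

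The hypotheses of Theorem \ref{ThmMoreGenMaxFunc} are not merely that $\sL(\sS)$ be controlled by a finite subset. They require a generating list $(X_1,d_1),\ldots,(X_r,d_r)$ in which every $d_l$ is nonzero in \emph{exactly one} component of $\N^\nu$, together with the specific coefficient normalizations \eqref{EqnGenMaxCoefNonVanish} and \eqref{EqnGenMaxCoefVanish} on the functions $c_l(t,s,x)$ that build $W_j$. If you take $\nu=N$ and the $X_\alpha$ from \eqref{EqnIntroAsym} with formal degrees $\alpha\in\N^N$, a finite generating subset will typically involve non-pure $\alpha$'s (consider $\gamma_{(s,t)}(x)=x-st$, where the only nonzero $X_\alpha$ is $X_{(1,1)}$). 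So the direct application with $\nu=N$ fails.

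The paper's fix (Section \ref{SectionReductionMaxInt}) is to pass to a stronger maximal operator with \emph{more} parameters. One first replaces $\gamma$ by the vector field $W(t,x)=\frac{d}{d\epsilon}\big|_{\epsilon=1}\gamma_{\epsilon t}\circ\gamma_t^{-1}(x)$ and applies the Galligo-type preparation theorem (Theorem \ref{ThmTaylorPrep}) to write $W(t,x)=\sum_{l=1}^r c_{\alpha_l}(t,x)\,t^{\alpha_l} X_{\alpha_l}$ with the normalized coefficients \eqref{EqnTaylorPrepGoodCoef}. One then sets $\nu=N+r$ and assigns to $X_{\alpha_l}$ the formal degree $d_l=(0,\hat d_l)\in\N^N\times\N^r$, where $\hat d_l$ is the $l$-th standard basis vector; now every $d_l$ is pure by construction. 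The coefficient normalization \eqref{EqnTaylorPrepGoodCoef} is exactly what verifies \eqref{EqnGenMaxCoefNonVanish} and \eqref{EqnGenMaxCoefVanish}, and the original discretized $\sM$ is dominated pointwise by this enlarged $(N+r)$-parameter maximal function $\sM_1$.

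Finally, the step you flag as the ``main obstacle'' --- promoting module-finiteness to the scale-uniform control of Definition \ref{DefnFiniteSetControl} --- is actually the easy part here. Theorem \ref{ThmFiniteGenControl} (a direct consequence of Noetherianity of $\sA_N$) produces coefficients $c_{(f,d)}$ with $d\leq e$ coordinatewise; multiplying the identity by $\delta^e$ gives $\delta^e Y=\sum_{d\leq e} (\delta^{e-d}c_{(f,d)})\,\delta^d X$ with $\delta^{e-d}$ uniformly bounded, so control is immediate (Corollary \ref{CorNoeFiniteType}, Lemma \ref{LemmaRealAnalAlwaysGeneratesFiniteList}). The genuinely new input is not uniform control but the preparation theorem that furnishes a finite generating set with pure degrees and the right coefficient structure.
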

Theorem \ref{ThmRealAnalMax} will follow from a more general maximal
theorem about $C^\infty$ $\gamma$ which is discussed in
Section \ref{SectionMoreGenMax}. This theorem will imply maximal
results for even stronger maximal functions than are covered by
Theorem \ref{ThmRealAnalMax}.

Fix $\nu$-parameter dilations $e=\q( e_1,\ldots, e_N\w)$ on $\R^N$,
as in Section \ref{SectionKernels} (so that $0\ne e_j\in \N^{\nu}$).
For a multi-index $\alpha=\q(\alpha_1,\ldots, \alpha_N\w)\in
\N^{N}$, define,
\begin{equation*}
\deg\q( \alpha\w) = \sum_{j=1}^N e_j \alpha_j\in \N^{\nu}.
\end{equation*}

\begin{defn}
We call $\alpha$ a {\it pure power} if $\deg\q( \alpha\w)$ is
nonzero in precisely one component.  Otherwise we call $\alpha$ a
{\it non-pure power}.
\end{defn}

Let $\gamma$ be as above.  As discussed in the introduction,
\cite{ChristNagelSteinWaingerSingularAndMaximalRadonTransforms}
showed that $\gamma$ could be written asymptotically as,
\begin{equation*}
\gamma_t\q(x\w) \sim \exp\q(\sum_{0<\q|\alpha\w|} t^{\alpha}
X_\alpha\w)x,
\end{equation*}
where the $X_\alpha$ are real analytic vector fields. Define two
sets,
\begin{equation}\label{EqnDefnsP}
\begin{split}
\sP &= \q\{\q(X_\alpha,\deg\q(\alpha\w)\w): \alpha \text{ is a pure power}\w\},\\
\sN &= \q\{\q(X_\alpha,\deg\q(\alpha\w)\w): \alpha \text{ is a
non-pure power}\w\}.
\end{split}
\end{equation}


\begin{thm}\label{ThmMainSingIntThm}
Suppose that for every $\q( X,d\w) \in \sN$, $\sL\q(\sP\w)$ controls $\q(
X,d\w)$ on a neighborhood of $0$.\footnote{The particular neighborhood used
in the definition of control may depend on $\q(X,d\w)$.}  
Then, there exists $a>0$ such
that for every $\psi_1,\psi_2\in C_0^\infty\q(\R^n \w)$ 
supported on a sufficiently small neighborhood of $0$,
every $K\in \sK\q( N,e,a,\nu\w)$, and
every $C^\infty$ function $\kappa\q( t,x\w)$,
 the operator given by
\begin{equation}\label{EqnSingIntThmOp}
T f\q( x\w) = \psi_1\q( x\w) \int f\q( \gamma_t\q( x\w) \w)
\psi_2\q( \gamma_t\q( x\w)\w) \kappa\q( t,x\w)K\q( t\w) \: dt
\end{equation}
is bounded $L^p\rightarrow L^p$ ($1<p<\infty$).
\end{thm}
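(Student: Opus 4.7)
The plan is to deduce the theorem from the main $L^p$ boundedness result of \cite{SteinStreetMultiParameterSingRadonLp} by showing that, in the real analytic setting, the (rather intricate) hypotheses of that $C^\infty$ theorem follow automatically from the simple control assumption that $\sL\q(\sP\w)$ controls $\sN$. The bridge between the two is a combination of Galligo's Weierstrass-type preparation theorem \cite{GalligoTheoremeDeDivisionEtStabiliteEnGeometrieAnalytiqueLocale} and the Nagano--Lobry principle, already invoked in Section~\ref{SectionRealAnal}, that an involutive distribution generated by real analytic vector fields is locally finitely generated as a $C^\infty$ module.

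First, I would write the asymptotic expansion $\gamma_t\q(x\w) \sim \exp\q(\sum_{\q|\alpha\w|>0} t^\alpha X_\alpha\w)x$ in terms of germs of real analytic vector fields $X_\alpha$, and partition the pairs $\q(X_\alpha,\deg\q(\alpha\w)\w)$ into $\sP$ and $\sN$ as in \eqref{EqnDefnsP}. The standing hypothesis then provides that $\sL\q(\sP\w)$ controls every element of $\sN$ on a neighborhood of $0$.

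Next, I would exploit real analyticity to upgrade $\sL\q(\sP\w)$ (which a priori is a possibly infinite set of brackets) to a \emph{finite} controlling list carrying uniform estimates. Galligo's preparation theorem, applied to the real analytic ideals generated by the components of the vector fields in $\sL\q(\sP\w)$ and their iterated brackets, produces a finite set of generators whose coefficients in the module decomposition have uniformly good divisibility. Combining this with Remark~\ref{RmkJacobi} and the Nagano--Lobry theorem yields a finite subcollection $\sF \subseteq \sL_0\q(\sP\w)$, paired with formal degrees, which controls every element of $\sL\q(\sP\w) \cup \sN$ in the sense of Definition~\ref{DefnFiniteSetControl}, and, crucially, with the uniform $C^m$ bounds on the coefficients $c_{x,j}^\delta$ (over every $\delta \in \q[0,1\w]^\nu$ and every $x$ in a neighborhood of $0$) that \cite{SteinStreetMultiParameterSingRadonLp} requires as a separate hypothesis in the $C^\infty$ framework.

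With this finite, uniformly controlling list $\sF$ in hand, the remainder of the argument is a direct invocation of the main theorem of \cite{SteinStreetMultiParameterSingRadonLp}: the multi-parameter Carnot--Carath\'eodory geometry associated to $\q(\sF,d\w)$ supplies the scaled balls $\B{X}{d}{x}{\delta}$, the almost-orthogonality estimates, and the multi-parameter Littlewood--Paley decomposition matched to the expansion $K = \sum_{j\in\N^\nu} \dil{\vsig_j}{2^j}$ of Definition~\ref{DefnsK}, which together yield $L^p$ boundedness of $T$ for $1<p<\infty$. The main obstacle I anticipate is the second step: packaging Galligo's preparation theorem in exactly the quantitative form needed to verify the control hypothesis of \cite{SteinStreetMultiParameterSingRadonLp} uniformly across all scales $\delta$ simultaneously, rather than merely leaf-by-leaf after an application of the real analytic Frobenius theorem. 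The $L^p$ machinery itself is entirely inherited from the previous paper; the role of real analyticity is solely to make the algebraic-geometric input available.
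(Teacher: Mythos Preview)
Your overall strategy is the paper's: reduce to Theorem~5.2 of \cite{SteinStreetMultiParameterSingRadonLp}, with real analyticity supplying the extra hypotheses that theorem requires in the $C^\infty$ setting. The tools you name---Galligo's preparation theorem and Noetherianity \`a la Nagano--Lobry---are exactly the ones used. But your deployment of Galligo is misdirected, and this is precisely the source of the obstacle you flag in your final paragraph.

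The paper decomposes the hypothesis of \cite{SteinStreetMultiParameterSingRadonLp} into an \emph{algebraic} part (condition~(\ref{ItemCondFinal}.A) in Section~\ref{SectionSSLp}), which is verbatim the hypothesis of Theorem~\ref{ThmMainSingIntThm}, and a \emph{finite-type} part (condition~(\ref{ItemCondFinal}.F)), which demands a finite $\sF\subseteq\{(X_\alpha,\deg(\alpha)):\alpha\in\N^N\}$ generating a finite list that controls the vector field $W(t,x)=\frac{d}{d\epsilon}\big|_{\epsilon=1}\gamma_{\epsilon t}\circ\gamma_t^{-1}(x)$ in the sense of Definition~\ref{DefnControlW}. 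The key move (Proposition~\ref{PropFiniteTypeAuto}) is to apply Galligo, packaged as Theorem~\ref{ThmTaylorPrep}, \emph{to $W$ itself}: this gives an exact finite identity $W(t,x)=\sum_{k=1}^r c_{\alpha_k}(t,x)\,t^{\alpha_k}X_{\alpha_k}(x)$ with real analytic coefficients $c_{\alpha_k}$. Control of $W$ by $\{(X_{\alpha_k},\deg(\alpha_k))\}$ is then immediate and uniform in all scales $\delta$, since the $c_{\alpha_k}(\delta t,x)$ are bounded in $C^\infty$ uniformly for $\delta\in[0,1]^\nu$ on a fixed neighborhood; no delicate quantitative packaging is needed. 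The Noetherian input (Corollary~\ref{CorNoeFiniteType}, Lemma~\ref{LemmaRealAnalAlwaysGeneratesFiniteList}) enters separately, only to show that any finite set of real analytic vector fields with formal degrees generates a finite list closed under control of commutators.

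Your step~2 instead aims Galligo at $\sL(\sP)$, producing a finite $\sF\subseteq\sL_0(\sP)$ that controls each individual $(X_\alpha,\deg(\alpha))$. This does not match condition~(\ref{ItemCondFinal}.F): that condition requires $\sF$ to be drawn from the $X_\alpha$ themselves, and more importantly to control $W$ as a function of $t$, not merely each Taylor coefficient separately. Summing control of the infinitely many $X_\alpha$ into control of $W$ is exactly the uniformity-across-scales problem you anticipated; the paper's route sidesteps it entirely by producing a finite \emph{exact} expression for $W$ at the outset.
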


\begin{rmk}
Note, by taking $\psi_2$ to be equal to $1$ on a neighborhood of the
support of $\psi_1$, taking $\kappa=1$, and taking $a>0$ so small
that for $t$ in the support of $K\q( t\w)$ and $x$ in the support of
$\psi_1\q( x\w)$ we have $\psi_2\q( \gamma_t\q( x\w)\w)=1$, we see
that the operator given by \eqref{EqnIntroOpForm} is of the form
discussed in Theorem \ref{ThmMainSingIntThm}.
\end{rmk}

Theorem \ref{ThmMainSingIntThm} will follow from a more general
theorem about $C^\infty$ $\gamma$, which is proven in
\cite{SteinStreetMultiParameterSingRadonLp}.

\begin{cor}\label{CorAllPure}
Suppose that $X_\alpha=0$ for every non-pure power $\alpha$.  Then
the operator given by \eqref{EqnSingIntThmOp} is bounded on $L^p$
($1<p<\infty$).
\end{cor}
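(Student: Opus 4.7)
The plan is to deduce Corollary \ref{CorAllPure} as an immediate special case of Theorem \ref{ThmMainSingIntThm}. All I need to do is verify that the hypothesis of the theorem, namely that $\sL(\sP)$ controls every pair $(X,d)\in \sN$ on a neighborhood of $0$, is automatic under the assumption that $X_\alpha = 0$ for every non-pure power $\alpha$.

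Indeed, by the very definition \eqref{EqnDefnsP} of $\sN$, under this assumption every pair in $\sN$ has the form $(0, d)$ for some $0\ne d\in \N^\nu$. So the hypothesis of Theorem \ref{ThmMainSingIntThm} collapses to the assertion that the \emph{zero} vector field, equipped with an arbitrary formal degree, is controlled by $\sL(\sP)$ on a neighborhood of $0$.

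This is a trivial instance of Definition \ref{DefnFiniteSetControl}. Pick any single pair $(Y,e_Y)\in \sL(\sP)$ as the finite subset $\sF$ (or, if $\sP$ is empty because all $X_\alpha$ vanish, take any auxiliary vector field; the situation is then vacuous anyway). Choose $\tau_1 > 0$ and $U$ arbitrary, and set $c_{x,j}^\delta \equiv 0$. The required identity
\begin{equation*}
\delta^{d_0} X_0 \;=\; \sum_{j} c_{x,j}^\delta\, \delta^{d_j} X_j
\end{equation*}
reduces to $0 = 0$, and every $C^0$ norm of every $(\delta X)^\alpha$-derivative of the coefficients is zero, hence finite. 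So control holds on every neighborhood of $0$ for every pair in $\sN$.

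With the hypothesis verified, Theorem \ref{ThmMainSingIntThm} applies and delivers the $L^p$ boundedness of the operator \eqref{EqnSingIntThmOp} for $1 < p < \infty$ on the nose. There is no genuine obstacle here: the content of the corollary is simply that vanishing of the non-pure-power vector fields makes the controllability condition vacuous, reducing the multi-parameter situation to one governed purely by the pure-power data $\sP$ (which, in the single-parameter setting, would comprise all vector fields and would recover the classical result).
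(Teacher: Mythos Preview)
Your proof is correct and follows exactly the same route as the paper's: both observe that under the hypothesis every element of $\sN$ is $(0,d)$, and that the zero vector field is trivially controlled (with identically zero coefficients), so Theorem \ref{ThmMainSingIntThm} applies. The parenthetical about $\sP$ possibly being empty is unnecessary, since $\sP$ always contains the pairs $(X_\alpha,\deg(\alpha))$ for pure $\alpha$ regardless of whether those $X_\alpha$ vanish; in any case the empty finite subset would already control $(0,d)$.
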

\begin{proof}
It follows immediately from the definitions that $\sL\q(\sP\w)$ controls
$\q(0,\deg\q( \alpha\w)\w)$ for every $\alpha$.  Thus the hypotheses
of Theorem \ref{ThmMainSingIntThm} hold trivially.
\end{proof}

\begin{cor}\label{CorSingleParamSingInt}
In the special case $\nu=1$ (i.e., the single-parameter case, when
$K\q( t\w)$ is a Calder\'on-Zygmund kernel), the operator given by
\eqref{EqnSingIntThmOp} is bounded on $L^p$ ($1<p<\infty$).
\end{cor}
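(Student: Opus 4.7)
The plan is to deduce this corollary directly from Theorem \ref{ThmMainSingIntThm} by observing that its hypothesis is vacuously satisfied when $\nu=1$. Concretely, I would first unpack the definition of a pure power in the single-parameter setting. Since $\nu=1$, for each multi-index $\alpha$ with $|\alpha|>0$ the degree
\begin{equation*}
\deg(\alpha) = \sum_{j=1}^N e_j\alpha_j \in \N
\end{equation*}
is a single nonnegative integer. Because each $e_j$ is a nonzero element of $\N$ (as specified in the setup of Section \ref{SectionKernels}), we have $\deg(\alpha)>0$ whenever $|\alpha|>0$, so $\deg(\alpha)$ is nonzero in precisely one component --- the only component. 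Thus every such $\alpha$ is a pure power.

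With that observation, the set $\sN$ defined in \eqref{EqnDefnsP} is empty, and the hypothesis of Theorem \ref{ThmMainSingIntThm} --- that $\sL(\sP)$ control each $(X,d)\in\sN$ near $0$ --- is vacuous. I would then invoke Theorem \ref{ThmMainSingIntThm} directly to conclude that $T$, as given in \eqref{EqnSingIntThmOp}, is bounded on $L^p$ for $1<p<\infty$, once $a>0$ and the supports of $\psi_1,\psi_2$ are taken sufficiently small.

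The only point that might require a brief comment is the identification of a Calder\'on-Zygmund kernel supported near $0$ with an element of $\sK(N,e,a,1)$; this is noted in the introduction (and discussed in \cite{StreetMultiParameterSingRadonLt}), so no real work is needed. In short, there is no substantive obstacle: once the single-parameter case is correctly parsed through the definitions of Section \ref{SectionKernels} and the pure/non-pure power dichotomy, the corollary reduces to a one-line application of Theorem \ref{ThmMainSingIntThm}, in exactly the same way that Corollary \ref{CorAllPure} does.
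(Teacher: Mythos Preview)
Your proof is correct and follows essentially the same approach as the paper: both observe that when $\nu=1$ every $\alpha$ is a pure power, so the control hypothesis is vacuous. The only cosmetic difference is that the paper routes this through Corollary~\ref{CorAllPure} rather than appealing to Theorem~\ref{ThmMainSingIntThm} directly, but the substance is identical.
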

\begin{proof}
In the single-parameter case, every $\alpha$ is a pure power.  Thus,
the hypotheses of Corollary \ref{CorAllPure} hold vacuously in this
case.
\end{proof}

\begin{prop}\label{PropClosedUnderComp}
Suppose that $T_1$ and $T_2$ are operators of the form covered in
Theorem \ref{ThmMainSingIntThm}:
\begin{equation*}
T_j f\q( x\w) = \psi_1^j\q( x\w) \int f\q( \gamma_{t_j}^j\q( x\w)
\w) \psi_2^j\q( \gamma_{t_j}^j\q( x\w)\w) \kappa_j\q( t,x\w)K_j\q(
t\w) \: dt_j,\quad j=1,2,
\end{equation*}
with $T_j$ satisfying all of the hypotheses of Theorem
\ref{ThmMainSingIntThm} (with perhaps different dilations $e$ for
$\gamma^1_{t_1}$ and $\gamma^2_{t_2}$).
 Then, $T_1T_2$ and $T_1^{*}$ satisfy the hypotheses Theorem \ref{ThmMainSingIntThm} (provided the $K_j$ and
$\psi_1^j,\psi_2^j$ have sufficiently small support).
\end{prop}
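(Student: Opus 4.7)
The plan is to rewrite both $T_1T_2$ and $T_1^*$ in the canonical form of Theorem~\ref{ThmMainSingIntThm} and then verify the control hypothesis on the new $\gamma$, using either the Baker--Campbell--Hausdorff (BCH) expansion or the formal inverse identity.

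\textbf{The adjoint.} Since $\gamma^1_{t_1}$ is real analytic with $\gamma^1_0=\mathrm{id}$, the inverse function theorem yields a real analytic inverse $\tilde\gamma_{t_1}(y):=(\gamma^1_{t_1})^{-1}(y)$ on a neighborhood of the origin, with $\tilde\gamma_0=\mathrm{id}$. Changing variables $y=\gamma^1_{t_1}(x)$ in $\langle T_1 f,g\rangle$ brings $T_1^*$ into the form
\begin{equation*}
T_1^*g(y)=\psi_2^1(y)\int g(\tilde\gamma_{t_1}(y))\,\psi_1^1(\tilde\gamma_{t_1}(y))\,\tilde\kappa_1(t_1,y)\,\overline{K_1(t_1)}\,dt_1,
\end{equation*}
where $\tilde\kappa_1$ absorbs $\overline{\kappa_1(t_1,\tilde\gamma_{t_1}(y))}$ and the Jacobian $|\det D_y\tilde\gamma_{t_1}(y)|$, both of which are $C^\infty$; the kernel $\overline{K_1}$ still lies in $\sK$. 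Writing $\tilde\gamma_t(y)\sim\exp(\sum_\alpha t^\alpha\tilde X_\alpha)y$ and equating coefficients in $\gamma^1_{t_1}\circ\tilde\gamma_{t_1}=\mathrm{id}$, one inductively identifies each $\tilde X_\alpha$ as a Lie polynomial in $\{X^1_\beta\}$ whose summands carry formal degrees adding to $\deg(\alpha)$. Because formal degrees have nonnegative integer components, any bracket contributing to a pure-power $\tilde X_\alpha$ can only involve pure-power $X^1_\beta$'s; hence the new pure set $\tilde\sP\subseteq\sL(\sP)$, while the new non-pure set $\tilde\sN\subseteq\sL(\sP\cup\sN)$. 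Combined with the hypothesis that $\sL(\sP)$ controls $\sN$, this transfers the control hypothesis to $\tilde\gamma$.

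\textbf{The composition.} Unfolding the iterated integral and setting $\gamma_t(x):=\gamma^2_{t_2}(\gamma^1_{t_1}(x))$ with $t=(t_1,t_2)$ gives
\begin{equation*}
(T_1T_2)f(x)=\psi_1^1(x)\int f(\gamma_t(x))\,\psi_2^2(\gamma_t(x))\,\kappa(t,x)\,K_1(t_1)K_2(t_2)\,dt,
\end{equation*}
with $\kappa$ absorbing the remaining smooth cutoffs. The new $\gamma$ is real analytic, and the tensor-product kernel $K_1\otimes K_2$ lies in $\sK(N_1+N_2,(e_1,e_2),a,\nu_1+\nu_2)$ under the concatenated dilation structure, since a product of atoms from \eqref{EqnSumDefK} is itself such an atom (with bounded $C^\infty$ seminorms and the required cancellation in each factor). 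By BCH,
\begin{equation*}
\gamma_t(x)\sim\exp\Bigl(\sum_\alpha t_1^\alpha X^1_\alpha+\sum_\beta t_2^\beta X^2_\beta+\sum_{\alpha,\beta}t_1^\alpha t_2^\beta Y_{\alpha,\beta}+\cdots\Bigr)x,
\end{equation*}
where every $Y_{\alpha,\beta}$ and each higher term is an iterated Lie bracket involving at least one $X^1$ and at least one $X^2$. Thus the new $\sP$ equals $\sP_1\cup\sP_2$, while the new $\sN$ equals $\sN_1\cup\sN_2$ together with these mixed bracket terms. For $(X,d)\in\sN_j$, the hypothesis on $T_j$ gives that $\sL(\sP_j)\subseteq\sL(\sP_1\cup\sP_2)$ controls $(X,d)$. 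For a mixed term, one substitutes the $\sL(\sP_j)$-controlled representations of its $\sN_j$-entries (valid on CC balls with uniformly bounded coefficients, by Section~5.3 of \cite{StreetMultiParameterCCBalls}) and applies the Jacobi identity together with Remark~\ref{RmkJacobi} to conclude that the mixed term is controlled by $\sL(\sP_1\cup\sP_2)$.

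\textbf{Main obstacle.} The hardest step is propagating genuine Carnot--Carath\'eodory control through the nested brackets produced by BCH, and dually through the inductive inversion identity. After substituting $\sL(\sP_j)$-controlled expressions for $\sN_j$-vector fields inside an iterated bracket, one must check that the resulting combinations still satisfy the quantitative bounds of Definition~\ref{DefnFiniteSetControl} uniformly in $\delta\in[0,1]^{\nu_1+\nu_2}$ on the CC balls of the composite list. The multi-parameter bookkeeping of formal degrees, whose nonnegative integer entries cannot cancel, is what guarantees that the pure/non-pure dichotomy is preserved under both composition and inversion; the remaining quantitative estimates follow along the lines of \cite{StreetMultiParameterCCBalls,SteinStreetMultiParameterSingRadonLp}.
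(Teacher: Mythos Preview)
Your treatment of the composition is essentially the paper's argument: BCH gives pure part $\sP_1\cup\sP_2$ and mixed terms lying in $\sL$ of the combined generators, and then one pushes control through brackets. What you call the ``main obstacle'' is in fact routine here---it is exactly Lemma~\ref{LemmaControlCommutator} (whose proof is one line from the definitions): once $\sL(\sP_1\cup\sP_2)$ controls every $(X^j_\alpha,\deg\alpha)$, it automatically controls every iterated bracket of these with the correct summed degree. No delicate uniform estimates beyond the definition of control are needed.

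Your adjoint argument, however, has a genuine gap and is also more complicated than necessary. The key fact you are missing is Lemma~9.3 of \cite{ChristNagelSteinWaingerSingularAndMaximalRadonTransforms}: if $\gamma_t\sim\exp(\sum t^\alpha X_\alpha)$ then $\gamma_t^{-1}\sim\exp(\sum t^\alpha(-X_\alpha))$, i.e.\ $\tilde X_\alpha=-X_\alpha$ exactly. With this, $\tilde\sP$ and $\tilde\sN$ differ from $\sP$ and $\sN$ only by signs, and (\ref{ItemCondFinal}.A) for $\tilde\gamma$ is immediate. Your inductive BCH argument instead yields only the inclusions $\tilde\sP\subseteq\sL(\sP)$ and $\tilde\sN\subseteq\sL(\sP\cup\sN)$. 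From these and the hypothesis you correctly deduce that $\sL(\sP)$ controls $\tilde\sN$; but the conclusion you need is that $\sL(\tilde\sP)$ controls $\tilde\sN$, and the inclusion $\tilde\sP\subseteq\sL(\sP)$ goes the wrong way for that (it makes $\sL(\tilde\sP)$ \emph{smaller}, not larger). You could close this by applying the same inversion argument to $\gamma=(\tilde\gamma)^{-1}$ to get the reverse inclusion $\sP\subseteq\sL(\tilde\sP)$, but the one-line citation of CNSW is far cleaner.
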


Of course, Proposition \ref{PropClosedUnderComp} does not
lead to any new $L^p$ boundedness results:  since $T_1$ and $T_2$
are bounded on $L^p$ ($1<p<\infty$) the same is true
for $T_1T_2$ (similarly for $T_1^{*}$).  
What Proposition \ref{PropClosedUnderComp} does tell us is that
our assumptions are robust enough that one cannot use our theorem,
plus algebraic manipulation of the operators in question, to create
new operators which are bounded on $L^p$, but to which our theorem
does not apply.
Proposition \ref{PropClosedUnderComp} is proved at the end of
Section \ref{SectionReductionSingInt}.

\section{Past work}
There are a number of papers concerning singular and maximal Radon
transforms.  We review in this section a few which are closely
related to our results.  The results mentioned here served as
motivation for the results in this paper.

One of the first results which comes to mind when considering
maximal Radon transforms associated to real analytic curves is the
following result of Bourgain
\cite{BourgainARemarkOnTheMaximalFunctionAssociatedToAnAnalyticVectorField}:
Theorem \ref{ThmRealAnalMax} holds in the special case when
$\gamma:\R_0\times \R_0^2\rightarrow \R^2$ is given by $\gamma_t\q(
x\w) = x+tv\q( x\w)$ and $v$ is a germ of a real analytic vector
field on $\R^2$.  In a manner completely analogous to this paper,
Bourgain proves a more general maximal theorem about $C^\infty$
curves.  This more general maximal theorem can be seen as a special
case of the maximal result in
\cite{SteinStreetMultiParameterSingRadonLp}.

The paper which served as the primary motivation for the methods in
\cite{StreetMultiParameterSingRadonLt,SteinStreetMultiParameterSingRadonLp}
was due to Christ, Nagel, Stein, and Wainger
\cite{ChristNagelSteinWaingerSingularAndMaximalRadonTransforms},
which discussed the single-parameter case (i.e., when $\nu=1$). As
discussed in \cite{StreetMultiParameterSingRadonLt}, the methods in
\cite{ChristNagelSteinWaingerSingularAndMaximalRadonTransforms} are
not sufficient to obtain Corollary \ref{CorSingleParamSingInt}.
Nevertheless, Christ, Nagel, Stein, and Wainger were able to obtain
a differentiation theorem for real analytic $\gamma$, see
Section 21 of
\cite{ChristNagelSteinWaingerSingularAndMaximalRadonTransforms}.
Namely, for any $f\in L^p$ ($1<p\leq\infty$) supported sufficiently
close to $0$,
\begin{equation*}
f\q( x\w) = \lim_{r\rightarrow 0} c_N^{-1} r^{-N} \int_{\q|t\w|\leq
r} f\q( \gamma_t\q( x\w) \w) \: dt, \quad \text{a.e.}
\end{equation*}
 As is well known, this follows from Theorem \ref{ThmRealAnalMax}.
In fact, it follows from the weaker result where one takes the
supremum over all $\delta_1=\delta_2=\cdots=\delta_N$.

In fact, the basic idea of the proof of the differentiation theorem
in \cite{ChristNagelSteinWaingerSingularAndMaximalRadonTransforms}
is closely related to the results in this paper.  Indeed, the result
follows by applying the Frobenius theorem to show that the ambient
space is foliated into leaves, and other results from
\cite{ChristNagelSteinWaingerSingularAndMaximalRadonTransforms}
could be applied to each leaf to obtain the differentiation theorem.
The main reason why our results are stronger than those in
\cite{ChristNagelSteinWaingerSingularAndMaximalRadonTransforms} in
the single-parameter case, is that we have access to a stronger form
of the Frobenius theorem:  the one developed in
\cite{StreetMultiParameterCCBalls}.

The last paper we wish to mention is due to Christ
\cite{ChristTheStrongMaximalFunctionOnANilpotentGroup}; in it, the
``strong maximal function associated to a nilpotent Lie group'' is
discussed.  Let $G$ be a connected, simply connected, nilpotent Lie
group.  Let $X_1,\ldots, X_N$ be left invariant vector fields on
$G$. Define $\gamma:\R^N\times G\rightarrow \R^N$ by
\begin{equation*}
\gamma_{t_1,\ldots, t_N}\q( x\w) = e^{t_1 X_1+\cdots + t_N X_N}x.
\end{equation*}
Note that we may choose coordinates so that $\gamma$ is real
analytic. Define a maximal function by,
\begin{equation*}
\sMh f\q( x\w) = \sup_{0<\delta_1,\ldots, \delta_N}
\int_{\q|t\w|\leq 1} \q|f\q( \gamma_{\delta_1 t_1,\ldots, \delta_N
t_N}\q( x\w) \w)\w|\: dt.
\end{equation*}
It is shown in
\cite{ChristTheStrongMaximalFunctionOnANilpotentGroup} that $\sMh$
is bounded on $L^p$ ($1<p\leq \infty$).  There are a few differences
between $\sMh$ and the maximal operator discussed in Theorem
\ref{ThmRealAnalMax}.  First, $\sMh$ does not involve a cutoff
function $\psi$; this is due to the translation invariance of
$\sMh$, and is not an essential point.  Second, the supremum in the
definition of $\sMh$ is over all $\delta_1,\ldots, \delta_N$, while
in Theorem \ref{ThmRealAnalMax} we restrict attention to
$\delta_1,\ldots, \delta_N$ small.  The reason the results in
\cite{ChristTheStrongMaximalFunctionOnANilpotentGroup} can be stated
for all $\delta$ is that they are lifted to a setting where there
exist global dilations so that the result for all $\delta$ follows
from the result for small $\delta$; and so this is not an essential
point either.  Thus, the $L^p$ boundedness of $\sMh$ is essentially
a special case of Theorem \ref{ThmRealAnalMax}.

In fact, \cite{ChristTheStrongMaximalFunctionOnANilpotentGroup}
studies even stronger maximal functions than $\sMh$.  While these
are not a special case of Theorem \ref{ThmRealAnalMax}, they are a
special case of the maximal function discussed in Section
\ref{SectionMoreGenMax}.  Thus, the results in
\cite{ChristTheStrongMaximalFunctionOnANilpotentGroup} are a special
case of the results in this paper.  Moreover, the methods in
\cite{ChristTheStrongMaximalFunctionOnANilpotentGroup} provided the
main motivation for the results in Section \ref{SectionMoreGenMax}.

\section{A more general maximal function}\label{SectionMoreGenMax}
In this section, we introduce a stronger maximal theorem.  In
Section \ref{SectionReductionMaxInt}, we will show that this maximal
theorem implies Theorem \ref{ThmRealAnalMax}.

Before we introduce this maximal theorem, we must explain the
connection between germs of $C^\infty$ functions satisfying
$\gamma_0\q( x\w) \equiv x$, and certain vector fields.  Indeed,
given a germ of a $C^{\infty}$ function $\gamma_t\q( x\w)$, defined on
a neighborhood of $\q(0,0\w) \in \R^N\times \R^n$, and satisfying
$\gamma_0\q( x\w)\equiv x$, it makes sense to consider
$\gamma_t^{-1}\q( x\w)$, since for $t$ sufficiently small,
$\gamma_t$ is a diffeomorphism onto its image.

Thus, we may define,
\begin{equation*}
W\q( t,x\w) = \frac{d}{d\epsilon}\bigg|_{\epsilon=1}
\gamma_{\epsilon t}\circ \gamma_{t}^{-1}\q( x\w) \in T_x\R^n.
\end{equation*}
Note that $W\q( t\w)$ is vector field, depending smoothly on $t$
such that $W\q( 0\w) \equiv 0$.

\begin{prop}[Proposition 12.1 of
\cite{StreetMultiParameterSingRadonLt}]\label{PropBijectGammaW}
 The map $\gamma\mapsto W$ is
a bijection between germs of $C^{\infty}$ functions, as above, to germs
of vector fields, $W\q( t\w)$, depending smoothly on $t$ and
satisfying $W\q( 0\w)\equiv 0$.
\end{prop}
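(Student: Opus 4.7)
The approach is to reinterpret $W$ as encoding the right-hand side of an ODE whose solution at ``time one'' recovers $\gamma$, and to use Hadamard's lemma to handle an apparent singularity in that ODE.

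\textbf{Deriving the ODE.} Fix $x_0$ and $t$, and set $y(\epsilon) := \gamma_{\epsilon t}(x_0)$ for $\epsilon \in \q[0,1\w]$. For $\sigma \in \q(0,1\w]$, substituting $x = \gamma_{\sigma t}(x_0) = y(\sigma)$ into the definition of $W$ (so that $\gamma_{\sigma t}^{-1}\circ \gamma_{\sigma t}$ cancels) yields
$$W\q(\sigma t, y(\sigma)\w) \;=\; \frac{d}{d\eta}\bigg|_{\eta=1} \gamma_{\eta \sigma t}(x_0) \;=\; \sigma\, y'(\sigma),$$
the last equality by the chain rule applied to $\eta \mapsto y(\eta \sigma)$. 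Thus $y$ satisfies the singular ODE $\epsilon\, y'(\epsilon) = W(\epsilon t, y(\epsilon))$ with $y(0) = x_0$. Since $W(0,\cdot) \equiv 0$, Hadamard's lemma provides smooth vector-valued $W_j$ with $W(s,x) = \sum_{j=1}^N s_j\, W_j(s,x)$, converting this into the entirely regular system
$$y'(\epsilon) \;=\; \sum_{j=1}^N t_j\, W_j\q(\epsilon t, y(\epsilon)\w),\qquad y(0) = x_0,$$
whose unique solution depends smoothly on $(\epsilon, t, x_0)$ in a neighborhood of the origin.

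\textbf{Injectivity and surjectivity.} If two germs $\gamma^1, \gamma^2$ produce the same $W$, then the curves $\epsilon \mapsto \gamma^i_{\epsilon t}(x_0)$ solve the same smooth initial value problem, hence coincide; evaluating at $\epsilon = 1$ gives $\gamma^1 = \gamma^2$. Conversely, given any germ of a vector field $W(t,x)$ smooth in $t$ with $W(0)\equiv 0$, I apply Hadamard's lemma, solve the regular ODE above, and define $\gamma_t(x_0) := y(1;\, t, x_0)$. The condition $\gamma_0(x_0) = x_0$ holds automatically because the right-hand side of the ODE vanishes when $t = 0$, forcing $y \equiv x_0$ in that case. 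Finally, reversing the chain-rule computation at $\epsilon = 1$ shows that the vector field associated to this constructed $\gamma$ equals the original $W$.

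\textbf{Main obstacle.} The only delicate point is the $\epsilon^{-1}$ singularity at $\epsilon = 0$ in the naive form $y'(\epsilon) = \epsilon^{-1} W(\epsilon t, y(\epsilon))$. This is resolved exactly by the hypothesis $W(0) \equiv 0$ via Hadamard's lemma. That hypothesis is also necessary, since differentiating the identity $\gamma_0 \circ \gamma_0^{-1}(x) \equiv x$ in the definition of $W$ at $t = 0$ forces $W(0,\cdot) \equiv 0$. Thus the vanishing condition both characterizes the image of $\gamma \mapsto W$ and supplies the exact ingredient needed to invert it; everything else reduces to standard smooth ODE theory on a sufficiently small neighborhood of the origin.
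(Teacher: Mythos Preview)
Your proof is correct and follows essentially the same route as the paper's sketch: construct the inverse of $\gamma\mapsto W$ by solving the ODE $\epsilon\,y'(\epsilon)=W(\epsilon t,y(\epsilon))$, $y(0)=x_0$, and evaluating at $\epsilon=1$. Your use of Hadamard's lemma to regularize the $\epsilon^{-1}$ singularity is a nice explicit alternative to the paper's appeal to the contraction mapping principle, but the underlying strategy is identical.
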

\begin{proof}[Proof sketch]
The inverse of the map $\gamma\mapsto W$ is as follows.  Given $W$,
let $\omega\q( \epsilon, t,x\w)$ be the unique solution to the ODE:
\begin{equation*}
\frac{d}{d\epsilon}\omega\q( \epsilon,t,x\w) = \frac{1}{\epsilon}
W\q( \epsilon t, \omega\q( \epsilon,t,x\w)\w), \quad \omega\q(
0,t,x\w) = x.
\end{equation*}
Define $\gamma_t\q( x\w) = \omega\q( 1,t,x\w)$.\footnote{It is easy
to see, via the contraction mapping principle, that the solution
$\omega$ exists up to $\epsilon=1$ for $t$ sufficiently small.} This
map $W\mapsto \gamma$ is the two-sided inverse to the map
$\gamma\mapsto W$.  See Proposition 12.1 of
\cite{StreetMultiParameterSingRadonLt} for details.
\end{proof}

In light of Proposition \ref{PropBijectGammaW}, instead of defining
$\gamma$, we may instead define $W$.  This will allow us to
introduce dilations on $\gamma_t$ that are not of the form
$\gamma_{\q(\delta_1 t_1,\ldots, \delta_N t_N\w)}$, thereby allowing
us to introduce stronger maximal functions than are covered in
Theorem \ref{ThmRealAnalMax}. A similar idea was used in
\cite{ChristTheStrongMaximalFunctionOnANilpotentGroup}, though the
setting was simpler and the vector field $W$ did not need to be
introduced.

We now turn to defining the maximal function.  Let $\q( X,d\w) =\q(
X_1,d_1\w),\ldots, \q( X_q,d_q\w)$ be germs of $C^{\infty}$ vector
fields defined on a neighborhood of $0\in \R^n$, each with an
associated formal degree $0\ne d_j\in \N^\nu$.  We suppose that
$\q(\q[X_j,X_k\w], d_j+d_k\w)$ is controlled by $\q(X,d\w)$ on a
neighborhood of $0$, for every $1\leq j,k\leq q$.  Let $1\leq r\leq
q$, and suppose each $d_j$, $1\leq j\leq r$ is nonzero in only one
component.  Suppose further that $\q( X_1,d_1\w),\ldots, \q(
X_r,d_r\w)$ generate $\q( X_{r+1}, d_{r+1}\w),\ldots, \q(
X_q,d_q\w)$ in the sense that 
$\q(X_{r+1},d_{r+1}\w),\ldots, \q(X_q,d_q\w)\in \sL_0\q( \q\{\q(X_1,d_1\w),\ldots, \q(X_r,d_r\w)\w\}\w)$.

We suppose we are given $\nu$-parameter dilations $e=\q( e_1,\ldots,
e_N\w)$ on $\R^N$, as in Section \ref{SectionKernels}; thus it makes
sense to write $\delta t$ for $t\in \R^N$ and $\delta\in
\q[0,1\w]^\nu$ (see \eqref{EqnDefndeltat}).

We suppose we are given germs of $C^\infty$ functions,
\begin{equation*}
c_j\q( t,s,x\w):\R_0^N\times \R_0^N\times \R_0^n\rightarrow \R,\quad
j=1,\ldots, q,
\end{equation*}
with $c_j\q(0,0,x\w) \equiv 0$. Suppose $0\ne\alpha_1,\ldots,
\alpha_r\in \N^{\nu}$ be multi-indices such that,
\begin{equation}\label{EqnGenMaxCoefNonVanish}
\frac{1}{\alpha_l!} \frac{\partial}{\partial s}^{\alpha_l} c_l\q(
t,s,x\w)\bigg|_{t=s=0} =1,\quad 1\leq l\leq r,
\end{equation}
and for all $1\leq l\leq r$, $1\leq k\leq q$ and all
$\beta_1,\beta_2$ with $\beta_1+\beta_2=\alpha_l$,
\begin{equation}\label{EqnGenMaxCoefVanish}
\frac{\partial}{\partial t}^{\beta_1} \frac{\partial}{\partial
s}^{\beta_2} c_k\q( t,s,x\w)\bigg|_{t=s=0} =0, \quad \text{unless
}l=k, \beta_1=0, \beta_2=\alpha_l.
\end{equation}

Let $\Ninf=\N\cup \q\{\infty\w\}$.  For each $j\in \Ninf^\nu$,
define,
\begin{equation*}
W_j\q( t,x\w)=\sum_{l=1}^q c_l\q( 2^{-j} t,t,x\w) 2^{-j\cdot d_l}
X_l.
\end{equation*}
Given $W_j$ we obtain a corresponding $\gamma^j_t$ as in
Proposition \ref{PropBijectGammaW}.  That is, let $\omega_j$ be the
unique solution to the ODE:
\begin{equation*}
\frac{d}{d\epsilon}\omega_j\q( \epsilon,t,x\w) = \frac{1}{\epsilon}
W_j\q( \epsilon t, \omega_j\q( \epsilon,t,x\w)\w), \quad \omega_j\q(
0,t,x\w) = x.
\end{equation*}
Set $\gamma^j_t\q( x\w) = \omega_j\q( 1,t,x\w)$.  It is easy to see,
via the contraction mapping principle, that there are open sets
$0\in U\subset \R^N$, $0\in V\subset \R^n$, independent of $j$, such
that $\gamma_t^j: U\times V\rightarrow \R^n$.

Let $\psi_1,\psi_2\in C_0^{\infty}\q( \R^n\w)$ be supported on a
small neighborhood of $0$, $\psi_1,\psi_2\geq 0$, and let $a>0$ be a
small number.  In light of the above remarks, it makes sense to
define the maximal function,
\begin{equation*}
\sMt f\q( x\w) = \sup_{j\in \N^{\nu}}\psi_1\q( x\w) \int_{\q|t\w|<a}
\q|f\q( \gamma_t^j\q( x\w) \w)\w| \psi_2\q( \gamma_t^j\q( x\w)\w)\:
dt.
\end{equation*}

\begin{thm}\label{ThmMoreGenMaxFunc}
Under the above conditions $\sMt$ is bounded on $L^p$ ($1<p\leq
\infty$), provided $\psi_1$ and $\psi_2$ are supported on a
sufficiently small neighborhood of $0$, and $a>0$ is sufficiently
small.
\end{thm}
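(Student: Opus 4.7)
The plan is to dominate $\sMt$ pointwise by a composition of single-parameter maximal operators, one for each of the $\nu$ parameter directions, and then to iterate the single-parameter Carnot-Carath\'eodory maximal theorem. This strategy mirrors the one used in \cite{ChristTheStrongMaximalFunctionOnANilpotentGroup} for nilpotent Lie groups, adapted to the Carnot-Carath\'eodory framework of \cite{StreetMultiParameterCCBalls}, and its overall shape parallels the proof of Theorem \ref{ThmMainSingIntThm} given in \cite{SteinStreetMultiParameterSingRadonLp}.

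First I would use the vanishing conditions \eqref{EqnGenMaxCoefNonVanish} and \eqref{EqnGenMaxCoefVanish} to pin down the leading behavior of $W_j\q(t,x\w)$. The condition on $c_l$ for $1\leq l\leq r$ guarantees that the pure summand $c_l\q(2^{-j}t,t,x\w)\,2^{-j\cdot d_l}X_l$ reduces to $t^{\alpha_l}\,2^{-j\cdot d_l}X_l$ plus errors that either gain an extra factor of $2^{-j}$ or are of higher order in $t$. The hypothesis $\q(X_{r+1},d_{r+1}\w),\ldots,\q(X_q,d_q\w)\in\sL_0\q(\q\{\q(X_1,d_1\w),\ldots,\q(X_r,d_r\w)\w\}\w)$ lets each non-pure summand ($k>r$) be expressed as an iterated commutator of the pure vector fields, so that on the Carnot-Carath\'eodory ball $\B{X}{d}{x}{2^{-j}}$ this contributes only higher-order corrections controlled by the same geometry together with the bracket-control assumption.

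Next I would express the flow $\gamma_t^j\q(x\w)$ as an approximate composition of single-parameter flows. For each $\mu\in\q\{1,\ldots,\nu\w\}$, let $I_\mu$ be the set of pure indices $l\leq r$ for which $d_l$ is supported in component $\mu$, and let $\gamma^{j,\mu}_t$ denote the flow of $\sum_{l\in I_\mu} t^{\alpha_l}\,2^{-j\cdot d_l}X_l$. Using the quantitative Frobenius theorem from \cite{StreetMultiParameterCCBalls}, one shows that $\gamma_t^j$ is a controlled perturbation of the composition $\gamma^{j,1}_t\circ\cdots\circ\gamma^{j,\nu}_t$, with the perturbation remaining inside $\B{X}{d}{x}{C\,2^{-j}}$. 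Averaging, taking the supremum over $j$, and denoting by $\sM^{(\mu)}$ the single-parameter maximal operator associated to the vector fields indexed by $I_\mu$, one obtains a pointwise bound of the form
\begin{equation*}
\sMt f\q(x\w) \;\lesssim\; \sM^{(1)}\sM^{(2)}\cdots\sM^{(\nu)}\q(|f|\w)\q(x\w).
\end{equation*}
Each $\sM^{(\mu)}$ is a single-parameter Carnot-Carath\'eodory maximal operator, bounded on $L^p$ ($1<p\leq\infty$) by the single-parameter theory of \cite{ChristNagelSteinWaingerSingularAndMaximalRadonTransforms} as refined in \cite{StreetMultiParameterCCBalls}; iterating gives the theorem.

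The main obstacle is establishing the factorization of $\gamma_t^j$ into single-parameter pieces uniformly in $j$. The $X_l$ do not commute and the non-pure coefficients $c_k$ are generally nonzero, so only an approximate factorization is available, and one must verify that all perturbation errors remain inside Carnot-Carath\'eodory balls of the correct multi-radius, uniformly as $j$ varies. Carrying this out requires the full strength of the multi-parameter Frobenius machinery of \cite{StreetMultiParameterCCBalls} together with the bookkeeping developed in \cite{StreetMultiParameterSingRadonLt,SteinStreetMultiParameterSingRadonLp}, which is why the authors note that the overall outline of the proof is ``quite similar'' to that of the singular integral case.
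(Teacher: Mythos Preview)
Your approach has a genuine gap, and it also mischaracterizes \cite{ChristTheStrongMaximalFunctionOnANilpotentGroup}: Christ does \emph{not} prove his result by a pointwise iteration of single-parameter maximal operators; that such a bound fails is exactly the difficulty. The circularity in your sketch is this: even granting an approximate factorization $\gamma_t^j \approx \gamma_t^{j,1}\circ\cdots\circ\gamma_t^{j,\nu}$ with error confined to $\B{X}{d}{x}{C\,2^{-j}}$, that error lives in a \emph{multi}-parameter ball (the commutator and non-pure terms carry mixed degrees, not degrees in any single $I_\mu$), so dominating the resulting average is precisely the multi-parameter maximal estimate you are trying to prove. There is no way to absorb these cross terms into one-parameter pieces uniformly in $j$. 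A further sign that no pointwise domination of this kind can hold is the remark following the statement of Theorem~\ref{ThmMoreGenMaxFunc}: the non-discretized version of $\sMt$ need not be bounded on all $L^p$, $p>1$, whereas a bound $\sMt f\lesssim \sM^{(1)}\cdots\sM^{(\nu)}|f|$ would pass directly to the continuous supremum.

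The paper's proof (Section~\ref{SectionProofOfMax}) is entirely different and follows the $L^2$/Littlewood--Paley route of \cite{ChristTheStrongMaximalFunctionOnANilpotentGroup} as adapted in \cite{SteinStreetMultiParameterSingRadonLp}. One proceeds by induction on $\nu$: with $M_j$ the basic averaging operator and $A_j$, $D_j$ the auxiliary operators from \cite{SteinStreetMultiParameterSingRadonLp}, set $B_j=\sum_{E\subseteq\nuset}(-1)^{|E|}A_{j_{E^c}}M_{j_E}$. The inductive hypothesis handles $\sup_j|M_{j_E}f|$ for every $E\subsetneq\nuset$, reducing matters to $\sup_j|B_jf|$, which in turn reduces to the almost-orthogonality estimate $\LpOpN{2}{B_jD_k}\lesssim 2^{-\epsilon|j-k|}$. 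The vanishing conditions \eqref{EqnGenMaxCoefNonVanish}--\eqref{EqnGenMaxCoefVanish} enter only here, via \eqref{EqnXjAsTaylor}, to provide the cancellation needed for this $L^2$ bound; they are not used to produce any factorization of the flow.
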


Theorem \ref{ThmMoreGenMaxFunc} is proved in Section
\ref{SectionProofOfMax}.

\begin{rmk}
We will see that Theorem \ref{ThmRealAnalMax} follows from Theorem
\ref{ThmMoreGenMaxFunc}.  It is not hard to see that Theorem 2.4 of
\cite{ChristTheStrongMaximalFunctionOnANilpotentGroup} follows from
Theorem \ref{ThmMoreGenMaxFunc}.  It follows that all of the results
of \cite{ChristTheStrongMaximalFunctionOnANilpotentGroup} can be
reduced to Theorem \ref{ThmMoreGenMaxFunc}.
\end{rmk}

\begin{rmk}
Notice that we have discretized our maximal functions; i.e.,
we only consider dyadic scales.  This is
essential when considering $\sMt$.  Indeed, the obvious non-discretized
version need not be bounded on all $L^p$, $p>1$.
This was noted on the top of page 5 of \cite{ChristTheStrongMaximalFunctionOnANilpotentGroup}.
\end{rmk}

\section{When $\gamma$ is $C^\infty$: the results of \cite{SteinStreetMultiParameterSingRadonLp}}\label{SectionSSLp}
In this section, we review the results of \cite{SteinStreetMultiParameterSingRadonLp}.
We will see that Theorem \ref{ThmMainSingIntThm} is, in fact,
a special case of Theorem 5.2 of \cite{SteinStreetMultiParameterSingRadonLp},
which we review below.  In addition, we rephrase the assumptions
of \cite{SteinStreetMultiParameterSingRadonLp} in a few different ways,
which will be useful in what follows.

The setting is as follows.  We are given a $C^\infty$ function
$\gamma_t\q(x\w)=\gamma\q(t,x\w):\R^N_0\times \R^n_0\rightarrow \R^n$
satisfying $\gamma_0\q(x\w)\equiv x$.
The goal is to give conditions on $\gamma$ such that the operator given by\footnote{Or more generally, operators of the form covered in Theorem \ref{ThmMainSingIntThm}.}
\begin{equation}\label{EqnToStateBoundedLp}
Tf\q(x\w) = \psi\q(x\w)\int f\q(\gamma_t\q(x\w)\w) K\q(t\w)\: dt,
\end{equation}
is bounded on $L^p$ ($1<p<\infty$) for every $K\in \sK\q(N,e,a,\nu\w)$
where $a$ is sufficiently small and $\psi$ is supported on a sufficiently
small neighborhood of $0$.  We think of the $\nu$-parameter
dilations $e$ as fixed so that it makes sense to write
$\delta t$ for $\delta\in \q[0,\infty\w)^\nu$ and $t\in \R^N$
as in \eqref{EqnDefndeltat}.

\begin{defn}\label{DefnControlW}
Let $\q(X,d\w)=\q(X_1,d_1\w),\ldots, \q(X_q,d_q\w)$ be a finite list of 
$C^\infty$ vector fields with $\nu$-parameter formal degrees
$0\ne d_j\in \q[0,\infty\w)^\nu$ as in Section \ref{SectionCC}.
Let $W\q(t,x\w)\in T_x\R^n$ be a smooth vector field (defined on a neighborhood
of $\q(0,0\w)\in \R^N\times \R^n$), depending
smoothly on $t\in \R^N_0$.  We say that $\q(X,d\w)$ {\it controls} $W$
on a neighborhood of $0$ if there exists an open neighborhood $U$ of $0\in \R^n$, $\tau_1>0$, and $\rho_1>0$, such that for every $\delta\in \q[0,1\w]^\nu$, $x_0\in U$,
there exist functions $c_l^{x_0,\delta}$ on $B^N\q(\rho_1\w)\times \B{X}{d}{x_0}{\tau_1\delta}$ satisfying
\begin{itemize}
\item $W\q(\delta t, x\w) = \sum_{l=1}^q c_l^{x_0,\delta}\q(t,x\w) \delta^{d_l} X_l\q(x\w)$ on $B^N\q(\rho_0\w)\times \B{X}{d}{x_0}{\tau_1\delta}$.
\item $\sup_{\substack{x_0\in U\\ \delta\in \q[0,1\w]^\nu}} \sum_{\q|\alpha\w|+\q|\beta\w|\leq m} \CjN{\q(\delta X\w)^\alpha \partial_t^\beta c_l^{x_0,\delta}}{0}{B^N\q(\rho_1\w)\times \B{X}{d}{x_0}{\tau_1\delta}}<\infty$, for every $m$.
\end{itemize}
If, instead, $\sS$ is an infinite collection of vector fields,
then we say $\sS$ {\it controls} $W$ on a neighborhood of $0$ if there is a finite subset
which controls $W$.
\end{defn}

\begin{rmk}\label{RmkContrlWControlX}
Definition \ref{DefnControlW} is closely related
to Definition \ref{DefnFiniteSetControl}.  Indeed,
note that if $\q(X,d\w)$ controls $W$ on a neighborhood of $0$, and if the Taylor series
for $W$ is given by,
\begin{equation*}
W\q(t,x\w)\sim \sum_{\alpha} t^{\alpha} Y_\alpha,
\end{equation*}
then $\q(X,d\w)$ controls $\q(Y_\alpha, \deg\q(\alpha\w)\w)$ on a neighborhood
of $0$ for every
$\alpha$.
\end{rmk}

\begin{defn}\label{DefnGeneratesAFiniteList}
Given a finite list of $C^\infty$ vector fields (defined on a neighborhood of $0\in \R^n$) with $\nu$-parameter formal
degrees $\q(X_1,d_1\w),\ldots, \q(X_r,d_r\w)$ we say that
this list {\it generates the finite list} $\q(X,d\w)=\q(X_1,d_1\w),\ldots, \q(X_q,d_q\w)$ (here $q\geq r$) if there exist vector fields with formal 
degrees $\q(X_{r+1},d_{r+1}\w),\ldots, \q(X_q,d_q\w)\in \sL_0\q(\q\{\q(X_1,d_1\w),\ldots, \q(X_r,d_r\w)\w\}\w)$ such that 
for every $1\leq j,k\leq q$, $\q(\q[X_j,X_k\w],d_j+d_k\w)$ is controlled by $\q(X,d\w)$ on a neighborhood of $0$.
\end{defn}

\begin{rmk}
In what follows, we will say $A$ {\it controls} $B$ to mean
$A$ controls $B$ on a neighborhood of $0$.
\end{rmk}

With the above definitions in hand, we are prepared to state the assumptions
placed on $\gamma$ in \cite{SteinStreetMultiParameterSingRadonLp}.
We state these assumptions in three different ways, which we will
see are all equivalent.  Under any of the following assumptions,
the operator given by \eqref{EqnToStateBoundedLp} is bounded on
$L^p$ ($1<p<\infty$).
In what follows, define the vector field,
\begin{equation*}
W\q(t,x\w) = \frac{d}{d\epsilon}\bigg|_{\epsilon=1} \gamma_{\epsilon t}\circ \gamma_t^{-1}\q(x\w)\in T_x\R^n.
\end{equation*}
Note that $\gamma_t^{-1}$ makes sense, since for $t$ sufficiently small,
$\gamma_t$ is a diffeomorhpism onto its image (because $\gamma_0\q(x\w)\equiv x$).

\begin{enumerate}[(I)]
\item\label{ItemSSCond} Expand $W$ as a Taylor series in the $t$ variable,
\begin{equation*}
W\q(t\w)\sim \sum_{\q|\alpha\w|>0} t^{\alpha} \Xh_\alpha,
\end{equation*}
where the $\Xh_\alpha$ are $C^\infty$ vector fields.
We assume that there is a finite subset,
\begin{equation*}
\sF\subseteq \q\{\q(\Xh_\alpha, \deg\q(\alpha\w)\w) : \alpha\text{ is a pure power}\w\},
\end{equation*}
such that $\sF$ generates a finite list $\q(\Xh,d\w)$ and $\q(\Xh,d\w)$
controls $W$.

\item\label{ItemCondWithW} For the second equivalent condition, we rephrase (\ref{ItemSSCond}) as having
two distinct parts:
\begin{itemize}
\item[(\ref{ItemCondWithW}.F)]\label{ItemCondWithWa} A ``finite type'' condition:  taking $\Xh_{\alpha}$ as in (\ref{ItemSSCond}), we assume that there is a finite subset,
\begin{equation*}
\sF\subseteq \q\{\q(\Xh_\alpha,\deg\q(\alpha\w)\w): \alpha\in \N^N \w\},
\end{equation*}
such that $\sF$ generates a finite list $\q(\Xh,d\w)$ and this finite list controls $W$.
\item[(\ref{ItemCondWithW}.A)]\label{ItemCondWithWb} An ``algebraic'' condition:  we assume that for every non-pure power
$\alpha$, $\q(\Xh_\alpha, \deg\q(\alpha\w)\w)$ is controlled by
$$\sL\q(\q\{\q(\Xh_\alpha, \deg\q(\alpha\w)\w):\alpha\text{ is a pure power}\w\}\w).$$
\end{itemize}

\item\label{ItemCondFinal}  The third equivalent condition is the same as (\ref{ItemCondWithW}), except we use different vector fields.  Indeed, write,
\begin{equation*}
\gamma_t\q(x\w)\sim\exp\q(\sum_{\alpha} t^{\alpha} X_\alpha\w)x.
\end{equation*}
\begin{itemize}
\item[(\ref{ItemCondFinal}.F)] A ``finite type'' condition:  we assume there is a finite subset,
\begin{equation*}
\sF\subseteq \q\{\q(X_\alpha, \deg\q(\alpha\w)\w):\alpha\in \N^\nu\w\},
\end{equation*}
such that $\sF$ generates a finite list $\q(X,d\w)$ and this finite list
controls $W$.
\item[(\ref{ItemCondFinal}.A)] An ``algebraic'' condition:  we assume that for every non-pure power
$\alpha$, $\q( X_\alpha, \deg\q(\alpha\w)\w)$ is controlled by
$$\sL\q(\q\{\q(X_\alpha, \deg\q(\alpha\w)\w):\alpha\text{ is a pure power}\w\}\w).$$
\end{itemize}
\end{enumerate}

\begin{rmk}
Note that the assumptions of Theorem \ref{ThmMainSingIntThm}
are exactly that (\ref{ItemCondFinal}.A) holds.
\end{rmk}

\begin{rmk}
The vector fields $X_\alpha$ and $\Xh_\alpha$ are closely related.
See Lemma \ref{LemmaCampbellHausdoffSpan}.
\end{rmk}

\begin{thm}\label{ThmEquivConds}
(\ref{ItemSSCond})$\Leftrightarrow$(\ref{ItemCondWithW})$\Leftrightarrow$(\ref{ItemCondFinal}); i.e., the above three conditions are equivalent.
\end{thm}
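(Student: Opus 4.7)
The plan is to show $(\ref{ItemSSCond}) \Leftrightarrow (\ref{ItemCondWithW})$ directly, and then to reduce $(\ref{ItemCondWithW}) \Leftrightarrow (\ref{ItemCondFinal})$ to a known span-equality between the Taylor coefficients $\Xh_\alpha$ of $W$ and the exponential coefficients $X_\alpha$ of $\gamma$. The key technical ingredients are Remark \ref{RmkContrlWControlX} (control of $W$ passes to control of each Taylor coefficient), the Jacobi identity observation (Remark \ref{RmkJacobi}) that elements of $\sL(\sS)$ can be written as spans of elements of $\sL_0(\sS)$ of the same formal degree, and Lemma \ref{LemmaCampbellHausdoffSpan} (the Campbell--Hausdorff comparison of $X_\alpha$ and $\Xh_\alpha$).

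First I would show $(\ref{ItemSSCond}) \Rightarrow (\ref{ItemCondWithW})$. Part (\ref{ItemCondWithW}.F) is immediate since the same $\sF$ and $\q(\Xh,d\w)$ work. For (\ref{ItemCondWithW}.A), given a non-pure power $\alpha$, apply Remark \ref{RmkContrlWControlX} to conclude that $\q(\Xh,d\w)$ controls $\q(\Xh_\alpha,\deg\q(\alpha\w)\w)$. Since $\sF$ consists of pure powers and $\q(\Xh,d\w)\subseteq \sL_0\q(\sF\w)\subseteq \sL\q(\sF\w)$, the set $\sL$ of pure powers contains (as formal-degree-preserving spans) every element of $\q(\Xh,d\w)$, so it controls $\Xh_\alpha$.

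The substantive direction is $(\ref{ItemCondWithW}) \Rightarrow (\ref{ItemSSCond})$. Write $\sF=\sF_{\mathrm{pure}}\sqcup \sF_{\mathrm{non}}$ from (\ref{ItemCondWithW}.F). For each $\q(\Xh_\alpha,\deg\q(\alpha\w)\w)\in \sF_{\mathrm{non}}$, assumption (\ref{ItemCondWithW}.A) furnishes a finite collection $\sG_\alpha\subseteq \sL\q(\sP^{\Xh}\w)$, where $\sP^{\Xh}$ denotes the pure-power $\Xh_\beta$'s, that controls $\q(\Xh_\alpha,\deg\q(\alpha\w)\w)$; by the Jacobi identity (Remark \ref{RmkJacobi}) each element of $\sG_\alpha$ lies in the span (of matching formal degree) of elements of $\sL_0$ of finitely many pure powers. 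Let $\sF'\subseteq \sP^{\Xh}$ be the finite union of $\sF_{\mathrm{pure}}$ and all the pure powers needed to express the $\sG_\alpha$. I then build the required generated list $\q(\Xh',d'\w)$ by adjoining to $\sF'$ the $\sL_0\q(\sF'\w)$-elements that realize the $\sG_\alpha$ (plus any iterated brackets needed so that each $\q(\q[X'_j,X'_k\w],d'_j+d'_k\w)$ is controlled by $\q(\Xh',d'\w)$; this is forced to terminate after finitely many steps because the final list need only span the original generating data up to control, and control is preserved under such finite enlargements). By transitivity of control, $\q(\Xh',d'\w)$ controls every element of $\sF$, hence controls $\q(\Xh,d\w)$, hence controls $W$. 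This is the step I expect to require the most care, since one has to simultaneously produce a finite list, keep all generators pure-power, and verify the internal closure condition on brackets demanded by Definition \ref{DefnGeneratesAFiniteList}.

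Finally, for $(\ref{ItemCondWithW}) \Leftrightarrow (\ref{ItemCondFinal})$, invoke Lemma \ref{LemmaCampbellHausdoffSpan}, which (via Campbell--Hausdorff) relates $X_\alpha$ and $\Xh_\alpha$; the expected content is that for each formal degree $d$, the span of $\q\{\Xh_\alpha:\deg\q(\alpha\w)=d\w\}$ modulo $\sL_0$-brackets of lower pieces equals that of $\q\{X_\alpha:\deg\q(\alpha\w)=d\w\}$, and that the pure/non-pure distinction is preserved. Consequently, $\sL$ applied to pure-power $\Xh_\alpha$'s and $\sL$ applied to pure-power $X_\alpha$'s control exactly the same vector fields on a neighborhood of $0$, so (\ref{ItemCondWithW}.A) and (\ref{ItemCondFinal}.A) are equivalent; the same span comparison converts (\ref{ItemCondWithW}.F) to (\ref{ItemCondFinal}.F) and back, since controlling $W$ by a list built from one family of coefficients is equivalent to controlling it by a list built from the other. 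Combined with the first two steps, this yields $(\ref{ItemSSCond}) \Leftrightarrow (\ref{ItemCondWithW}) \Leftrightarrow (\ref{ItemCondFinal})$.
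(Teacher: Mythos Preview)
Your overall architecture matches the paper's: split into (\ref{ItemSSCond})$\Leftrightarrow$(\ref{ItemCondWithW}) and (\ref{ItemCondWithW})$\Leftrightarrow$(\ref{ItemCondFinal}), with the latter handled via Lemma \ref{LemmaCampbellHausdoffSpan}. The direction (\ref{ItemSSCond})$\Rightarrow$(\ref{ItemCondWithW}) is fine and is exactly what the paper does.

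The genuine gap is in your argument for (\ref{ItemCondWithW})$\Rightarrow$(\ref{ItemSSCond}). You propose to start from a finite set $\sF'\subseteq\sPh$ of pure powers, adjoin the $\sL_0(\sF')$-elements realizing the $\sG_\alpha$, and then keep adding ``any iterated brackets needed'' until the bracket-closure condition in Definition \ref{DefnGeneratesAFiniteList} is satisfied, asserting that this ``is forced to terminate after finitely many steps.'' That termination is not justified: $\sL_0$ of a finite set is typically infinite, and nothing in your reasoning (``the final list need only span the original generating data up to control'') prevents the process of enlarging to achieve bracket-closure from going on forever. Saying control is preserved under enlargement does not by itself bound the number of enlargements.

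The paper avoids this difficulty by a mutual-control trick rather than an iterative enlargement. Let $\sF'$ be the finite list generated (in the sense of Definition \ref{DefnGeneratesAFiniteList}) by the $\sF$ coming from (\ref{ItemCondWithW}.F); this $\sF'$ controls $W$ and, by Remark \ref{RmkContrlWControlX} plus Lemma \ref{LemmaControlCommutator}, controls every element of $\sL(\sPh)$. Conversely, by (\ref{ItemCondWithW}.A) every element of $\sF$ is controlled by $\sL(\sPh)$, hence so is every element of $\sF'$; pick a finite $\sF_0\subseteq\sL_0(\sPh)$ controlling $\sF'$, and enlarge $\sF_0$ by finitely many elements of $\sPh$ so that $\sF_0\subseteq\sL_0(\sF_0\cap\sPh)$. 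Now no further enlargement is needed: for $(Y_1,d_1),(Y_2,d_2)\in\sF_0$ one has $([Y_1,Y_2],d_1+d_2)\in\sL(\sPh)$, which is controlled by $\sF'$, which is controlled by $\sF_0$ (transitivity, Lemma \ref{LemmaControlTransitive}). Thus $\sF_0\cap\sPh$ generates the finite list $\sF_0$, and $\sF_0$ controls $W$. The point is that the already-existing list $\sF'$ from (\ref{ItemCondWithW}.F) supplies the bracket-closure for free via this circular control, so one never iterates.

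Your sketch of (\ref{ItemCondWithW})$\Leftrightarrow$(\ref{ItemCondFinal}) is on the right track but a bit loose. The paper proves (\ref{ItemCondWithW}.F)$\Leftrightarrow$(\ref{ItemCondFinal}.F) by the same mutual-control device as above (using Lemma \ref{LemmaCampbellHausdoffSpan} to express each element of the $\Xh$-generated list as a span of elements of $\sL(\{(X_\alpha,\deg(\alpha))\})$, then running the $\sF_0$ argument again), and treats (\ref{ItemCondWithW}.A)$\Leftrightarrow$(\ref{ItemCondFinal}.A) as a direct consequence of the span equality in Lemma \ref{LemmaCampbellHausdoffSpan}. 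In particular, note that if $\deg(\alpha)$ is pure then every bracket of $\Xh_\beta$'s contributing to $X_\alpha$ (and vice versa) must involve only pure-power indices of the same parameter, so the pure/non-pure dichotomy is indeed respected by the Campbell--Hausdorff comparison; you should make that explicit rather than assert it.
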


We prove Theorem \ref{ThmEquivConds} at the end of this section.

\begin{thm}[Theorem 5.2 of \cite{SteinStreetMultiParameterSingRadonLp}]\label{ThmSSLpBoundedness}
Under any of the above three conditions, there exists $a>0$ such that
for every $\psi_1,\psi_2\in C_0^\infty\q(\R^n\w)$ supported on a sufficiently
small neighborhood of $0$, every $K\in \sK\q(N,e,a,\nu\w)$, and every
$C^\infty$ function $\kappa\q(t,x\w)$, the operator given by,
\begin{equation*}
Tf\q(x\w) =\psi_1\q(x\w) \int f\q(\gamma_t\q(x\w)\w) \psi_2\q(\gamma_t\q(x\w)\w) \kappa\q(t,x\w) K\q(t\w)\: dt
\end{equation*}
is bounded $L^p\rightarrow L^p$ ($1<p<\infty$).
\end{thm}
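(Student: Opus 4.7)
The full proof of Theorem \ref{ThmSSLpBoundedness} is carried out in \cite{SteinStreetMultiParameterSingRadonLp}; the plan here outlines the strategy one would follow. The first move is to invoke Theorem \ref{ThmEquivConds} and work with condition (\ref{ItemSSCond}), which supplies a finite list $(\Xh,d)$, generated by pure-power Taylor coefficients of $W$, that controls $W$ on a neighborhood of $0$. This list defines the multi-parameter Carnot-Carath\'eodory geometry $\{\B{\Xh}{d}{x}{\delta}\}_{\delta\in[0,1]^\nu}$ from Section \ref{SectionCC}, which organizes all subsequent estimates. The role of the control hypothesis is to force the map $t\mapsto\gamma_t(x)$ to move points only within balls of this geometry at the appropriate scale.

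Next, decompose the kernel via Definition \ref{DefnsK} as $K=\sum_{j\in\N^\nu}\dil{\vsig_j}{2^j}$, and split $T=\sum_j T_j$ accordingly. By Proposition \ref{PropBijectGammaW} together with the control of $W$, for $t$ in the support of $\dil{\vsig_j}{2^j}$ one has $\gamma_t(x)\in \B{\Xh}{d}{x}{2^{-j}}$, so each $T_j$ is essentially a smooth averaging operator adapted to that ball. Using the quantitative Frobenius-type theorem from \cite{StreetMultiParameterCCBalls}, together with the uniform bounds on $\{\vsig_j\}$ in Definition \ref{DefnsK}, one obtains uniform-in-$j$ weak-type $(1,1)$ and $L^\infty$ bounds for the $T_j$'s.

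The heart of the argument is an almost-orthogonality / multi-parameter Littlewood-Paley estimate that sums the $T_j$'s in $L^p$. Here the cancellation $\int \vsig_j(t)\,dt_\mu=0$ whenever $j_\mu\neq 0$ is used, once for each parameter $\mu=1,\ldots,\nu$, to produce decay in each $|j_\mu-k_\mu|$ when estimating $T_j T_k^*$ and $T_j^* T_k$. Combining this with a product-type Littlewood-Paley theory built on the geometry of $(\Xh,d)$ reduces the matter to a square-function inequality
\begin{equation*}
\Lppn{p}{\R^n}{\q(\sum_{j\in\N^\nu}|T_j f|^2\w)^{1/2}}\lesssim \Lppn{p}{\R^n}{f},\qquad 1<p<\infty,
\end{equation*}
which is then handled by an $L^2$ orthogonality argument combined with vector-valued Calder\'on-Zygmund theory adapted to the multi-parameter balls.

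The main obstacle is this last square-function / summation step. In the single-parameter case one could invoke Cotlar-Stein directly, but in the multi-parameter setting the geometry of $(\Xh,d)$ is genuinely non-product: commutators of pure-power vector fields from distinct parameters produce non-pure degrees. The content of the control hypothesis --- and the whole reason for the preliminary Theorem \ref{ThmEquivConds} --- is precisely that this non-product geometry is nonetheless compatible with the product cancellation carried by $K\in\sK\q(N,e,a,\nu\w)$. Without this compatibility, almost-orthogonality sums diverge, as the counterexample $\gamma_{(s,t)}(x)=x-st$ in the introduction shows.
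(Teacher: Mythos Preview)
Your proposal and the paper's proof agree on the essential point: the theorem is not proved in this paper at all, but is imported from \cite{SteinStreetMultiParameterSingRadonLp}. The paper's ``proof'' is literally one sentence: under assumption (\ref{ItemSSCond}) this is contained in Theorem~5.2 of \cite{SteinStreetMultiParameterSingRadonLp}. You make exactly this move in your first sentence, so at the level of what this paper actually does, you are correct.

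The difference is that you then go on to sketch what you believe the argument in \cite{SteinStreetMultiParameterSingRadonLp} looks like --- the dyadic decomposition $T=\sum_j T_j$, the localization of $\gamma_t(x)$ in Carnot--Carath\'eodory balls via control of $W$, almost-orthogonality exploiting the cancellation $\int\vsig_j\,dt_\mu=0$, and a multi-parameter square-function bound --- whereas the present paper does not attempt any such outline. Your sketch is a reasonable high-level picture of the machinery in \cite{StreetMultiParameterSingRadonLt, SteinStreetMultiParameterSingRadonLp}, and it correctly identifies the role of the control hypothesis and why the non-pure commutators are the obstruction. But none of that content can be checked against \emph{this} paper, since the authors deliberately treat Theorem~\ref{ThmSSLpBoundedness} as a black box and reserve the analysis for the companion papers. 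So your extra material is commentary rather than proof, and for the purposes of this paper a bare citation (plus the invocation of Theorem~\ref{ThmEquivConds} to pass between the three equivalent conditions) is all that is expected.
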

\begin{proof}
Under the assumption (\ref{ItemSSCond}), this is contained in Theorem 5.2
of \cite{SteinStreetMultiParameterSingRadonLp}.
\end{proof}

\begin{prop}\label{PropFiniteTypeAuto}
When $\gamma$ is real analytic, 
(\ref{ItemCondWithW}.F) and (\ref{ItemCondFinal}.F) hold automatically.
\end{prop}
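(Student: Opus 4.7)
The plan is to prove (\ref{ItemCondFinal}.F); the statement (\ref{ItemCondWithW}.F) then follows by the same interchangeability between the $X_\alpha$ and the $\Xh_\alpha$ that underlies the equivalence (\ref{ItemCondWithW})$\Leftrightarrow$(\ref{ItemCondFinal}) in Theorem \ref{ThmEquivConds} (essentially Lemma \ref{LemmaCampbellHausdoffSpan}): each $\Xh_\alpha$ is a $C^\infty$-linear combination of iterated Lie brackets of $X_\beta$'s of compatible total degree, and conversely, so the modules generated under $\sL_0$ coincide up to degree-preserving transitions.

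The real analyticity of $\gamma$ makes every $X_\alpha$ a germ of a real analytic vector field at $0\in\R^n$. The decisive input, flagged already in Section 2, is the Nagano--Lobry theorem: the involutive module generated by a family of real analytic vector fields is locally finitely generated as a $C^\infty$-module, ultimately because the ring of real analytic germs at $0$ is Noetherian. I would use this to produce a finite subset
\begin{equation*}
\sF=\q\{\q(X_{\alpha_1},\deg\q(\alpha_1\w)\w),\ldots,\q(X_{\alpha_r},\deg\q(\alpha_r\w)\w)\w\}\subseteq \q\{\q(X_\alpha,\deg\q(\alpha\w)\w):\alpha\in\N^N\w\}
\end{equation*}
which generates a finite list $\q(X,d\w)$ in the sense of Definition \ref{DefnGeneratesAFiniteList}, such that each $\q(X_\alpha,\deg\q(\alpha\w)\w)$ is controlled by $\q(X,d\w)$ on a neighborhood of $0$. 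A natural way to arrange this is to filter the collection by total degree $\q|\deg\q(\alpha\w)\w|$ and apply the Noetherian property at each level, so that the module-theoretic closure stabilizes after finitely many degrees; one then closes up under the finitely many Lie brackets needed to account for the degrees appearing in $\sL_0$.

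With that finite list in hand, one shows that $\q(X,d\w)$ controls $W$ in the sense of Definition \ref{DefnControlW}. Substituting $t\mapsto \delta t$ into the convergent real-analytic expansion $W\q(t,x\w)\sim\sum_\alpha t^\alpha\Xh_\alpha\q(x\w)$ and using the algebraic relations expressing each $\Xh_\alpha$ through $\q(X,d\w)$ (with nonnegative excess degree) yields term by term an expansion
\begin{equation*}
W\q(\delta t,x\w)=\sum_{l=1}^q c_l^{x_0,\delta}\q(t,x\w)\,\delta^{d_l} X_l\q(x\w),
\end{equation*}
in which only nonnegative powers of $\delta$ appear. The main obstacle I anticipate is the uniform $C^m$ estimates on the $c_l^{x_0,\delta}$ demanded by Definition \ref{DefnControlW}: existence of the coefficients is supplied by the Noetherian/Nagano--Lobry step, but boundedness uniformly in $\delta\in\q[0,1\w]^\nu$ and in $x_0$ near $0$ is not automatic. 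Here real analyticity re-enters decisively, since the geometric decay of the Taylor coefficients of $W$ combined with analytic bounds on the transition coefficients that express each $\Xh_\alpha$ through $\q(X,d\w)$ is precisely what guarantees that the formal series defining $c_l^{x_0,\delta}$ converges to a smooth function with the required uniform bounds.
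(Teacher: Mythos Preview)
Your treatment of ``generates a finite list'' is essentially the paper's Lemma \ref{LemmaRealAnalAlwaysGeneratesFiniteList}: apply the Noetherian property (Corollary \ref{CorNoeFiniteType}) to $\sL(\sF)$ and close up under brackets. That part is fine.

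The genuine gap is in how you show the finite list controls $W$. You propose to substitute $t\mapsto\delta t$ into the infinite Taylor expansion of $W$, express each coefficient $\Xh_\alpha$ through the finite list, and sum term by term. The obstacle you flag is real and is not resolved by your sketch. The Noetherian step (Theorem \ref{ThmFiniteGenControl}) supplies, for each $\alpha$, coefficients $c_l^\alpha(x)$ with $\Xh_\alpha=\sum_l c_l^\alpha X_l$, but the paper explicitly warns that the neighborhood on which this holds may depend on $\alpha$ (see the last sentence of Theorem \ref{ThmFiniteGenControl} and the discussion in Remark \ref{RmkLobry}), and nothing bounds $\|c_l^\alpha\|_{C^m}$ as $|\alpha|\to\infty$. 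Geometric decay of $\|\Xh_\alpha\|$ does not by itself bound the coefficients in such a representation---writing a small vector field as a combination of fixed ones places no constraint on the size of the coefficients. So the ``analytic bounds on the transition coefficients'' you invoke would require a separate quantitative argument you have not supplied.

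The paper avoids the infinite sum entirely. It applies Theorem \ref{ThmTaylorPrep}, a Weierstrass-type preparation theorem proved via Galligo's division theorem, directly to $W$: there exist finitely many multi-indices $\alpha_1,\ldots,\alpha_r$ and real analytic $c_j\in\sA_{N+n}$ with
\[
W(t,x)=\sum_{j=1}^r c_j(t,x)\,t^{\alpha_j}\,\Xh_{\alpha_j}(x).
\]
Replacing $t$ by $\delta t$ gives $W(\delta t,x)=\sum_j \bigl[c_j(\delta t,x)\,t^{\alpha_j}\bigr]\,\delta^{\deg(\alpha_j)}\Xh_{\alpha_j}(x)$, and now the bracketed coefficients are fixed smooth functions of $(t,x)$ uniformly in $\delta\in[0,1]^\nu$, so the estimates in Definition \ref{DefnControlW} are immediate. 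The finite set $\sF=\{(\Xh_{\alpha_j},\deg(\alpha_j)):1\le j\le r\}$ therefore controls $W$, and Lemma \ref{LemmaRealAnalAlwaysGeneratesFiniteList} shows it generates a finite list. This yields (\ref{ItemCondWithW}.F); (\ref{ItemCondFinal}.F) then follows from Proposition \ref{PropCondWithWEquivCondFinal}. The missing ingredient in your argument is precisely Theorem \ref{ThmTaylorPrep}.
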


We defer the proof of Proposition \ref{PropFiniteTypeAuto}
to Section \ref{SectionReduction}.  From the above results,
Theorem \ref{ThmMainSingIntThm} follows easily.

\begin{proof}[Proof of Theorem \ref{ThmMainSingIntThm} given the above results]
By Theorem \ref{ThmSSLpBoundedness}, it suffices to show that (under
the assumptions of Theorem \ref{ThmMainSingIntThm}),
(\ref{ItemCondFinal}) holds.
Proposition \ref{PropFiniteTypeAuto} shows that 
(\ref{ItemCondFinal}.F) holds, while the assumptions
of Theorem \ref{ThmMainSingIntThm} are exactly that 
(\ref{ItemCondFinal}.A) holds.
\end{proof}

We close this section by proving Theorem \ref{ThmEquivConds}.
We separate Theorem \ref{ThmEquivConds} into two propositions.

\begin{prop}\label{PropSSCondEquivCondWithW}
(\ref{ItemSSCond})$\Leftrightarrow$(\ref{ItemCondWithW}).
\end{prop}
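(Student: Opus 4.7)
The strategy splits along the two directions, relying on Remark \ref{RmkContrlWControlX} (control of $W$ transfers to control of each Taylor coefficient $\q(\Xh_\alpha,\deg\q(\alpha\w)\w)$) and Remark \ref{RmkJacobi} (by the Jacobi identity, a vector field of formal degree $d_0$ in $\sL\q(\sS\w)$ is a linear combination of vector fields $Y$ with $\q(Y,d_0\w)\in\sL_0\q(\sS\w)$).

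For the direction $(\ref{ItemSSCond})\Rightarrow(\ref{ItemCondWithW})$: part (\ref{ItemCondWithW}.F) is immediate, since the finite pure-power set $\sF$ supplied by (\ref{ItemSSCond}) is a fortiori a finite subset of $\q\{\q(\Xh_\alpha,\deg\q(\alpha\w)\w):\alpha\in\N^N\w\}$ and the list $\q(\Xh,d\w)$ it generates already controls $W$. For (\ref{ItemCondWithW}.A), Remark \ref{RmkContrlWControlX} gives that $\q(\Xh,d\w)$ controls every Taylor coefficient $\q(\Xh_\alpha,\deg\q(\alpha\w)\w)$. By construction $\q(\Xh,d\w)\subseteq \sF\cup\sL_0\q(\sF\w)\subseteq\sL\q(\sF\w)$, a finite subset of $\sL$ applied to pure powers, so this very finite subset realizes the control needed for (\ref{ItemCondWithW}.A).

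For $(\ref{ItemCondWithW})\Rightarrow(\ref{ItemSSCond})$ the task is to replace non-pure elements of the generating set $\sF_0$ from (\ref{ItemCondWithW}.F) by pure-power brackets. Split $\sF_0=\sF_0^{\mathrm{pure}}\sqcup\sF_0^{\mathrm{np}}$. For each $\q(\Xh_\alpha,\deg\q(\alpha\w)\w)\in\sF_0^{\mathrm{np}}$, (\ref{ItemCondWithW}.A) supplies a finite subset $\mathcal{G}_\alpha\subseteq\sL\q(\q\{\q(\Xh_\beta,\deg\q(\beta\w)\w):\beta\text{ is a pure power}\w\}\w)$ controlling it; each element of $\mathcal{G}_\alpha$ is an iterated commutator of finitely many specific pure-power vector fields. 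Pool those pure powers (over all $\alpha$) together with $\sF_0^{\mathrm{pure}}$ into a single finite pure-power set $\sF$. Then $\sL\q(\sF\w)$ contains $\bigcup_\alpha \mathcal{G}_\alpha$ and therefore controls every element of $\sF_0$, and combined with the bracket-closure guaranteed for the generation $\sF_0\to\q(\Xh,d\w)$, it also controls every element of $\q(\Xh,d\w)$ and hence $W$.

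The remaining, and principal technical, step is to encode this $\sL\q(\sF\w)$-control into a finite generated list $\q(\Xh',d'\w)$ in the sense of Definition \ref{DefnGeneratesAFiniteList}. I take $\q(\Xh',d'\w)$ to consist of $\sF$ together with finitely many iterated brackets in $\sL_0\q(\sF\w)$, chosen as follows: for each element of $\bigcup_\alpha\mathcal{G}_\alpha$ and for each element of $\q(\Xh,d\w)$, Remark \ref{RmkJacobi} realizes it as a linear combination of vector fields $Y$ with $\q(Y,d_0\w)\in\sL_0\q(\sF\w)$ of matching formal degree, and those $Y$'s are adjoined. The main obstacle is the bracket-closure clause: a commutator of elements of $\q(\Xh',d'\w)$ again lies in $\sL\q(\sF\w)$ and by Remark \ref{RmkJacobi} decomposes into $\sL_0\q(\sF\w)$-spanners of matching degree. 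I resolve this by mirroring the given bracket-closure of $\sF_0\to\q(\Xh,d\w)$ at the $\sL_0\q(\sF\w)$-level: substitute each non-pure entry of $\sF_0$ appearing in the bracket expressions for $\q(\Xh,d\w)$ by its $\mathcal{G}_\alpha$-representation and expand via Jacobi, so that the degrees and control relations inherited from the already-finite list $\q(\Xh,d\w)$ bound the enlargement and the construction terminates at a finite list.
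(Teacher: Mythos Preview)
Your forward direction $(\ref{ItemSSCond})\Rightarrow(\ref{ItemCondWithW})$ is correct and matches the paper.

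For the converse, your construction of a finite pure-power set $\sF$ whose bracket closure $\sL\q(\sF\w)$ controls $W$ is sound (it is essentially the paper's argument, using Lemma \ref{LemmaControlCommutator} and Lemma \ref{LemmaControlTransitive}). The problem is your final paragraph, where you try to pass from $\sL\q(\sF\w)$ to a \emph{finite} list $\q(\Xh',d'\w)$ satisfying the bracket-closure clause of Definition \ref{DefnGeneratesAFiniteList}. Your proposed resolution---``substitute each non-pure entry of $\sF_0$ by its $\mathcal{G}_\alpha$-representation and expand via Jacobi''---does not work as stated: control is not an algebraic identity, so you cannot literally substitute controlled expressions into commutators and expect the result to be controlled by a fixed finite set. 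And even granting a formal substitution, you give no reason why the process terminates; ``the degrees and control relations inherited from $\q(\Xh,d\w)$ bound the enlargement'' is an assertion, not an argument.

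The paper closes this gap with a clean back-and-forth trick that avoids any iterative enlargement. Having chosen a finite $\sF_0\subseteq \sL_0\q(\sPh\w)$ controlling the original list $\sF'=\q(\Xh,d\w)$ (and with $\sF_0\subseteq\sL_0\q(\sF_0\cap\sPh\w)$), one observes that $\sF'$ in turn controls \emph{all} of $\sL\q(\sPh\w)$: indeed $\sF'$ controls $W$ and hence (by Remark \ref{RmkContrlWControlX}) every $\q(\Xh_\alpha,\deg\q(\alpha\w)\w)\in\sPh$, and $\sF'$ is itself bracket-closed in the control sense, so Lemma \ref{LemmaControlCommutator} applies. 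Now any bracket $\q(\q[Y_1,Y_2\w],d_1+d_2\w)$ with $\q(Y_i,d_i\w)\in\sF_0$ lies in $\sL\q(\sPh\w)$, hence is controlled by $\sF'$, hence (since $\sF_0$ controls $\sF'$) by $\sF_0$ via transitivity. Thus $\sF_0$ is already bracket-closed in the control sense---no enlargement is needed at all. This two-way control relation (your new list controls the old one, and the old one controls all of $\sL\q(\sPh\w)$) is the missing idea.
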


\begin{prop}\label{PropCondWithWEquivCondFinal}
(\ref{ItemCondWithW})$\Leftrightarrow$(\ref{ItemCondFinal}).
More specifically, 
(\ref{ItemCondWithW}.F)$\Leftrightarrow$(\ref{ItemCondFinal}.F)
and (\ref{ItemCondWithW}.A)$\Leftrightarrow$(\ref{ItemCondFinal}.A).
\end{prop}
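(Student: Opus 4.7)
The plan is to invoke Lemma \ref{LemmaCampbellHausdoffSpan}, which compares the two families $\{X_\alpha\}$ and $\{\Xh_\alpha\}$ via the Campbell--Baker--Hausdorff formula applied to $W(t,x)=\frac{d}{d\epsilon}|_{\epsilon=1}\gamma_{\epsilon t}\circ\gamma_t^{-1}(x)$. Its essential content, as I expect it to be stated, is that for each multi-index $\alpha$,
\begin{equation*}
\Xh_\alpha = c_\alpha X_\alpha + \sum (\text{iterated brackets of } X_{\beta_1},\ldots,X_{\beta_k}\text{ with } k\geq 2,\ \textstyle\sum_i \deg(\beta_i)=\deg(\alpha)),
\end{equation*}
with $c_\alpha\ne 0$, and symmetrically $X_\alpha$ admits an analogous expansion in the $\Xh_\beta$'s. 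In particular, when $\alpha$ is a pure power, each $\beta_i$ in a bracket summand must itself be a pure power in the same direction (since $\deg$ takes values in $\N^\nu$), and in every case $|\beta_i|<|\alpha|$. Everything below will follow by combining this lemma with induction on $|\alpha|$.

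\textbf{(F) equivalence.} Assume (\ref{ItemCondFinal}.F) holds, with finite $\sF\subseteq\{(X_\alpha,\deg(\alpha))\}$ generating a finite list $(X,d)$ that controls $W$. Let $\sFh$ be the corresponding subset of $\{(\Xh_\alpha,\deg(\alpha))\}$ with the same indices $\alpha$, and let $(\Xh,d)$ be the finite list built from $\sFh$ by applying the same $\sL_0$-bracket shapes used to build $(X,d)$ from $\sF$. By Lemma \ref{LemmaCampbellHausdoffSpan} and induction on $|\alpha|$, at each formal degree the vector fields appearing in $(\Xh,d)$ lie in the $C^\infty$-span of those of $(X,d)$, and vice versa. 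This span-equality transfers control of $W$ from $(X,d)$ to $(\Xh,d)$ in the sense of Definition \ref{DefnControlW}: the coefficients $c_l^{x_0,\delta}$ get replaced only by constant-coefficient linear combinations of themselves, preserving the uniform $C^m$ bounds. This yields (\ref{ItemCondWithW}.F); the reverse implication is symmetric.

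\textbf{(A) equivalence.} Assume (\ref{ItemCondFinal}.A). I first show by induction on $|\alpha|$ that for every pure power $\alpha$, $X_\alpha$ is controlled by $\sL(\{(\Xh_\beta,\deg(\beta)):\beta\text{ pure power}\})$. The base case $|\alpha|=1$ is immediate since $\Xh_\alpha=X_\alpha$. For the inductive step, solve Lemma \ref{LemmaCampbellHausdoffSpan} for $X_\alpha$ as $c_\alpha^{-1}\Xh_\alpha$ plus iterated brackets of $X_{\beta_i}$'s with each $\beta_i$ a pure power of the same direction and $|\beta_i|<|\alpha|$; apply the induction hypothesis and the closure of $\sL$ under brackets. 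Now fix a non-pure power $\alpha$: by (\ref{ItemCondFinal}.A), $X_\alpha$ is controlled by $\sL(\{(X_\beta,\deg(\beta)):\beta\text{ pure power}\})$, which by the preceding step is controlled by $\sL(\{(\Xh_\beta,\deg(\beta)):\beta\text{ pure power}\})$. Finally, $\Xh_\alpha=c_\alpha X_\alpha + (\text{brackets of }X_{\beta_i}\text{'s with }|\beta_i|<|\alpha|)$; by induction on $|\alpha|$, every such $X_{\beta_i}$ (pure or not) is controlled by $\sL(\{(\Xh_\beta,\deg(\beta)):\beta\text{ pure power}\})$, and hence so is $\Xh_\alpha$. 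This gives (\ref{ItemCondWithW}.A); the converse is identical with the roles of $X$ and $\Xh$ swapped.

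\textbf{Main obstacle.} The delicate point is verifying that this Campbell--Hausdorff re-expression really preserves the quantitative control of Definitions \ref{DefnFiniteSetControl} and \ref{DefnControlW} — that the smoothness and uniform $C^m$ bounds on the coefficient functions survive the algebraic manipulations. Because the passage involves only finitely many terms with constant (combinatorial) coefficients, and every auxiliary bracket already lies inside the $\sL_0$-closure of the generating finite subset, this bookkeeping is routine once Lemma \ref{LemmaCampbellHausdoffSpan} is in hand; the real content of the equivalence lies entirely in the span statement provided by that lemma.
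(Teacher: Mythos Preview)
Your argument for the (A) equivalence is essentially correct and in the same spirit as the paper's (which simply says ``a simple consequence of Lemma \ref{LemmaCampbellHausdoffSpan}''); you are just making the Campbell--Hausdorff bookkeeping explicit.

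Your (F) argument, however, has a genuine gap. The step ``at each formal degree the vector fields appearing in $(\Xh,d)$ lie in the $C^\infty$-span of those of $(X,d)$, and vice versa'' is not justified. Lemma \ref{LemmaCampbellHausdoffSpan} equates the spans of the \emph{full} sets $\sL(\{(X_\alpha,\deg(\alpha))\})$ and $\sL(\{(\Xh_\alpha,\deg(\alpha))\})$ at each degree, not of your two specific finite lists. Concretely: when you write $X_{\alpha_i}=c_{\alpha_i}^{-1}\Xh_{\alpha_i}+(\text{brackets of }X_\beta\text{'s with }|\beta|<|\alpha_i|)$, the $\beta$'s appearing in the bracket terms range over \emph{all} multi-indices, not just those in the index set of $\sF$. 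So your induction on $|\alpha|$ only shows that each $X_{\alpha_i}$ is controlled by $\sL(\{(\Xh_\beta,\deg(\beta))\})$, not by your particular finite list $(\Xh,d)$ built from the same indices and bracket shapes. You also do not verify that $(\Xh,d)$ satisfies the commutator-closure condition in Definition \ref{DefnGeneratesAFiniteList}, which is part of what ``generates a finite list'' means.

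The paper sidesteps both issues: starting from the finite list $\sF'$ coming from (\ref{ItemCondWithW}.F), it uses Lemma \ref{LemmaCampbellHausdoffSpan} only to observe that each $(Y_0,d_0)\in\sF'$ has $Y_0$ in the span of $\{Y:(Y,d_0)\in\sL(\sS)\}$ with $\sS=\{(X_\alpha,\deg(\alpha))\}$, and hence finds a \emph{new} finite $\sF_0\subseteq\sL_0(\sS)$ controlling $\sF'$ (and thus $W$, by transitivity). The commutator-closure for $\sF_0$ is then checked exactly as in the proof of Proposition \ref{PropSSCondEquivCondWithW}. In short, rather than trying to transport the same finite list across, the correct move is to produce a fresh finite subset on the other side and redo the commutator-closure verification there.
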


\begin{lemma}\label{LemmaControlTransitive}
The notion of control is transitive.  Indeed, if $\sS_1$ and $\sS_2$
are sets of vector fields with $\nu$-parameter formal degrees
such that every element of $\sS_2$ is controlled by $\sS_1$, and if $\sS_2$ controls 
a vector field with formal degree $\q(X,d\w)$, then so does $\sS_1$.
A similar result holds if $\q(X,d\w)$ is replaced by $W\q(t,x\w)$
as in Definition \ref{DefnControlW}.
\end{lemma}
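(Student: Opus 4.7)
The plan is to unwind Definition \ref{DefnFiniteSetControl} in two layers and compose the resulting representations. Since $\sS_2$ controls $\q(X,d\w)$, there is a finite sublist $\sF_2=\q\{\q(Y_1,d_1'\w),\ldots, \q(Y_q,d_q'\w)\w\}\subseteq \sS_2$, an open neighborhood $U_2$ of $0$, a constant $\tau_2>0$, and coefficients $c_{x,j}^\delta$ with
\[
\delta^d X = \sum_{j=1}^q c_{x,j}^\delta\, \delta^{d_j'} Y_j \quad \text{on } \B{Y}{d'}{x}{\tau_2\delta},
\]
satisfying uniform bounds on $\q(\delta Y\w)^\alpha c_{x,j}^\delta$ of every order. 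By hypothesis each $\q(Y_j,d_j'\w)$ is controlled by $\sS_1$; gathering the finitely many associated finite subsets into a single finite list $\sF_1=\q\{\q(Z_1,d_1''\w),\ldots, \q(Z_p,d_p''\w)\w\}\subseteq \sS_1$ that controls every member of $\sF_2$, we obtain on $\B{Z}{d''}{x}{\tau_1\delta}$ representations
\[
\delta^{d_j'} Y_j = \sum_{k=1}^p e_{x,j,k}^\delta\, \delta^{d_k''} Z_k,
\]
with the analogous uniform bounds on $\q(\delta Z\w)^\alpha e_{x,j,k}^\delta$. Formal substitution then produces the candidate identity
\[
\delta^d X = \sum_{k=1}^p \q(\sum_{j=1}^q c_{x,j}^\delta e_{x,j,k}^\delta\w)\delta^{d_k''} Z_k,
\]
which is to witness that $\sS_1$ controls $\q(X,d\w)$.

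Two compatibility issues must then be addressed. First, the two representations are supplied on a priori distinct balls. However, because $\sS_1$ controls each $Y_j$, every $\delta Y$-admissible curve can be rewritten as a $\delta Z$-admissible curve of comparable speed, which gives a standard containment $\B{Y}{d'}{x}{\tau\delta}\subseteq \B{Z}{d''}{x}{C\tau\delta}$ with $C>0$ uniform in $x\in U_2$ and $\delta\in \q[0,1\w]^\nu$. Combined with the equivalent formulations of control developed in Section 5.3 of \cite{StreetMultiParameterCCBalls}, this lets one realize both identities simultaneously on a single Carnot--Carath\'eodory ball $\B{Z}{d''}{x}{\tau\delta}$ adapted to $\q(Z,d''\w)$, after shrinking $\tau_1$ and $U\subseteq U_2$. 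Second, we need uniform bounds of every order on $\q(\delta Z\w)^\alpha\q(c_{x,j}^\delta e_{x,j,k}^\delta\w)$: since each scaled vector field $\delta^{d_j'}Y_j$ is a smooth combination of the scaled $Z$-fields with uniformly bounded $\q(\delta Z\w)$-derivatives, any $\q(\delta Y\w)^\beta$-derivative of $c_{x,j}^\delta$ can be rewritten as a polynomial combination of $\q(\delta Z\w)^\gamma$-derivatives with coefficients polynomial in the bounded higher derivatives of the $e_{x,j,k}^\delta$. Applying the Leibniz rule with the given bounds on $c$ and $e$ then yields the required uniform bounds on the product.

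The main obstacle is exactly this double bookkeeping: reconciling the two scales of balls and the interchange between $\q(\delta Y\w)$- and $\q(\delta Z\w)$-derivatives; all of the genuinely analytic content is absorbed into the multi-parameter Carnot--Carath\'eodory ball theory of \cite{StreetMultiParameterCCBalls}. For the $W\q(t,x\w)$ version of Definition \ref{DefnControlW} the same scheme works verbatim: we carry the extra parameter $t\in B^N\q(\rho_1\w)$ through both layers, and since $\partial_t$ commutes with vector fields on $\R^n$ and distributes across products via Leibniz, the additional bounds on $\q(\delta Z\w)^\alpha\partial_t^\beta$-derivatives propagate through the composition without change.
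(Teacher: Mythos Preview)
The paper's own proof is a single line, ``This follows immediately from the definitions,'' so your unpacking is necessarily more detailed than what the paper provides; the substitution strategy you describe is exactly what that line is gesturing at.

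There is, however, one place where your concrete argument runs in the wrong direction. For the composed coefficients $\sum_j c_{x,j}^\delta e_{x,j,k}^\delta$ you need bounds on $\q(\delta Z\w)^\alpha c_{x,j}^\delta$, whereas Definition~\ref{DefnFiniteSetControl} only hands you bounds on $\q(\delta Y\w)^\beta c_{x,j}^\delta$. The identity $\delta^{d_j'}Y_j=\sum_k e_{x,j,k}^\delta\,\delta^{d_k''}Z_k$ lets you express $\q(\delta Y\w)^\beta c$ as a combination of $\q(\delta Z\w)^\gamma c$, not the reverse; knowing the former are bounded does not by itself bound the latter. (Indeed, $\sF_1$ controlling $\sF_2$ says nothing about writing $\delta Z$'s in terms of $\delta Y$'s.) The same asymmetry underlies the ball issue you flagged: the containment $\B{Y}{d'}{x}{\tau\delta}\subseteq \B{Z}{d''}{x}{C\tau\delta}$ goes the wrong way for restricting the layer-one identity to a $Z$-ball.

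The fix is the one you already invoke for the balls: pass to the equivalent formulations of control in Section~5.3 of \cite{StreetMultiParameterCCBalls} (and Section~11.1 of \cite{StreetMultiParameterSingRadonLt}), where the coefficients live on a fixed Euclidean neighborhood with ordinary $C^m$ bounds. In that formulation both the domain and the derivative issues disappear, and transitivity really is a one-line Leibniz computation---which is presumably what the paper means by ``immediate.'' Your write-up would be cleaner if you appealed to that equivalent formulation for \emph{both} compatibility issues rather than attempting the $\q(\delta Y\w)\leftrightarrow\q(\delta Z\w)$ conversion directly.
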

\begin{proof}
This follows immediately from the definitions.
\end{proof}

\begin{lemma}\label{LemmaControlCommutator}
If $\sS_1, \sS_2$ are sets of vector fields with $\nu$-parameter formal degrees,
such that,
\begin{itemize}
\item for every $\q(X_1,d_1\w),\q(X_2,d_2\w)\in \sS_1$, $\q(\q[X_1,X_2\w],d_1+d_2\w)$ is controlled by $\sS_1$,
\item every element of $\sS_2$ is controlled by $\sS_1$.
\end{itemize}
Then, every element of $\sL\q(\sS_2\w)$ is controlled by $\sS_1$.
\end{lemma}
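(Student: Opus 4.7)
The plan is to reduce the statement to an auxiliary closure property: if $\sT$ is any set of vector fields with $\nu$-parameter formal degrees such that the commutator of any two elements of $\sT$ is controlled by $\sT$, then the commutator of any two vector fields controlled by $\sT$ is again controlled by $\sT$. Granting this, the lemma follows in two steps. First, by induction on the depth of commutator formation combined with Lemma \ref{LemmaControlTransitive}, every element of $\sL\q(\sS_1\w)$ is controlled by $\sS_1$; the base case is $\sS_1 \subseteq \sL\q(\sS_1\w)$, and the inductive step uses the hypothesis that a single commutator of elements of $\sS_1$ is controlled by $\sS_1$. Second, apply the auxiliary claim with $\sT = \sL\q(\sS_1\w)$, which is closed under commutators by definition; inducting on depth in $\sL\q(\sS_2\w)$, every element of $\sL\q(\sS_2\w)$ is controlled by $\sL\q(\sS_1\w)$, and hence by $\sS_1$ by transitivity.

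The heart of the argument is the auxiliary claim. Fix $\q(Y_1, e_1\w)$ and $\q(Y_2, e_2\w)$ controlled by $\sT$. Since each instance of control invokes only a finite subset of $\sT$ and only finitely many commutators $\q[X_j, X_k\w]$ are involved, choose one finite list $\q(X, d\w) = \q(X_1, d_1\w), \ldots, \q(X_q, d_q\w) \subseteq \sT$ that simultaneously controls $Y_1$, $Y_2$, and every $\q(\q[X_j, X_k\w], d_j + d_k\w)$. On a common ball $\B{X}{d}{x_0}{\tau_1 \delta}$, write
\begin{equation*}
\delta^{e_1} Y_1 = \sum_{j=1}^q a_j^{x_0, \delta}\, \delta^{d_j} X_j, \qquad \delta^{e_2} Y_2 = \sum_{k=1}^q b_k^{x_0, \delta}\, \delta^{d_k} X_k,
\end{equation*}
and apply the Leibniz identity $\q[fX, gY\w] = fg\q[X, Y\w] + f\q(Xg\w)Y - g\q(Yf\w)X$ to obtain
\begin{equation*}
\delta^{e_1 + e_2}\q[Y_1, Y_2\w] = \sum_{j,k} a_j b_k\, \delta^{d_j + d_k}\q[X_j, X_k\w] + \sum_{j,k} a_j \q(\q(\delta^{d_j} X_j\w) b_k\w)\, \delta^{d_k} X_k - \sum_{j,k} b_k \q(\q(\delta^{d_k} X_k\w) a_j\w)\, \delta^{d_j} X_j.
\end{equation*}
The first sum is controlled by $\q(X, d\w)$ because each $\delta^{d_j + d_k}\q[X_j, X_k\w]$ re-expands on a possibly smaller ball as $\sum_l c_{jk, l}^{x_0, \delta}\, \delta^{d_l} X_l$ with coefficients satisfying the required $\q(\delta X\w)^\alpha$-bounds. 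The other two sums are already in the desired form, with coefficients $a_j \q(\q(\delta^{d_j} X_j\w) b_k\w)$ and $b_k \q(\q(\delta^{d_k} X_k\w) a_j\w)$ that are uniformly bounded along with all their $\q(\delta X\w)^\alpha$-derivatives, by the hypothesis on $a_j$ and $b_k$. Shrinking $\tau_1$ once at the outset so that all the sub-ball re-expansions are valid on a common ball, we conclude that $\q(\q[Y_1, Y_2\w], e_1 + e_2\w)$ is controlled by $\q(X, d\w) \subseteq \sT$.

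The main obstacle is bookkeeping: one must verify that after the Leibniz expansion and the re-expansion of each $\q[X_j, X_k\w]$ in terms of the $X_l$, the composed coefficients remain uniformly bounded in all $\q(\delta X\w)^\alpha$-norms, uniformly in $\q(x_0, \delta\w)$. This is routine once one observes that the three operations involved --- multiplication of bounded coefficients, differentiation by $\delta^{d_j} X_j$, and substitution from a control relation --- each preserve the class of coefficients satisfying the required $C^\infty$ bounds from Definition \ref{DefnFiniteSetControl}. No new ideas beyond those already used in proving Lemma \ref{LemmaControlTransitive} are required.
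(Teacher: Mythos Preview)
Your argument is correct and is precisely the Leibniz-rule unwinding the paper has in mind when it writes ``This follows immediately from the definitions.'' One simplification: the detour through $\sL(\sS_1)$ is unnecessary, since your auxiliary claim applied directly with $\sT=\sS_1$ (which satisfies the closure hypothesis by assumption) already shows that the class of pairs controlled by $\sS_1$ is closed under commutators, and then a single induction on depth in $\sL(\sS_2)$ finishes the lemma; as written, your ``first step'' for $\sL(\sS_1)$ actually also needs the auxiliary claim, not merely the single-commutator hypothesis. Also, the sentence ``choose one finite list $(X,d)$ that simultaneously controls $Y_1$, $Y_2$, and every $([X_j,X_k],d_j+d_k)$'' should be read as a two-stage enlargement (first pick a list for $Y_1,Y_2$, then adjoin the finitely many controlling sets for the commutators of that list) to avoid apparent circularity.
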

\begin{proof}
This follows immediately from the definitions.
\end{proof}

\begin{rmk}
Note that if $\sF$ is a finite set of vector fields which generates
a finite list $\q(X,d\w)=\q(X_1,d_1\w),\ldots, \q(X_q,d_q\w)$ as in Definition
\ref{DefnGeneratesAFiniteList},
then $\q(X,d\w)$ satisfies the hypotheses of $\sS_1$ in Lemma \ref{LemmaControlCommutator}.
\end{rmk}

\begin{proof}[Proof of Proposition \ref{PropSSCondEquivCondWithW}]
(\ref{ItemSSCond})$\Rightarrow$(\ref{ItemCondWithW}): 
(\ref{ItemCondWithW}.F) follows immediately
from (\ref{ItemSSCond}).  (\ref{ItemCondWithW}.A)
follows from (\ref{ItemSSCond}) via Remark \ref{RmkContrlWControlX}.

(\ref{ItemCondWithW})$\Rightarrow$(\ref{ItemSSCond}):  
Take $\sF$ as in (\ref{ItemCondWithW}) and
let $\sF'$ be a finite list generated by $\sF$ (see Definition \ref{DefnGeneratesAFiniteList}), so that $\sF'$ controls
$W$, and set
$$\sPh=\q\{\q(\Xh_\alpha,\deg\q(\alpha\w): \alpha \text{ is a pure power}\w)\w\}.$$
By our assumption, every element of $\sF$
is controlled by $\sL\q(\sPh\w)$.  
Thus, every element of $\sL\q(\sF\w)$ is controlled by $\sL\q(\sPh\w)$ (Lemma
\ref{LemmaControlCommutator}).
It follows that
every element of $\sF'$ is controlled by $\sL\q(\sPh\w)$.
By Remark \ref{RmkJacobi}, every element of $\sF'$ is therefore
controlled by $\sL_0\q(\sPh\w)$.
Let $\sF_0\subseteq \sL_0\q(\sPh\w)$ be a finite subset such that every
element of $\sF'$ is controlled by $\sF_0$.
We may assume that $\sF_0\subseteq \sL_0\q(\sF_0\cap \sPh\w)$; indeed,
since $\sF_0\subseteq \sL_0\q(\sPh\w)$, we may add a finite number of elements
to $\sF_0$ from $\sPh$ so that $\sF_0\subseteq \sL_0\q(\sF_0\cap \sPh\w)$.
Since $\sF'$ controls $W$ on a neighborhood of $0$, it follows
that $\sF_0$ controls $W$ (Lemma \ref{LemmaControlTransitive}).

To complete the proof, we need to show that if $\q(Y_1,d_1\w),\q(Y_2,d_2\w)\in \sF_0$, then $\q(\q[Y_1,Y_2\w],d_1+d_2\w)$ is controlled by $\sF_0$; for then
$\sF_0\cap \sPh$ will generate the finite list $\sF_0$ (which we know to
control $W$).
To do this, it suffices to show that $\q(\q[Y_1,Y_2\w],d_1+d_2\w)$
is controlled by $\sF'$ (by Lemma \ref{LemmaControlTransitive}, since every element of $\sF'$
is controlled by $\sF_0$).
In particular, it suffices to show that every element of
$\sL\q(\sPh\w)$ is controlled by $\sF'$.

By Remark \ref{RmkContrlWControlX}, every element of
$\sPh$ is controlled by $\sF'$.  We know, by assumption, that
if $\q(X_1,d_1\w),\q(X_2,d_2\w)\in \sF'$, then $\q(\q[X_1,X_2\w],d_1+d_2\w)$
is controlled by $\sF'$.  It follows from Lemma \ref{LemmaControlCommutator}
that every element of $\sL\q(\sPh\w)$ is controlled by $\sF'$.
This completes the proof.
\end{proof}

\begin{lemma}\label{LemmaCampbellHausdoffSpan}
Let $\Xh_\alpha$ be as in (\ref{ItemSSCond}) and $X_\alpha$
be as in (\ref{ItemCondFinal}), then, for every $d_0\in \N^\nu$,
\begin{equation*}
\Span{Y : \q(Y,d_0\w)\in \sL\q(\q\{\q(X_\alpha,\deg\q(\alpha\w)\w)\w\}\w) }
=\Span{Y : \q(Y,d_0\w)\in \sL\q(\q\{\q(\Xh_\alpha,\deg\q(\alpha\w)\w)\w\}\w) }
\end{equation*}
\end{lemma}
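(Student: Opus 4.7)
The plan is to relate the two families $\q\{X_\alpha\w\}$ and $\q\{\Xh_\alpha\w\}$ via the Campbell--Baker--Hausdorff (CBH) formula and then establish the two inclusions by induction on $\q|\alpha\w|$. First I would set $Z_t\sim\sum_\alpha t^\alpha X_\alpha$ so that $\gamma_t\sim \exp\q( Z_t\w)$, whence
\begin{equation*}
\gamma_{\epsilon t}\circ \gamma_t^{-1}\sim \exp\q( Z_{\epsilon t}\w)\exp\q( -Z_t\w) = \exp\q( C\q(\epsilon,t\w)\w),
\end{equation*}
where $C\q(\epsilon,t\w)$ is the formal CBH series, a sum of iterated Lie brackets of $Z_{\epsilon t}$ and $Z_t$. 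Since $C\q( 1,t\w)=0$, differentiating at $\epsilon=1$ yields $W\q( t,x\w)=\q(\partial_\epsilon C\w)\q( 1,t,x\w)$, and matching coefficients of $t^\alpha$ expresses each $\Xh_\alpha$ as a finite linear combination of iterated brackets $\q[X_{\beta_1},\q[X_{\beta_2},\ldots,\q[X_{\beta_{k-1}},X_{\beta_k}\w]\w]\w]$ with $\sum_i\beta_i=\alpha$.

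This identity immediately yields the inclusion $\supseteq$: any iterated bracket of $\Xh$'s at total degree $d_0$ expands, via the formula above, into a sum of iterated brackets of $X$'s at the same total degree $d_0$, and hence lies in the right-hand span.

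For the reverse inclusion, I would argue by induction on $\q|\alpha\w|$ that each $X_\alpha$ lies in $\Span{Y : \q( Y,\deg\q(\alpha\w)\w)\in \sL\q(\q\{\q(\Xh_\beta,\deg\q(\beta\w)\w)\w\}\w)}$. For the base case $\q|\alpha\w|=1$: the non-leading terms of $C\q(\epsilon,t\w)$ are brackets of $\geq 2$ copies of $Z_{\epsilon t}$ and $Z_t$, hence of order $\geq 2$ in $t$, so at first order in $t$ one has $\partial_\epsilon C\q(1,t,x\w)=\sum_{\q|\alpha\w|=1} t^\alpha X_\alpha$, giving $\Xh_\alpha=X_\alpha$ in this case. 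For the inductive step, the same bookkeeping yields
\begin{equation*}
X_\alpha=\Xh_\alpha-R_\alpha,
\end{equation*}
where $R_\alpha$ is a linear combination of iterated brackets of $X_\beta$'s with $\sum_i\beta_i=\alpha$ and each $\q|\beta\w|<\q|\alpha\w|$ (since brackets arising from the order-$\geq 2$ CBH terms split $\alpha$ into at least two pieces). Applying the inductive hypothesis to each $X_\beta$ appearing in $R_\alpha$, together with the fact that $\sL\q(\q\{\q(\Xh_\gamma,\deg\q(\gamma\w)\w)\w\}\w)$ is closed under brackets with additive degrees, places $X_\alpha$ in the desired span.

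The main obstacle is the degree bookkeeping inside the CBH expansion---specifically, the assertion that $R_\alpha$ involves only $X_\beta$ with $\q|\beta\w|<\q|\alpha\w|$. This is really the heart of the lemma; once it is in hand, the rest reduces to the routine observation that both sides are spans of iterated brackets at fixed total degree $d_0$ and that the two generating sets $\q\{X_\alpha\w\}$ and $\q\{\Xh_\alpha\w\}$ are triangularly related in the partial order given by $\q|\alpha\w|$.
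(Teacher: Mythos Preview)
Your approach is correct and is exactly what the paper has in mind: the paper's own proof consists of the single sentence ``This follows easily from an application of the Campbell--Hausdorff formula,'' together with a pointer to Proposition~9.6 of \cite{ChristNagelSteinWaingerSingularAndMaximalRadonTransforms}, so you have supplied the details the paper omits. One small correction: the linear CBH term gives $\Xh_\alpha = \q|\alpha\w|\, X_\alpha + R_\alpha$ rather than $\Xh_\alpha = X_\alpha + R_\alpha$, so the inversion reads $X_\alpha = \q|\alpha\w|^{-1}\q(\Xh_\alpha - R_\alpha\w)$; since $\q|\alpha\w|\geq 1$ this constant is harmless for the span argument, and the degree bookkeeping you flag as the ``main obstacle'' does go through exactly as you describe.
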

\begin{proof}
This follows easily from an application of the Campbell-Hausdorff
formula.  See the proof of Proposition 9.6
of \cite{ChristNagelSteinWaingerSingularAndMaximalRadonTransforms}
for a similar result and more details.
\end{proof}

\begin{proof}[Proof of Proposition \ref{PropCondWithWEquivCondFinal}]
We begin by showing 
(\ref{ItemCondWithW}.F)$\Rightarrow$(\ref{ItemCondFinal}.F);
the implication (\ref{ItemCondFinal}.F)$\Rightarrow$(\ref{ItemCondWithW}.F)
follows in the same way, and we leave the details to the reader.
Suppose that (\ref{ItemCondWithW}.F) holds:
there is a finite set $\sF$ as in (\ref{ItemCondWithW}.F) which generates
a finite list, $\sF'$ and this finite list controls $W$.
Define,
\begin{equation*}
\sS=\q\{\q(X_\alpha,\deg\q(\alpha\w)\w): \alpha\in \N\w\}.
\end{equation*}
By Lemma \ref{LemmaCampbellHausdoffSpan}, for every $\q(Y_0,d_0\w)\in \sF'$,
$Y_0\in \Span{Y : \q(Y_0,d_0\w)\in \sL\q(\sS\w) }$.
Thus, there is a finite subset $\sF_0\subseteq \sL\q(\sS\w)$, such that
$\sF_0$ controls $\sF'$.  By Remark \ref{RmkJacobi},
we may assume that $\sF_0\subseteq \sL_0\q(\sS\w)$.  Furthermore,
by adding a finite number of elements to $\sF_0$, we may assume that
$\sF_0\subseteq \sL_0\q( \sS\cap \sF_0\w)$.

Since $\sF'$ controls $W$, we have that $\sF_0$ controls $W$.
To complete the proof, we need only verify that for every
$\q(X_1,d_1\w),\q(X_2,d_2\w)\in \sF_0$, $\q(\q[X_1,X_2\w],d_1+d_2\w)$ is
controlled by $\sF_0$; for then $\sF_0\cap \sS$ will generate the finite list
$\sF_0$ which we already know controls $W$.  That this is true follows just
as in the proof of Proposition \ref{PropSSCondEquivCondWithW}.
This completes the proof of (\ref{ItemCondFinal}.F).

(\ref{ItemCondWithW}.A)$\Leftrightarrow$(\ref{ItemCondFinal}.A) is a simple consequence
of Lemma \ref{LemmaCampbellHausdoffSpan}, which we leave to the reader.
\end{proof}

\section{A primer on real analytic functions}\label{SectionRealAnal}
In this section, we introduce the theory we need to see our theorems
concerning real analytic $\gamma$ as special cases of theorems
concerning $C^\infty$ $\gamma$, which are amenable to the methods of
\cite{SteinStreetMultiParameterSingRadonLp}.  Let
\begin{equation*}
\sA_N=\q\{f:\R_0^N\rightarrow \R\big| f\text{ is real analytic}\w\},
\end{equation*}
the set of germs of real analytic functions defined on a
neighborhood of $0\in \R^N$, and taking values in $\R$.  Note that
$\sA_N$ is a ring, and,
\begin{equation*}
\sA_{N}^m=\q\{f:\R_0^N\rightarrow \R^m\big| f\text{ is real
analytic}\w\}.
\end{equation*}
$\sA_N^m$ is an $\sA_N$-module.

\begin{thm}\label{ThmTaylorPrep}
Suppose $f\q(t,x\w):\R_0^N\times\R_0^n\rightarrow \R^m$ is a germ of
a real analytic function; $f\in \sA_{N+n}^m$.  For $\alpha\in
\N^{\nu}$, let $f_\alpha\q( x\w)\in \sA_n^m$ be the Taylor
coefficient of $f$, when the Taylor series is taken in the $t$
variable:
\begin{equation}\label{EqnTaylorPrepTaylorSeries}
f\q( t,x\w) = \sum_{\alpha\in \N^{N}} t^{\alpha} f_\alpha\q( x\w).
\end{equation}
Then, there exist finitely many multi-indices $\alpha_1,\ldots,
\alpha_r\in \N^{\nu}$, and germs of real analytic functions
$c_{\alpha_1},\ldots, c_{\alpha_r}\in \sA_{N+n}$ such that,
\begin{equation}\label{EqnTaylorPrepFiniteSum}
f\q( t,x\w) = \sum_{k=1}^r c_{\alpha_k}\q( t,x\w) t^{\alpha_k}
f_{\alpha_k}\q( x\w),
\end{equation}
on a neighborhood of $\q( 0,0\w)\in \R^N\times \R^n$.  Furthermore,
we may assume for every $1\leq j,k\leq r$,
\begin{equation}\label{EqnTaylorPrepGoodCoef}
\frac{1}{\alpha_j!} \frac{\partial}{\partial t}^{\alpha_j}
t^{\alpha_k} c_{\alpha_k}\q( t,x\w) \bigg|_{t=0} =\begin{cases} 1 &
\text{if }j=k,\\
0 & \text{if }j\ne k.
\end{cases}
\end{equation}
\end{thm}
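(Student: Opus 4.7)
My approach is first to reduce to a finite-generation question using the Noetherian property of real analytic germs, then to establish the identity in the larger ring of formal power series via a faithful flatness argument, and finally to impose the normalization via a Galligo-style preparation.

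For the reduction, recall that the ring $\sA_{N+n}$ is Noetherian (a classical result of R\"uckert). Hence the $\sA_{N+n}$-submodule $M \subseteq \sA_{N+n}^m$ generated by the infinite family $\{t^\alpha f_\alpha(x) : \alpha \in \N^N\}$ is finitely generated, and I may choose $\alpha_1, \ldots, \alpha_r \in \N^N$ so that $\{t^{\alpha_k} f_{\alpha_k}(x)\}_{k=1}^r$ already generates $M$. To show $f \in M$, I pass to the formal completion $\widehat R := \R[[t,x]]$, which is faithfully flat over $R := \sA_{N+n}$. In $\widehat R^m$ the identity $f = \sum_\alpha t^\alpha f_\alpha(x)$ holds with partial sums in $M$ and convergence in the $(t,x)$-adic topology. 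Let $\widehat M \subseteq \widehat R^m$ be the extension of $M$; since $\widehat M$ is finitely generated over the Noetherian local ring $\widehat R$, it is adically closed by the Artin-Rees lemma, so $f \in \widehat M$. Faithful flatness then yields $M = \widehat M \cap R^m$, and therefore $f \in M$. This produces functions $c_{\alpha_k} \in \sA_{N+n}$ obeying \eqref{EqnTaylorPrepFiniteSum}.

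For the normalization \eqref{EqnTaylorPrepGoodCoef}, the $(c_{\alpha_k})_k$ constructed above are not unique: valid choices form a coset of the finitely generated syzygy submodule of relations $\sum s_k t^{\alpha_k} f_{\alpha_k} = 0$. Comparing $t^{\alpha_j}$-Taylor coefficients of both sides of \eqref{EqnTaylorPrepFiniteSum} shows that the normalization constraints are self-consistent affine-linear conditions on the Taylor expansions of the $c_{\alpha_k}$'s (the diagonal requirement is just $f_{\alpha_j} = f_{\alpha_j}$). I would then canonicalize the $(c_{\alpha_k})_k$ by invoking Galligo's Weierstrass-type preparation theorem in convergent power series, flagged in the paper's introductory footnote, applied to the family $\{t^{\alpha_k} f_{\alpha_k}\}_k$ with a suitable monomial ordering, to obtain a division-algorithm normal form realizing \eqref{EqnTaylorPrepGoodCoef}.

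The main obstacle is precisely this last step: naively iterating Taylor-coefficient corrections risks producing formal corrections that fail to converge as real analytic germs, so the full Galligo theorem, which guarantees that the division algorithm preserves convergence, is essential to stay inside $\sA_{N+n}$. The first two steps by contrast are standard applications of Noetherianness and faithful flatness of the completion.
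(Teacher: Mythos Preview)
Your argument for the first step---showing $f$ lies in the submodule $M$ generated by the $t^\alpha f_\alpha$---is correct and genuinely different from the paper's. The paper uses Galligo's division theorem here: it writes $f \equiv r \pmod{M}$ with $r$ having Newton diagram disjoint from $E_L(M)$, then argues that if $r\neq 0$ one can truncate the tail of $f$ to exhibit $\exp_L(r)\in E_L(M)$, a contradiction. Your faithful-flatness route (partial sums lie in $M$, converge $(t,x)$-adically to $f$, $\widehat{M}$ is closed by Artin--Rees, and $\widehat{M}\cap R^m=M$) is a clean commutative-algebra alternative that avoids any explicit standard-basis machinery. Either approach yields \eqref{EqnTaylorPrepFiniteSum}.

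Where your proposal goes astray is the normalization step \eqref{EqnTaylorPrepGoodCoef}. You flag this as the hard part and reach for Galligo to guarantee convergence of an iterative correction; in fact it is the easy part, and no iteration is needed. The paper simply takes any $c_{\alpha_k}$ satisfying \eqref{EqnTaylorPrepFiniteSum} and replaces each $t^{\alpha_k}c_{\alpha_k}$ by
\[
t^{\alpha_k}c_{\alpha_k}(t,x)\;-\;\sum_{j=1}^r \frac{t^{\alpha_j}}{\alpha_j!}\Big[\partial_s^{\alpha_j}\big|_{s=0}\, s^{\alpha_k}c_{\alpha_k}(s,x)\Big]\;+\;t^{\alpha_k}.
\]
This is a single finite correction by a polynomial in $t$ with real-analytic coefficients in $x$, so it is manifestly in $\sA_{N+n}$; there is no convergence issue whatsoever. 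The new coefficients visibly satisfy \eqref{EqnTaylorPrepGoodCoef}, and the identity \eqref{EqnTaylorPrepFiniteSum} is preserved precisely because of the Taylor-coefficient consistency you already observed (both the correction terms and the added $t^{\alpha_k}f_{\alpha_k}$ reassemble to $\sum_j \frac{t^{\alpha_j}}{\alpha_j!}\partial_s^{\alpha_j}|_{s=0}f(s,x)$). So you have the two halves inverted: the membership $f\in M$ is where nontrivial input (Galligo or, in your version, faithful flatness) is required, while the normalization is a one-line explicit adjustment.
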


\begin{thm}\label{ThmFiniteGenControl}
Suppose
$$\sS\subseteq \sA_N^n\times \N^{\nu}.$$
Then there exists a finite subset $\sF\subseteq \sS$ such that 
every $\q( g,e\w)\in \sS$ can be written in the form,
\begin{equation}\label{EqnFiniteGenControl}
g\q(x\w) = \sum_{\substack{\q( f,d\w)\in \sF\\ d\leq e}} c_{\q(
f,d\w)}\q( x\w) f\q( x\w);
\end{equation}
where $c_{\q( f,d\w)}\in \sA_N$, and $d\leq e$ means that the
inequality holds for each coordinate. The neighborhood on which
\eqref{EqnFiniteGenControl} holds may depend on $\q( g,e\w)$.
\end{thm}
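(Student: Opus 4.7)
The plan is to convert the constrained decomposition into a module-theoretic statement over a Noetherian polynomial ring. Recall that the local ring $\sA_N$ of germs of real analytic functions at $0\in\R^N$ is Noetherian, since it is isomorphic to the ring of convergent power series in $N$ real variables. Introducing formal indeterminates $y_1,\ldots,y_\nu$, the polynomial ring $R=\sA_N[y_1,\ldots,y_\nu]$ is then Noetherian by the Hilbert basis theorem, so $R^n$ is a Noetherian $R$-module.

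To each pair $(g,e)\in\sS$ with $e=(e_1,\ldots,e_\nu)$, I associate the element $g\,y^e\in R^n$, where $y^e=y_1^{e_1}\cdots y_\nu^{e_\nu}$. Let $M\subseteq R^n$ be the $R$-submodule generated by the family $\{gy^e:(g,e)\in\sS\}$. By Noetherianity, $M$ admits a finite generating set, and since each generator is a finite $R$-linear combination of elements of the form $gy^e$ with $(g,e)\in\sS$, I may replace it with a finite subset $\sF\subseteq\sS$ such that $\{fy^d:(f,d)\in\sF\}$ still generates $M$. This $\sF$ will be the desired finite subset.

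For an arbitrary $(g,e)\in\sS$, the element $gy^e$ lies in $M$, so there exist $h_{(f,d)}\in R=\sA_N[y]$ with
\begin{equation*}
gy^e=\sum_{(f,d)\in\sF}h_{(f,d)}(x,y)\,f(x)\,y^d.
\end{equation*}
Expanding $h_{(f,d)}(x,y)=\sum_{\beta\in\N^\nu} c_{(f,d),\beta}(x)\,y^\beta$ with $c_{(f,d),\beta}\in\sA_N$ (only finitely many nonzero) and comparing the coefficient of $y^e$ on the two sides (the left side contributes exactly $g(x)$), only terms with $d\leq e$ componentwise can contribute, yielding
\begin{equation*}
g(x)=\sum_{\substack{(f,d)\in\sF\\ d\leq e}}c_{(f,d),e-d}(x)\,f(x),
\end{equation*}
which is of the required form with $c_{(f,d)}(x):=c_{(f,d),e-d}(x)\in\sA_N$. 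As this is an identity of germs, it holds on some neighborhood of $0$ depending a priori on $(g,e)$.

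The essential ingredient is the Noetherianity of $\sA_N$; the polynomial-ring device merely encodes the componentwise inequality $d\leq e$ as the algebraic fact that the coefficient of $y^e$ in $h_{(f,d)}y^d$ vanishes unless $d\leq e$. I do not anticipate a serious technical obstacle beyond verifying that the generating set can be chosen inside $\sS$ itself, which is a standard replacement argument using only that each generator is a finite combination of the original family.
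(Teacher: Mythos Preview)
Your argument is correct and is essentially the same idea as the paper's: encode the degree $e$ as a monomial in $\nu$ auxiliary variables, invoke Noetherianity to extract a finite generating set from $\sS$, and then read off the coefficient of the top monomial to get the constrained decomposition. The only cosmetic difference is that the paper adjoins the extra variables as \emph{analytic} variables (working in $\sA_{\nu+N}^n$ and differentiating at $t=0$), whereas you adjoin them as \emph{polynomial} indeterminates (working in $\sA_N[y_1,\ldots,y_\nu]^n$ and equating coefficients); your route is arguably a touch cleaner, since the coefficient extraction is purely formal and Noetherianity follows from Hilbert's basis theorem rather than from the Weierstrass preparation theorem for $\sA_{\nu+N}$.
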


\begin{cor}\label{CorNoeFiniteType}
Let $\sS\subseteq \sA_n^n\times \N^{\nu}$. We think of $\sS$ as a
set of pairs $\q( X,d\w)$ where $X$ is the germ of a real analytic
vector field, and $d\in \N^{\nu}$ is a formal degree.
  Then, there exists a
finite subset $\sF\subseteq \sS$ such that every $\q( Y,e\w)\in \sS$
is controlled by $\sF$.
\end{cor}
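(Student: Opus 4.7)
The plan is to apply Theorem \ref{ThmFiniteGenControl} directly to $\sS$ and then translate the resulting algebraic identity into the analytic control condition of Definition \ref{DefnFiniteSetControl}. Viewing $\sS\subseteq \sA_n^n\times \N^\nu$ as in that theorem, I obtain a finite subset $\sF=\q\{\q(X_1,d_1\w),\ldots,\q(X_q,d_q\w)\w\}\subseteq \sS$ such that for every $\q(Y,e\w)\in \sS$ there are real analytic coefficients $c_j^{Y,e}\in \sA_n$ (with $c_j^{Y,e}\equiv 0$ whenever $d_j\not\leq e$) and a neighborhood $\Omega_{Y,e}$ of $0\in \R^n$ on which
\begin{equation*}
Y\q(x\w) = \sum_{j=1}^{q} c_j^{Y,e}\q(x\w)\, X_j\q(x\w).
\end{equation*}

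Fix $\q(Y,e\w)\in \sS$. To produce the data required by Definition \ref{DefnFiniteSetControl} with $\q(X_0,d_0\w)=\q(Y,e\w)$, I would set
\begin{equation*}
c_{x,j}^\delta\q(y\w):=\delta^{e-d_j}\, c_j^{Y,e}\q(y\w).
\end{equation*}
Multiplying the displayed identity by $\delta^e$ and regrouping gives $\delta^e Y\q(y\w)=\sum_{j=1}^{q} c_{x,j}^\delta\q(y\w)\,\delta^{d_j}X_j\q(y\w)$, which is precisely the first bullet of Definition \ref{DefnFiniteSetControl}. I would then choose $U\ni 0$ and $\tau_1>0$ (depending on $\q(Y,e\w)$) small enough that $\B{X}{d}{x}{\tau_1\delta}\subseteq \Omega_{Y,e}$ for all $x\in U$ and $\delta\in \q[0,1\w]^\nu$; this is possible because the Euclidean diameter of $\B{X}{d}{x}{\tau_1\delta}$ is $O\q(\tau_1\w)$ uniformly over $\delta\in \q[0,1\w]^\nu$, so these balls shrink into $\Omega_{Y,e}$ as $\tau_1\to 0$.

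For the uniform $C^m$ bound in the second bullet, observe that each $c_j^{Y,e}$ is real analytic, hence $C^\infty$ with derivatives of all orders uniformly bounded on a compact neighborhood of $0$ inside $\Omega_{Y,e}$. Each factor $\delta^{d_k}X_k$ appearing in $\q(\delta X\w)^\alpha$ contributes a scalar prefactor $\delta^{d_k}$ together with a first-order $C^\infty$ operator whose coefficients are independent of $\delta$, so schematically
\begin{equation*}
\q(\delta X\w)^\alpha c_{x,j}^\delta\q(y\w) = \delta^{\q(e-d_j\w)+\sum_l \alpha_l d_l}\cdot \q(\text{fixed smooth differential expression of order }|\alpha|\text{ in }c_j^{Y,e}\w).
\end{equation*}
Since $d_j\leq e$ and each $d_l\geq 0$, the exponent $\q(e-d_j\w)+\sum_l \alpha_l d_l\in \N^\nu$ has nonnegative coordinates, so $\delta^{\q(\cdot\w)}\leq 1$ on $\q[0,1\w]^\nu$, giving the desired uniform $C^0$ bound for every $m$. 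The only real obstacle is the bookkeeping to confirm that the CC balls stay inside the domain of analyticity; all the substantive content sits inside the Noetherian-type Theorem \ref{ThmFiniteGenControl}, whose proof uses the coherence of the sheaf of real analytic functions.
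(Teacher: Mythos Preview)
Your proposal is correct and follows essentially the same route as the paper: apply Theorem~\ref{ThmFiniteGenControl} to obtain the finite set $\sF$, multiply the resulting identity by $\delta^{e}$, and observe that the coefficients $\delta^{e-d_j}c_j^{Y,e}$ are $C^\infty$ uniformly in $\delta\in[0,1]^\nu$ because $e-d_j\geq 0$ coordinatewise. The paper's proof is considerably terser---it simply notes that $\delta^{e-d}c_{(X,d)}\in C^\infty$ uniformly and says ``the result follows''---whereas you spell out the choice of $U$, $\tau_1$, and the $(\delta X)^\alpha$ bound explicitly; but the substance is identical. One minor remark: you attribute Theorem~\ref{ThmFiniteGenControl} to coherence of the real analytic sheaf, while the paper derives it from the Noetherian property of $\sA_{N}^m$ via an embedding $\iota(f,d)=t^d f(x)$ into $\sA_{\nu+N}^n$; these are closely related viewpoints, but the paper's argument is purely module-theoretic.
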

\begin{proof}
Let $\sF$ be as in the conclusion of Theorem
\ref{ThmFiniteGenControl} when applied to $\sS$.  Let $\q( Y,e\w)\in
\sS$.  We wish to show that $\q( Y,e\w)$ is controlled by $\sF$. For
$\delta\in \q[0,1\w]^\nu$, multiplying both sides of
\eqref{EqnFiniteGenControl} by $\delta^{e}$, we obtain,
\begin{equation*}
\delta^e Y = \sum_{\substack{\q( X,d\w)\in \sF\\ d\leq e}}
\q(\delta^{e-d} c_{\q(X,d\w)}\w) \delta^d X.
\end{equation*}
Noting that $\delta^{e-d} c_{\q(X,d\w) }\in C^\infty$ uniformly for
$\delta\in \q[0,1\w]^\nu$ (since we are only considering the case
when $d\leq e$ coordinatewise), the result follows.
\end{proof}

The above three results are the only results we will need concerning
real analytic functions.  The rest of this section is devoted to
proving and discussing Theorems \ref{ThmTaylorPrep} and
\ref{ThmFiniteGenControl}.  The reader uninterested in the proofs
may safely skip the remainder of this section, as it will not be
used in the sequel.

Theorems \ref{ThmTaylorPrep} and \ref{ThmFiniteGenControl} both
follow easily from well-known results.  We begin by outlining the
results necessary to prove Theorem \ref{ThmFiniteGenControl}.

\begin{prop}[See
\cite{ZariskiSamuelCommutativeAlgebraII}]\label{PropNoeRing}
The
ring $\sA_N$ is Noetherian.
\end{prop}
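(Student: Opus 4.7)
The plan is to prove $\sA_N$ is Noetherian by induction on $N$, using the Weierstrass Preparation and Division Theorems as the main analytic inputs. The base case $N=0$ is immediate since $\sA_0=\R$ is a field. For the inductive step, assume $\sA_{N-1}$ is Noetherian, and let $I\subseteq \sA_N$ be an arbitrary ideal; we must show $I$ is finitely generated.

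The first step is reduction to a Weierstrass polynomial. If $I=(0)$ we are done, so pick a nonzero $f\in I$. After a generic linear change of coordinates on $\R^N$, we may assume $f$ is regular in $x_N$ of some order $d$, i.e., $f(0,\ldots,0,x_N)=cx_N^d+O(x_N^{d+1})$ with $c\ne 0$. The Weierstrass Preparation Theorem then gives $f=u\cdot P$ with $u\in \sA_N$ a unit and $P\in \sA_{N-1}[x_N]$ a distinguished (monic) polynomial of degree $d$, so $(f)=(P)\subseteq I$.

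The second step uses the Weierstrass Division Theorem: every $g\in \sA_N$ can be written uniquely as $g=qP+r$ with $q\in \sA_N$ and $r\in \sA_{N-1}[x_N]$ of degree $<d$. Consequently the quotient $\sA_N/(P)$ is generated as an $\sA_{N-1}$-module by the images of $1,x_N,\ldots,x_N^{d-1}$, so it is a finitely generated module over the Noetherian ring $\sA_{N-1}$ (by induction). Hence $\sA_N/(P)$ is a Noetherian $\sA_{N-1}$-module, and a fortiori a Noetherian $\sA_N$-module.

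The final step is then formal: the image $I/(P)$ is an $\sA_N$-submodule of the Noetherian module $\sA_N/(P)$, hence is finitely generated by some $\overline{g_1},\ldots,\overline{g_m}$. Then $I=(P,g_1,\ldots,g_m)$ is finitely generated. The main obstacle is purely the Weierstrass machinery in step one; once that is invoked, the descent to $\sA_{N-1}$ and the module-theoretic passage from finite generation of $\sA_N/(P)$ to finite generation of $I$ is standard commutative algebra, which is why the proposition is stated with a citation rather than a proof.
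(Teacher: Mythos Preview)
Your argument is correct and matches the paper's approach: the paper does not give a proof but simply cites Zariski--Samuel and notes that the result is a consequence of the Weierstrass preparation theorem, which is precisely the inductive argument you have written out. Your proposal is the standard proof one finds in that reference (adapted to convergent rather than formal power series, which the paper also remarks works verbatim).
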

\begin{proof}[Comments on the proof]
This is a simple consequence of the Weierstrass preparation theorem.
See page 148 of \cite{ZariskiSamuelCommutativeAlgebraII}.  The proof
in \cite{ZariskiSamuelCommutativeAlgebraII} is for the formal power
series ring, however, as mentioned on page 130 of
\cite{ZariskiSamuelCommutativeAlgebraII}, the proof also works for
the ring convergent power series:  i.e., the ring of power series
with some positive radius of convergence.  The ring of germs of real
analytic functions is isomorphic to the ring convergent power
series.
\end{proof}

\begin{prop}\label{PropNoeMod}
The module $\sA_N^m$ is a Noetherian $\sA_N$-module.
\end{prop}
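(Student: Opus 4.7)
The plan is to prove Proposition \ref{PropNoeMod} by induction on $m$, using only Proposition \ref{PropNoeRing} together with the standard three-term exact sequence argument from commutative algebra.

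The base case $m=1$ is immediate: $\sA_N^1 = \sA_N$, and a ring is Noetherian as a module over itself precisely when its ideals satisfy the ascending chain condition, which is the content of Proposition \ref{PropNoeRing}.

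For the inductive step, I will use the standard lemma that in a short exact sequence of $\sA_N$-modules
\begin{equation*}
0\to M'\to M\to M''\to 0,
\end{equation*}
the middle term $M$ is Noetherian if and only if both $M'$ and $M''$ are Noetherian. (The forward implication is the fact that submodules and quotients of Noetherian modules are Noetherian; the reverse implication is the usual argument: given an ascending chain in $M$, its images in $M''$ stabilize and its intersections with $M'$ stabilize, which together force the chain itself to stabilize.) Applying this lemma to the split exact sequence
\begin{equation*}
0 \to \sA_N \to \sA_N^m \to \sA_N^{m-1} \to 0,
\end{equation*}
where the first map is inclusion into the last coordinate and the second is projection onto the first $m-1$ coordinates, and invoking the inductive hypothesis on $\sA_N^{m-1}$ along with the base case on $\sA_N$, we conclude that $\sA_N^m$ is a Noetherian $\sA_N$-module.

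This argument is entirely formal commutative algebra once Proposition \ref{PropNoeRing} is in hand; there is no real obstacle, and the only thing to be slightly careful about is simply citing (or briefly sketching) the three-term exact sequence lemma, since it is the one nontrivial ingredient beyond the ring-theoretic Noetherian property.
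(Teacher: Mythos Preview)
Your proof is correct and is precisely the standard argument behind the paper's one-line observation that for any Noetherian ring $R$, the module $R^m$ is Noetherian. The paper does not spell out the details at all, so your inductive short-exact-sequence argument is simply a fleshed-out version of what the authors leave to the reader.
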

\begin{proof}[Comments on the proof]
It is easy to see that for any Noetherian ring $R$, the $R$-module
$R^m$ is Noetherian.  Actually, in this special case, one can
characterize a finite set of generators for any submodule of
$\sA_N^m$.  This can be found in
\cite{GalligoTheoremeDeDivisionEtStabiliteEnGeometrieAnalytiqueLocale},
but we will not need this.
\end{proof}

\begin{proof}[Proof of Theorem \ref{ThmFiniteGenControl}]
Let $\sS\subseteq \sA_N^n\times \N^{\nu}$.  Define a map $\iota:
\sA_N^n\times \N^{\nu}\rightarrow \sA_{\nu+N}^n$ by,
\begin{equation*}
\iota\q( f,d\w) = t^d f\q( x\w), \quad t\in \R^\nu.
\end{equation*}
Let $M$ be the submodule of $\sA_{\nu+N}^n$ generated by $\iota
\sS$.  $M$ is finitely generated by Proposition \ref{PropNoeMod}.
Let $\sF\subseteq \sS$ be a finite subset such that $\iota \sF$
generates $M$.  We will show that $\sF$ satisfies the conclusions of
the theorem.

Indeed, let $\q( g,e\w)\in \sS$.  Since $t^e g\q( x\w)\in M$, we may
write,
\begin{equation}\label{EqnOneLevelUpNoe}
t^{e} g\q( x\w) = \sum_{\q( f,d\w)\in \sF} \ch_{\q( f,d\w)}\q(
t,x\w) t^{d} f\q(x\w),
\end{equation}
on a neighborhood of $\q(0,0\w)\in \R^{\nu}\times \R^N$.

We apply $\frac{1}{e!}\frac{\partial}{\partial t}^e\big|_{t=0}$ to
both sides of \eqref{EqnOneLevelUpNoe}.  Note that,
\begin{equation*}
\frac{1}{e!}\frac{\partial}{\partial t}^e\bigg|_{t=0}\ch_{\q(
f,d\w)}\q( t,x\w) t^{d} =0, \quad \text{unless } e\geq d.
\end{equation*}
Thus, we obtain,
\begin{equation*}
g\q( x\w) = \sum_{\substack{\q( f,d\w)\in \sF\\ d\leq e}}
\q[\frac{1}{e!}\frac{\partial}{\partial t}^e\bigg|_{t=0}\ch_{\q(
f,d\w)}\q( t,x\w) t^{d}\w] f\q(x\w),
\end{equation*}
completing the proof.
\end{proof}

\begin{rmk}\label{RmkLobry}
The case $\nu=0$ of Corollary \ref{CorNoeFiniteType} in the context
of vector fields seems to have been first used by Lobry
\cite{LobryControlabiliteDesSystemesNonLinearies}.  In
\cite{LobryControlabiliteDesSystemesNonLinearies}, Corollary
\ref{CorNoeFiniteType} was used in the following way.  Let $\sS$ be
a set of germs of real analytic vector fields, and let $\sD$ be the
involutive distribution generated by $\sS$.  In light of the $\nu=0$
case of Corollary \ref{CorNoeFiniteType}, there is a finite subset
$\sF\subseteq \sD$ such that for every $Y\in \sD$, $Y$ can be
written as a sum of elements of $\sF$ (on some suitably small open
set, depending on $Y$). Because of this,
\cite{LobryControlabiliteDesSystemesNonLinearies} said the
distribution $\sD$ was ``locally of finite type.''  
Unfortunately, there is a slight error in the application
the Frobenius theorem in \cite{LobryControlabiliteDesSystemesNonLinearies},
see \cite{StefanIntegrabilityOfSystemsOfVectorFields}--this is due
to the fact that the open set depends on $Y$.
However, in our uses of the Frobenius theorem, we will always 
be able to consider only {\it finite}
sets $\sS$, and then it is easy to see that the open set need not
depend on $Y$.
Corollary \ref{CorNoeFiniteType} can
be considered a ``scale invariant'' version of the ideas of \cite{LobryControlabiliteDesSystemesNonLinearies}.
\end{rmk}

We close this section by proving Theorem \ref{ThmTaylorPrep}.  
We will need a Weierstrass-type preparation theorem
from
\cite{GalligoTheoremeDeDivisionEtStabiliteEnGeometrieAnalytiqueLocale}.
First, we introduce the relevant aspects of
\cite{GalligoTheoremeDeDivisionEtStabiliteEnGeometrieAnalytiqueLocale}
we need (which is only a small fraction of that paper), and then we
will show that Theorem \ref{ThmTaylorPrep} is a simple consequence.
We will only need part of Theorem 1.2.5 of
\cite{GalligoTheoremeDeDivisionEtStabiliteEnGeometrieAnalytiqueLocale},
and we turn to introducing the relevant notation. We will introduce
a division theorem for functions in $\sA_\nu^m$ (of course division
theorems are closely related to preparation theorems). Pick numbers
$\lambda_1,\ldots,\lambda_\nu\in \q(0,\infty\w)$ such that
$\lambda_1,\ldots, \lambda_\nu$ are linearly independent over $\Z$.
For $\alpha=\q(\alpha_1,\ldots, \alpha_\nu\w)\in \N^\nu$, define
$L\q(\alpha\w)=\sum_{j=1}^\nu \alpha_j \lambda_j\in \q[0,\infty\w)$.
Note that if $\alpha\ne \beta$, then $L\q(\alpha\w)\ne
L\q(\beta\w)$. $L$ induces a total ordering on the set
$\N^{\nu}\times \mset$. Indeed, we say
$\q(\alpha,i\w)<\q(\beta,j\w)$ if $L\q(\alpha\w)<L\q(\beta\w)$ or if
$L\q(\alpha\w)=L\q(\beta\w)$ and $i<j$.

For a function $f\in \sA_{\nu}^m$ write $f=\q(f_1,\ldots, f_m\w)$,
where $f_i\in \sA_\nu$.  Write $f_i$ as a Taylor series,
\begin{equation*}
f_i\q(x\w)=\sum_{\alpha\in \N^{\nu}} f_{\alpha,i} x^{\alpha}.
\end{equation*}
Let $Q\q( f\w)$ be the Newton diagram of $f$:
\begin{equation*}
Q\q(f\w)=\q\{\q(\alpha,i\w)\in \N^\nu\times \mset : f_{\alpha,i}\ne
0\w\}.
\end{equation*}
For $f\ne 0$, let $\expL{f}$ to be the smallest element of
$Q\q(f\w)$ in the above defined total ordering. Let $M$ be a
submodule of $\sA_\nu^m$.  We define,
\begin{equation*}
E_L\q(M\w) = \q\{\expL{f} : 0\ne f\in M\w\}.
\end{equation*}

\begin{thm}[Part of Theorem 1.2.5 of
\cite{GalligoTheoremeDeDivisionEtStabiliteEnGeometrieAnalytiqueLocale}]\label{ThmGalligo}
Let $M$ be a submodule of $\sA_\nu^m$.  Then, every element $f\in
\sA_\nu^m$ is congruent, modulo $M$, to a unique element
$r=r\q(f\w)=\q(r_1,\ldots, r_m\w)\in \sA_\nu^m$ of the form,
\begin{equation}\label{EqnGalligoRemainder}
r_i\q(x\w) = \sum_{\q(\alpha,i\w)\not\in E_L\q(M\w)} h_{\alpha,i}
x^{\alpha}.
\end{equation}
That is, the nonzero terms in the Taylor series of $r$ do not appear
in $E_L\q(M\w)$.
\end{thm}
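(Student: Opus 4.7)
The plan is to adapt the classical Weierstrass division strategy to the module setting, following the Grauert--Hironaka--Galligo standard basis technique. I would split the argument into a short combinatorial uniqueness step and a longer existence step whose final hurdle is analytic convergence.

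For uniqueness: suppose $r$ and $r'$ are two remainders of the stipulated form. Then $r - r' \in M$ and its Newton diagram $Q(r-r')$ is contained in the complement of $E_L(M)$. If $r - r' \neq 0$, then by definition $\expL{r-r'} \in E_L(M)$, which contradicts $\expL{r-r'} \in Q(r-r')$. Hence $r = r'$. This step uses only that the ordering $<$ induced by $L$ is a well-ordering on $\N^\nu \times \mset$, which in turn follows from the $\Z$-linear independence of $\lambda_1, \ldots, \lambda_\nu$ (distinctness of the values of $L$) together with $L \geq 0$.

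For existence I would proceed in two stages. First, at the formal level, I would select a finite \emph{standard basis} $g_1, \ldots, g_s \in M$ and a partition $E_L(M) = \Delta_1 \sqcup \cdots \sqcup \Delta_s$ where $\Delta_l = \expL{g_l} + \N^{\nu} \times \{i_l\}$ (possibly after pruning to avoid overlaps); the existence of such a finite decomposition uses Dickson's lemma applied to the order ideal $E_L(M) \subseteq \N^\nu \times \mset$. Given $f$, I would then run an iterative division: at each step locate the $<$-smallest exponent $(\alpha, i) \in Q(f)$; if $(\alpha, i) \in \Delta_l$ for some $l$, cancel that coefficient by subtracting an appropriate monomial multiple of $g_l$ (absorbing the multiple into the quotient $q_l$); otherwise move the coefficient into the remainder $r$. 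Because cancellation only introduces exponents that are strictly $<$-larger, a well-ordering/transfinite induction argument shows this procedure terminates in a formal expression $f = \sum_l q_l g_l + r$ with $Q(r) \cap E_L(M) = \emptyset$.

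The main obstacle is the second stage: upgrading this formal identity to convergent power series, i.e.\ showing $q_l, r \in \sA_\nu$ when $f \in \sA_\nu^m$. This is the analytic heart of the statement and is precisely what distinguishes Galligo's division theorem from its formal power series analogue. The strategy is to track majorant series for the Taylor coefficients produced at each step: one shows that, after rescaling the variables by a sufficiently small factor (depending on $M$ and the chosen standard basis), each cancellation step contracts the appropriate weighted norm by a definite factor, because the exponent $\alpha - \expL{g_l}$ lies in $\N^\nu$ and $L$ is strictly positive on nonzero elements. Summing a geometric series then controls $q_l$ and $r$ on a common polydisk of positive radius. I would expect this majorant estimate, and the careful choice of the standard basis that makes it work uniformly, to constitute essentially all the real work; the combinatorial skeleton above is straightforward.
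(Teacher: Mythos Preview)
The paper does not prove this theorem at all: it is quoted as ``Part of Theorem 1.2.5'' of Galligo's paper and used as a black box in the proof of Theorem~\ref{ThmTaylorPrep}. There is therefore no in-paper proof to compare your proposal against.

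That said, your outline is the standard Grauert--Hironaka--Galligo argument and is correct in structure. A few small comments. Your uniqueness paragraph is fine, but the well-ordering of $\N^\nu\times\{1,\ldots,m\}$ under $<$ really comes from the positivity of the $\lambda_j$ (so that $\{\alpha:L(\alpha)\le B\}$ is finite for every $B$), not from their $\Z$-linear independence; the latter is what makes $L$ injective and hence $<$ a total order. In the existence step, the partition $E_L(M)=\Delta_1\sqcup\cdots\sqcup\Delta_s$ with $\Delta_l\subset\expL{g_l}+\N^\nu\times\{i_l\}$ is exactly Galligo's ``escalier'' decomposition, and Dickson's lemma is the right tool. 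Your identification of the analytic convergence estimate as the real content is accurate; that majorant argument (controlling the quotients and remainder on a common polydisc after a rescaling) is precisely what Galligo supplies and is where all the work lies.
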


\begin{proof}[Proof of Theorem \ref{ThmTaylorPrep}]
First we prove the result without insisting the $c_{\alpha_k}$
satisfy \eqref{EqnTaylorPrepGoodCoef}.  Then we will show that we
may modify the $c_{\alpha_k}$ so that \eqref{EqnTaylorPrepGoodCoef}
is satisfied.

Express $f$ as a Taylor series as in
\eqref{EqnTaylorPrepTaylorSeries}:
\begin{equation*}
f\q(t,x\w)= \sum_{\alpha\in \N^{N}} t^{\alpha} f_\alpha\q(x\w).
\end{equation*}
Let $M$ be the submodule of $\sA_{N+n}^m$ generated by
$\q\{t^{\alpha} f_\alpha\q(x\w) : \alpha\in \N^N\w\}$.  We know that
$M$ is finitely generated by Proposition \ref{PropNoeMod}, and thus
\eqref{EqnTaylorPrepFiniteSum} will follow if we can show $f\in M$.

Taking the setup of Theorem \ref{ThmGalligo} (and thus we must
choose some $L$), we see that we may write $f$ uniquely modulo $M$
as a term $r$ satisfying \eqref{EqnGalligoRemainder}.  We wish to
show that $r=0$.  Suppose not.  We will show that $\expL{r}\in
E_L\q(M\w)$, which will contradict the form of $r$ given by
\eqref{EqnGalligoRemainder}.

Note that $r=m+f$ for some $m\in M$.  We claim that there exists
$K>0$ sufficiently large such that,
\begin{equation*}
\expL{m+f}=\expL{m+\sum_{\q|\alpha\w|\leq K} t^{\alpha}
f_{\alpha}\q( x\w) }.
\end{equation*}
Indeed, if $\q|\alpha\w|$ is so large that\footnote{Here, we are
thinking of $\q(\alpha,0\w)\in \N^N\times \N^n$.  Also, when we write $L\q(\expL{m+f} \w)$, we are dropping off the last coordinate of $\expL{m+f}$ so that the expression makes sense.}
$L\q(\alpha,0\w)>L\q(\expL{m+f}\w)$, then the term $t^{\alpha}
f_{\alpha}\q(x\w)$ does not affect $\expL{m+f}$.

Note, though, that $m+\sum_{\q|\alpha\w|\leq K} t^{\alpha}
f_{\alpha}\q( x\w)\in M$.  Thus, by definition,
$\expL{r}=\expL{m+f}\in E_L\q(M\w)$.  This achieves the
contradiction and completes the proof of
\eqref{EqnTaylorPrepFiniteSum}.

Now we turn to showing that the $c_{\alpha_k}$ may be modified so
that they satisfy \eqref{EqnTaylorPrepGoodCoef}.  Indeed, suppose
$c_{\alpha_k}$ satisfy \eqref{EqnTaylorPrepFiniteSum}.  Define
$\ch_{\alpha_k}$ by,
\begin{equation*}
t^{\alpha_k} \ch_{\alpha_k}\q( t,x\w) = t^{\alpha_k} c_{\alpha_k}\q(
t,x\w) -\sum_{j=1}^r \frac{t^{\alpha_j}}{\alpha_j!}
\q[\frac{\partial}{\partial s}^{\alpha_j}\bigg|_{s=0} s^{\alpha_k}
c_{\alpha_k}\q( s,x\w)\w] + t^{\alpha_k};
\end{equation*}
note that the right hand side is clearly of the form
$t^{\alpha_k}\ch_{\alpha_k}$ for some $\ch_{\alpha_k}$, since
$\frac{\partial}{\partial s}^{\alpha_j}\big|_{s=0} s^{\alpha_k}
\ch_{\alpha_k}\q( s,x\w)=0$ unless $\alpha_k\leq \alpha_j$
coordinatewise.

It is clear that $\ch_{\alpha_k}$ satisfies
\eqref{EqnTaylorPrepGoodCoef}.  Thus, to complete the proof, we need
only show that,
\begin{equation}\label{EqnTaylorPrepToShow}
\sum_{j=1}^r \ch_{\alpha_j}\q( t,x\w) t^{\alpha_j} f_{\alpha_j}\q(
x\w) = f\q( t,x\w).
\end{equation}
Since \eqref{EqnTaylorPrepToShow} holds with $\ch_{\alpha_j}$
replaced by $c_{\alpha_j}$, it suffices to show,
\begin{equation}\label{EqnTaylorPrepToShow2}
\sum_{k=1}^r t^{\alpha_k} f_{\alpha_k}\q( x\w) = \sum_{k=1}^r
\q(\sum_{j=1}^r \frac{t^{\alpha_j}}{\alpha_j!}
\q[\frac{\partial}{\partial s}^{\alpha_j}\bigg|_{s=0} s^{\alpha_k}
c_{\alpha_k}\q( s,x\w)\w]\w)f_{\alpha_k}\q( x\w).
\end{equation}
In light of \eqref{EqnTaylorPrepFiniteSum}, both sides of
\eqref{EqnTaylorPrepToShow2} are equal to,
\begin{equation*}
\sum_{j=1}^r \frac{t^{\alpha_j}}{\alpha_j!} \frac{\partial}{\partial
s}^{\alpha_j} \bigg|_{s=0} f\q( s,x\w).
\end{equation*}
This completes the proof.
\end{proof}

\section{Reduction to the $C^\infty$ case}\label{SectionReduction}
In this section, we use the results from Section
\ref{SectionRealAnal} to reduce Theorems \ref{ThmMainSingIntThm} and
\ref{ThmRealAnalMax} to 
theorems about $C^\infty$ $\gamma$:  namely, 
Theorems \ref{ThmSSLpBoundedness} and \ref{ThmMoreGenMaxFunc}.
In Section \ref{SectionReductionSingInt}, we reduce
Theorem \ref{ThmMainSingIntThm} to Theorem \ref{ThmSSLpBoundedness}
(or, more precisely, Proposition \ref{PropFiniteTypeAuto}),
while in Section \ref{SectionReductionMaxInt}
we reduce Theorem \ref{ThmRealAnalMax}
to Theorem \ref{ThmMoreGenMaxFunc}.

	\subsection{Singular Radon Transforms}\label{SectionReductionSingInt}
	In this section, we will show that Theorem \ref{ThmRealAnalMax}
follows from Theorem \ref{ThmSSLpBoundedness}.
In fact, as shown in Section \ref{SectionSSLp},
it suffices to prove Proposition \ref{PropFiniteTypeAuto}:  that
(\ref{ItemCondWithW}.F)
and (\ref{ItemCondFinal}.F) hold automatically when $\gamma$
is real analytic.  Furthermore, since Proposition \ref{PropCondWithWEquivCondFinal}
shows that (\ref{ItemCondWithW}.F) and (\ref{ItemCondFinal}.F) are equivalent,
it suffices to show that 
(\ref{ItemCondWithW}.F) holds whenever $\gamma$
is real analytic.

\begin{lemma}\label{LemmaRealAnalAlwaysGeneratesFiniteList}
Let $\sF=\q\{\q(X_1,d_1\w),\ldots, \q(X_r,d_r\w)\w\}$ be a finite set
of germs of real analytic vector fields (defined on a neighborhood of $0$),
each paired with formal degree $0\ne d_j\in \N^\nu$.  Then
$\sF$ generates a finite list, as in Definition \ref{DefnGeneratesAFiniteList}:
that is, there is a finite set of elements $\q(X_{r+1},d_{r+1}\w),\ldots,
\q(X_q,d_q\w)\in \sL_0\q(\sF\w)$ such that for every
$1\leq i,j\leq q$,
$\q(\q[X_i,X_j\w],d_i+d_j\w)$ is controlled by
$\q(X_1,d_1\w),\ldots, \q(X_q,d_q\w)$.
\end{lemma}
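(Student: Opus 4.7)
My plan is to combine Corollary \ref{CorNoeFiniteType} with Remark \ref{RmkJacobi}: first produce a finite list controlling every iterated Lie bracket of $\sF$, then push that list back inside $\sL_0\q(\sF\w)$ so that it meets the form demanded by Definition \ref{DefnGeneratesAFiniteList}.

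First I would observe that $\sL\q(\sF\w)\subseteq \sA_n^n\times \N^\nu$: since $\sA_n$ is a ring closed under differentiation, iterated Lie brackets of real analytic vector fields are themselves real analytic, and the formal degrees, being $\N^\nu$-sums of the $d_j$, lie in $\N^\nu$. Applying Corollary \ref{CorNoeFiniteType} to the (possibly infinite) set $\sL\q(\sF\w)$ then produces a finite subset $\sF_1\subseteq \sL\q(\sF\w)$ such that every element of $\sL\q(\sF\w)$ is controlled by $\sF_1$.

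Next, each $\q(Y,d_0\w)\in \sF_1$ lies in $\sL\q(\sF\w)$, so by Remark \ref{RmkJacobi}, $Y$ is an $\R$-linear combination of vector fields $Y'$ with $\q(Y',d_0\w)\in \sL_0\q(\sF\w)$. Collecting these $Y'$'s across all finitely many members of $\sF_1$ yields a finite set $\sF_2\subseteq \sL_0\q(\sF\w)$; since the coefficients of the linear combinations are constants, $\sF_2$ automatically controls $\sF_1$. Now take the enlarged list $\sF_3=\sF\cup \sF_2\subseteq \sL_0\q(\sF\w)$ (using $\sF\subseteq \sL_0\q(\sF\w)$) and label it $\q(X_1,d_1\w),\ldots, \q(X_q,d_q\w)$ with the original $r$ entries listed first, so that the added elements $\q(X_{r+1},d_{r+1}\w),\ldots,\q(X_q,d_q\w)$ lie in $\sL_0\q(\sF\w)$ as required. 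For any $1\leq i,j\leq q$ we have $X_i,X_j\in \sL_0\q(\sF\w)\subseteq \sL\q(\sF\w)$, and $\sL\q(\sF\w)$ is closed under brackets, so $\q(\q[X_i,X_j\w],d_i+d_j\w)\in \sL\q(\sF\w)$; hence it is controlled by $\sF_1$, which is controlled by $\sF_2\subseteq \sF_3$, and transitivity (Lemma \ref{LemmaControlTransitive}) gives that it is controlled by $\sF_3$.

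The essential move is the application of Corollary \ref{CorNoeFiniteType} to the infinite set $\sL\q(\sF\w)$; once that is granted, the only remaining subtlety is that the generated list must live inside $\sL_0\q(\sF\w)$ rather than merely inside $\sL\q(\sF\w)$, and this is precisely the role of the Jacobi identity as packaged in Remark \ref{RmkJacobi}. I do not foresee a serious obstacle here, since the ingredients have all been developed in Section \ref{SectionRealAnal} and Section \ref{SectionSSLp}.
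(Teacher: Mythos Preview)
Your proof is correct and follows essentially the same route as the paper: apply Corollary \ref{CorNoeFiniteType} to $\sL\q(\sF\w)$, use Remark \ref{RmkJacobi} to replace the resulting finite set by one lying in $\sL_0\q(\sF\w)$, adjoin $\sF$, and then observe that brackets of elements of the new list lie in $\sL\q(\sF\w)$ and are therefore controlled. The only cosmetic difference is that the paper compresses your $\sF_1,\sF_2,\sF_3$ into a single $\sF_0$ and handles the transitivity step implicitly, whereas you spell out the intermediate sets and invoke Lemma \ref{LemmaControlTransitive} explicitly.
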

\begin{proof}
Apply Corollary \ref{CorNoeFiniteType} to 
$\sL\q(\sF\w)$ to obtain a finite set $\sF_0\subseteq \sL\q(\sF\w)$
such that $\sF_0$ controls every element of $\sL\q(\sF\w)$.
By Remark \ref{RmkJacobi}, we may assume $\sF_0\subseteq \sL_0\q(\sF\w)$.
We may, without loss of generality, replace $\sF_0$ with $\sF_0\cup \sF$.
We claim $\sF_0$ is the desired set $\q(X_1,d_1\w),\ldots, \q(X_q,d_q\w)$.
Indeed, it only remains to show that
for every $\q(Y_1,f_1\w),\q(Y_2,f_2\w)\in \sF_0$, $\q(\q[Y_1,Y_2\w],f_1+f_2\w)$
is controlled by $\sF_0$.  Since $\q(\q[Y_1,Y_2\w],f_1+f_2\w)\in \sL\q(\sF\w)$, this follows
by the definition of $\sF_0$, completing the proof.
\end{proof}

\begin{proof}[Proof of Proposition \ref{PropFiniteTypeAuto}]
We take $W$ as in Section \ref{SectionSSLp}.  Since we are assuming
$\gamma$ is real analytic, $W$ is real analytic.
We express $W$ as a Taylor series in the $t$ variable:
\begin{equation*}
W\q(t\w) = \sum_{\q|\alpha\w|>0} t^{\alpha}\Xh_\alpha,
\end{equation*}
where the $\Xh_\alpha$ are real analytic vector fields.
The goal is to show that there is a finite set,
\begin{equation}\label{EqnToShowFiniteSet}
\sF\subseteq \q\{\q(\Xh_\alpha,\deg\q(\alpha\w)\w)\w\},
\end{equation}
such that $\sF$ generates a finite list, and this finite list controls $W$.
Since the vector fields are real analytic, Lemma \ref{LemmaRealAnalAlwaysGeneratesFiniteList}
shows that $\sF$ automatically generates a finite list.  Thus, it suffices
to show that there is a finite set $\sF$ as in \eqref{EqnToShowFiniteSet}
such that $\sF$ controls $W$.
We apply Theorem \ref{ThmTaylorPrep} to $W$ to show that there
exist $\alpha_1,\ldots, \alpha_r$ such that,
\begin{equation*}
W\q(t,x\w) = \sum_{j=1}^r c_j\q(t,x\w) t^{\alpha_j} X_{\alpha_j}\q(x\w).
\end{equation*}
From here, it is immediate to verify
that $\q(X_{\alpha_1},\deg\q(\alpha_1\w)\w),\ldots,\q(X_{\alpha_r},\deg\q(\alpha_r\w)\w)$ control $W$, completing the proof.
\end{proof}

We close this section by proving Proposition
\ref{PropClosedUnderComp}.  The main point is the following. Suppose
we are given $\nu_1$ parameter dilations on $\R^{N_1}$ and $\nu_2$
parameter dilations on $\R^{N_2}$, and suppose we are given
$\gamma_{t_j}^j\q(x\w):\R^{N_j}_0\times\R^n_0\rightarrow \R^n$,
$j=1,2$, germs of real analytic functions, satisfying the hypotheses
of Theorem \ref{ThmMainSingIntThm} (i.e,
satisfying (\ref{ItemCondFinal}.A)).  Proposition
\ref{PropClosedUnderComp} will follow if we show that
$\gamma_{t_1}^1\circ \gamma_{t_2}^2\q(x\w)$ and
$\q(\gamma_{t_1}^1\w)^{-1}\q(x\w)$ both satisfy the 
(\ref{ItemCondFinal}.A).
For $\gamma_{t_1}^1\circ
\gamma_{t_2}^2$ we are using the $\nu_1+\nu_2$ parameter dilations
on $\R^{N_1+N_2}$ given by, for $\q(\delta_1,\delta_2\w)\in
\q[0,1\w]^{\nu_1}\times \q[0,1\w]^{\nu_2}$ and $\q(t_1,t_2\w)\in
\R^{N_1}\times\R^{N_2}$,
$\q(\delta_1,\delta_2\w)\q(t_1,t_2\w)=\q(\delta_1 t_1,\delta_2
t_2\w)$. We write,
\begin{equation*}
\gamma_{t_j}^{j}\q(x\w)\sim \exp\q(\sum_{\q|\alpha\w|>0}
t_j^{\alpha} X_\alpha^j\w)x.
\end{equation*}
Lemma 9.3 of
\cite{ChristNagelSteinWaingerSingularAndMaximalRadonTransforms}
shows that,
\begin{equation*}
\q(\gamma_{t_1}^1\w)^{-1}\q(x\w) \sim \exp\q(\sum_{\q|\alpha\w|>0}
-t_j^{\alpha} X_\alpha^j\w)x.
\end{equation*}
The fact that $\q(\gamma_{t_1}^1\w)^{-1}$ satisfies 
(\ref{ItemCondFinal}.A)
now follows immediately.

We now turn to $\gamma_{t_1}^1\circ \gamma_{t_2}^2$. 
Define
\begin{equation*}
\sT=\sL\q( \q\{ \q(X_{\alpha_1}^1 ,\q(\alpha_1,0\w)\w),\q(X_{\alpha_2}^2,\q(0,\alpha_2\w)\w)\w\} \w).
\end{equation*}
 It follows from the Campbell-Hausdorff
formula (see Section 3 of
\cite{ChristNagelSteinWaingerSingularAndMaximalRadonTransforms})
that,
\begin{equation}\label{EqnG1circG1Asm}
\gamma_{t_1}^1\circ \gamma_{t_2}^2\q( x\w) \sim
\exp\q(\q[\sum_{\q|\alpha_1\w|>0} t_1^{\alpha_1}
X_{\alpha_1}^1\w]+\q[\sum_{\q|\alpha_2\w|>0} t_2^{\alpha_2}
X_{\alpha_2}^2\w] + \q[\sum_{\q|\beta_1\w|,\q|\beta_2\w|>0}
t_1^{\beta_1} t_2^{\beta_2} X_{\beta_1,\beta_2}\w] \w)x,
\end{equation}
where
$X_{\beta_1,\beta_2}\in \Span{ X' : \q(X',\q(\beta_1,\beta_2\w)\w)\in \sT }$. 

Define,
\begin{equation*}
\begin{split}
\sP_1&=\q\{\q(X_{\alpha_1}^{1},\q(\deg\q(\alpha_1\w),0\w)\w):
\deg\q(\alpha_1\w) \text{ is nonzero in only one component}\w\}\subseteq \sA_n^n\times \N^{\nu_1+\nu_2},\\
\sP_2&=\q\{\q(X_{\alpha_2}^{2},\q(0,\deg\q(\alpha_2\w)\w)\w):
\deg\q(\alpha_2\w) \text{ is nonzero in only one
component}\w\}\subseteq \sA_n^n\times \N^{\nu_1+\nu_2};
\end{split}
\end{equation*}
where $\deg\q(\alpha_1\w)$ is defined with the dilations on
$\R^{N_1}$ and $\deg\q(\alpha_2\w)$ is defined with the dilations on
$\R^{N_2}$. In light of \eqref{EqnG1circG1Asm} the vector fields
associated to the pure powers of $\gamma_{t_1}^1\circ
\gamma_{t_2}^2$ are given by $\sP_1\cup \sP_2$. 
Our assumption that $\gamma^1$ and $\gamma^2$ satisfy 
(\ref{ItemCondFinal}.A)
imply that $\q( X_{\alpha_1}^1,
\q(\deg\q(\alpha_1\w),0\w)\w)$ and $\q( X_{\alpha_2}^2,
\q(0,\deg\q(\alpha_2\w)\w)\w)$ are controlled by $\sL\q(\sP_1\cup \sP_2\w)$ 
for every $\alpha_1$ and $\alpha_2$.  Since
every element of $\sT$ is given by iterated commutators of
$X_{\alpha_1}^1$ and $X_{\alpha_2}^2$, it follows that for every
$\q( X,\q(\beta_1,\beta_2\w)\w)\in \sT$, $\sL\q(\sP_1\cup \sP_2\w)$ controls $\q(
X,\q(\deg\q(\beta_1\w),\deg\q(\beta_2\w)\w)\w)$.
Hence, $\sL\q(\sP_1\cup \sP_2\w)$ controls $\q( X_{\beta_1,\beta_2},
\q(\deg\q(\beta_1\w),\deg\q(\beta_2\w)\w)\w)$, 
for every $\beta_1$ and $\beta_2$.  Thus, $\gamma_{t_1}^1\circ
\gamma_{t_2}^2$ satisfies 
(\ref{ItemCondFinal}.A).
This completes the proof of Proposition
\ref{PropClosedUnderComp}.


    \subsection{Maximal Radon
    transforms}\label{SectionReductionMaxInt}
    In this section, we reduce Theorem \ref{ThmRealAnalMax} to Theorem
\ref{ThmMoreGenMaxFunc}.  The main tool will be Theorem
\ref{ThmTaylorPrep}.

Let $\gamma:\R_0^N\times \R_0^n\rightarrow \R^n$ be a germ of a real
analytic function satisfying $\gamma_0\q(x\w)\equiv x$.  For
$\delta=\q(\delta_1,\ldots, \delta_N\w)\in \q[0,1\w]^N$ and $t=\q(
t_1,\ldots, t_N\w)\in \R^N$, we define $\delta t=\q( \delta_1
t_1,\ldots, \delta_N t_N\w)$.  The goal is to study the maximal
operator,
\begin{equation*}
\sM f\q(x\w) = \sup_{\delta\in
\q[0,1\w]^N}\psi_1\q(x\w)\int_{\q|t\w|\leq a} \q|f\q( \gamma_{\delta
t}\q(x\w)\w)\w|\: dt.
\end{equation*}
Where $\psi_1\in C_0^\infty\q( \R^n\w)$ is supported on a small
neighborhood of $0$ and $\psi_1\geq 0$.  Let $\psi_2\in
C_0^\infty\q( \R^n\w)$, $\psi_2\geq 0$, with $\psi_2\equiv 1$ on a
neighborhood of the support of $\psi_1$.  We may assume $\psi_2$ has
small support, by shrinking the support of $\psi_1$.  By taking
$a>0$ small, we may ensure for $\q|t\w|<a$, and $x$ in the support
of $\psi_1$, we have $\psi_2\q( \gamma_t\q(x\w)\w)=1$.  With this
setup, define,
\begin{equation*}
\sM_0 f\q(x\w) = \sup_{j\in \N^{N}}\psi_1\q(x\w)\int_{\q|t\w|\leq a}
\q|f\q( \gamma_{2^{-j}t}\q(x\w)\w) \w| \psi_2\q(
\gamma_{2^{-j}t}\q(x\w)\w) \: dt.
\end{equation*}
It is easy to see that we have the pointwise inequality, $\sM
f\q(x\w) \lesssim \sM_0 f\q(x\w)$.  Thus, to prove Theorem
\ref{ThmRealAnalMax}, it suffices to prove $\sM_0$ is bounded on
$L^p$, $1<p\leq \infty$.

Define the real analytic vector field,
\begin{equation*}
W\q( t,x\w) = \frac{d}{d\epsilon}\bigg|_{\epsilon=1}
\gamma_{\epsilon t}\circ \gamma_t^{-1}\q(x\w)\in T_x\R^n.
\end{equation*}
Note that, for $j\in \N^N$,
\begin{equation}\label{EqnWDilated}
W\q( 2^{-j}t,x\w) = \frac{d}{d\epsilon}\bigg|_{\epsilon=1}
\gamma_{\epsilon 2^{-j}t}\circ \gamma_{2^{-j}t}^{-1}\q(x\w)\in
T_x\R^n.
\end{equation}
That is, replacing $\gamma_t$ with $\gamma_{2^{-j}t}$ changes
$W\q(t,x\w)$ to $W\q( 2^{-j} t,x\w)$.

Write,
\begin{equation*}
W\q(t,x\w)=\sum_{\q|\alpha\w|>0} t^{\alpha} X_\alpha.
\end{equation*}
 Applying Theorem
\ref{ThmTaylorPrep} to $W\q(t,x\w)$ we see that there exist
$\alpha_1,\ldots, \alpha_r$ and germs of real analytic functions
$c_{\alpha_l}$ such that,
\begin{equation}\label{EqnWDecomposed}
W\q(t,x\w) = \sum_{l=1}^r c_{\alpha_l}\q(t,x\w) t^{\alpha_l}
X_{\alpha_l}.
\end{equation}
Moreover, we may assume that the $c_{\alpha_l}$ satisfy
\eqref{EqnTaylorPrepGoodCoef}.

Let $\nu=N+r$.  We will define $\nu$-parameter dilations on $W$. For
$l=1,\ldots, r$, let $\hd_l\in \N^{r}$ be equal to $1$ in the $l$
component and $0$ in all other components.  Then, for
$\q(j_1,j_2\w)\in \Ninf^N\times \Ninf^{r}$, define,
\begin{equation*}
W_{\q(j_1,j_2\w)}\q(t,x\w)= \sum_{l=1}^r
c_{\alpha_l}\q(2^{-j_1}t,x\w) t^{\alpha_l} 2^{-j_2\cdot \hd_l}
X_{\alpha_l}.
\end{equation*}
Let $\gamma^{\q(j_1,j_2\w)}_t$ be the function corresponding to
$W_{\q(j_1,j_2\w)}$ as in Proposition \ref{PropBijectGammaW}.  Just
as in Section \ref{SectionMoreGenMax}, it is easy to see (via the
contraction mapping principle) that there exist open sets $0\in
U\subseteq \R^N$, $0\in V\subseteq \R^n$, independent of $j_1,j_2$
such that $\gamma^{j_1,j_2}:U\times V\rightarrow \R^n$.  By possibly
shrinking $a$ and the support of $\psi_1,\psi_2$, we may define the
maximal function,
\begin{equation*}
\sM_1 f\q(x\w)= \sup_{\q(j_1,j_2\w)\in \N^{N}\times
\N^r}\psi_1\q(x\w)\int_{\q|t\w|\leq a} \q|f\q(
\gamma_{t}^{\q(j_1,j_2\w)}\q(x\w)\w) \w| \psi_2\q(
\gamma^{\q(j_1,j_2\w)}_{t}\q(x\w)\w) \: dt.
\end{equation*}

We claim that $\sM_0 f\q(x\w)\leq \sM_1 f\q(x\w)$. To see this, we need
only show for every $j\in \N^{N}$, $\gamma_{2^{-j}t}$ is of the form
$\gamma^{\q(j_1,j_2\w)}_t$ for some $j_1,j_2$. In light of
\eqref{EqnWDilated}, it suffices to show for every $j\in \N^{N}$,
$W\q(2^{-j}t,x\w)$ is of the form $W_{\q(j_1,j_2\w)}\q(t,x\w)$ for
some $j_1,j_2$. In light of \eqref{EqnWDecomposed},
\begin{equation*}
W\q(2^{-j}t,x\w)= \sum_{l=1}^r c_{\alpha_l}\q(2^{-j}t,x\w)
t^{\alpha_l} 2^{-j\cdot \alpha_l} X_{\alpha_l}.
\end{equation*}
Thus, if we take $j_2=\q( j\cdot \alpha_1,j\cdot\alpha_2,\ldots,
j\cdot\alpha_r\w)$, we have $W\q(2^{-j}t,x\w) =
W_{\q(j,j_2\w)}\q(t,x\w)$. This completes the proof that $\sM_0
f\q(x\w)\leq \sM_1 f\q(x\w)$.

Hence, to prove Theorem \ref{ThmRealAnalMax}, we need only show that
$\sM_1$ is bounded on $L^p$, $1<p\leq \infty$.  We will show that
$\sM_1$ is of the form covered in Theorem \ref{ThmMoreGenMaxFunc},
thereby reducing Theorem \ref{ThmRealAnalMax} to Theorem
\ref{ThmMoreGenMaxFunc}.

For $l=1,\ldots, r$, let $X_l=X_{\alpha_l}$, $c_l\q(t,s,x\w)=
c_{\alpha_l}\q( t,x\w) s^{\alpha_l}$, and $d_l\in
\N^{\nu}=\N^{N}\times \N^r$ be given by
$d_l=\q(0,\hd_l\w)\in\N^N\times \N^r$.  Furthermore, for
$\q(j_1,j_2\w)\in \N^{N}\times \N^r$, we define $2^{-\q(j_1,j_2\w)}t
= 2^{-j_1}t$. With this new notation, for $j\in \N^\nu$, we have
\begin{equation*}
W_j\q(t,x\w)= \sum_{l=1}^r c_l\q(2^{-j} t,t,x\w) 2^{-j\cdot d_l}X_l.
\end{equation*}

We apply Lemma \ref{LemmaRealAnalAlwaysGeneratesFiniteList}
to extend the list $\q(X_1,d_1\w), \ldots, \q(X_r,d_r\w)$
to a list $\q(X_1,d_1\w),\ldots, \q(X_q,d_q\w)$
as in Lemma \ref{LemmaRealAnalAlwaysGeneratesFiniteList}:  that this
extended list satisfies the hypotheses of the list of the
same name in Section \ref{SectionMoreGenMax} is exactly
the conclusion of Lemma \ref{LemmaRealAnalAlwaysGeneratesFiniteList}.

For $r+1\leq l\leq q$, define $c_l\q(t,s,x\w)\equiv 0$.  Note, we
have,
\begin{equation*}
W_j\q( t,x\w) = \sum_{l=1}^q c_l\q( 2^{-j} t,t,x\w) 2^{-j\cdot d_l}
X_l.
\end{equation*}
To complete the proof that $\sM_1$ satisfies the hypotheses of
Theorem \ref{ThmMoreGenMaxFunc}, we need only show that $c_1,\ldots,
c_r$ satisfy \eqref{EqnGenMaxCoefNonVanish} and $c_1,\ldots, c_q$
satisfy \eqref{EqnGenMaxCoefVanish}.  \eqref{EqnGenMaxCoefVanish} is
trivial for $c_{r+1},\ldots, c_q$ (since they are all $0$) and so we
need only verify \eqref{EqnGenMaxCoefNonVanish} and
\eqref{EqnGenMaxCoefVanish} for $c_1,\ldots, c_r$.  Here, we are
taking $\alpha_1,\ldots, \alpha_r$ as above (see
\eqref{EqnWDecomposed}).  \eqref{EqnGenMaxCoefNonVanish} and
\eqref{EqnGenMaxCoefVanish} will follow from the fact that
$c_{\alpha_1},\ldots, c_{\alpha_r}$ satisfy
\eqref{EqnTaylorPrepGoodCoef}.

First we verify \eqref{EqnGenMaxCoefNonVanish}.  Let $1\leq l\leq
r$.  Consider,
\begin{equation*}
\frac{1}{\alpha_l!} \frac{\partial}{\partial
s}^{\alpha_l}\bigg|_{s=t=0}c_l\q(t,s,x\w) = \frac{1}{\alpha_l!}
\frac{\partial}{\partial
s}^{\alpha_l}\bigg|_{s=t=0}c_{\alpha_l}\q(t,x\w) s^{\alpha_l}=
\frac{1}{\alpha_l!} \frac{\partial}{\partial
t}^{\alpha_l}\bigg|_{t=0}c_{\alpha_l}\q(t,x\w) t^{\alpha_l}=1,
\end{equation*}
where the last equality follows from \eqref{EqnTaylorPrepGoodCoef}.
Thus, \eqref{EqnGenMaxCoefNonVanish} holds.

We turn to \eqref{EqnGenMaxCoefVanish}.  Fix $1\leq l,k\leq r$ and
$\beta_1,\beta_2$ such that $\beta_1+\beta_2=\alpha_l$.  Consider,
\begin{equation}\label{EqnShowingCoefVanish1}
\frac{\partial}{\partial t}^{\beta_1} \frac{\partial}{\partial
s}^{\beta_2}\bigg|_{s=t=0} c_{k}\q( t,s,x\w) =
\frac{\partial}{\partial t}^{\beta_1} \frac{\partial}{\partial
s}^{\beta_2}\bigg|_{s=t=0} c_{\alpha_k}\q( t,x\w) s^{\alpha_k}.
\end{equation}
Note that the right hand side of \eqref{EqnShowingCoefVanish1} is
$0$ unless $\beta_2=\alpha_k$.  Thus, we need only consider the case
when $\beta_2=\alpha_k$; in this case, we have,
\begin{equation}\label{EqnShowingCoefVanish2}
\frac{\partial}{\partial t}^{\beta_1} \frac{\partial}{\partial
s}^{\beta_2}\bigg|_{s=t=0} c_{\alpha_k}\q( t,x\w) s^{\alpha_k} = C
\frac{\partial}{\partial t}^{\alpha_l} \bigg|_{t=0} c_{\alpha_k}\q(
t,x\w) t^{\alpha_k},
\end{equation}
where $C$ is some constant.  Note that the right hand side of
\eqref{EqnShowingCoefVanish2} is $0$ unless $l=k$, by
\eqref{EqnTaylorPrepGoodCoef}.  \eqref{EqnGenMaxCoefVanish} follows.
This completes the proof that $\sM_1$ is of the form covered by
Theorem \ref{ThmMoreGenMaxFunc}, and finishes the reduction of
Theorem \ref{ThmRealAnalMax} to Theorem \ref{ThmMoreGenMaxFunc}.

\begin{rmk}
Let us take a moment to remark on the essential idea of this section.
When one is considering the singular Radon transform (Theorem \ref{ThmMainSingIntThm}), which vector fields correspond to pure powers and non-pure powers
is forced, due to the nature of the cancellation in the singular kernel.
However, when we consider the maximal function, we introduce
the cancellation in an {\it ad hoc} way (see the operators $B_j$
in Section \ref{SectionProofOfMax}).  Because of this, we have
some freedom in choosing which vector fields correspond
to pure powers, by considering a stronger maximal operator.
This idea was adapted from \cite{ChristTheStrongMaximalFunctionOnANilpotentGroup}.
\end{rmk}

\section{Proof of the general maximal result (Theorem
\ref{ThmMoreGenMaxFunc})}\label{SectionProofOfMax}
In this section, we prove Theorem \ref{ThmMoreGenMaxFunc}.  The
proof is a modification of the proof of Theorem 5.4 of
\cite{SteinStreetMultiParameterSingRadonLp}.  First we will
introduce some necessary auxiliary operators, in a manner completely
analogous to the methods in
\cite{SteinStreetMultiParameterSingRadonLp}.  Then, we will describe
the modifications of the proof in
\cite{SteinStreetMultiParameterSingRadonLp} necessary to prove
Theorem \ref{ThmMoreGenMaxFunc}.  The reader may wish to have a copy
of \cite{SteinStreetMultiParameterSingRadonLp} at hand, as we will
be referring to it repeatedly.

The proof of Theorem \ref{ThmMoreGenMaxFunc} proceeds by induction
on $\nu$.  We begin by describing the necessary modifications to
Section 9 of \cite{SteinStreetMultiParameterSingRadonLp}, where the
induction is set up. We take all the same notation as Theorem
\ref{ThmMoreGenMaxFunc}. Let $\denum{\psi}{0}\in
C_0^\infty\q(\R^n\w)$ be non-negative and satisfy
$\psi_1,\psi_2\prec\denum{\psi}{0}$.  We also assume that
$\denum{\psi}{0}$ has small support. Let $\sigma\in C_0^\infty\q(
B^N\q(a\w)\w)$ satisfy $\sigma\geq 0$ and $\sigma\geq 1$ on a
neighborhood of $0$.
We define
for $j\in \Ninf^\nu$,
\begin{equation*}
M_j f\q(x\w) = \denum{\psi}{0}\q(x\w)\int f\q(\gamma^j_t\q(x\w)\w)
\denum{\psi}{0}\q(\gamma^j_t\q(x\w)\w)\sigma\q(t\w)\: dt.
\end{equation*}
It is immediate to see, if we shrink $a>0$ in the definition of
$\sMt$, we have,
\begin{equation*}
\sMt f\q(x\w) \lesssim \sup_{j\in \N} M_j \q|f\w| \q(x\w).
\end{equation*}
Thus, to prove Theorem \ref{ThmMoreGenMaxFunc} it suffices to prove
the following proposition,
\begin{prop}\label{PropMaxAsVect}
\begin{equation*}
\LpN{p}{\sup_{j\in \N^\nu}\q|M_j f\q(x\w)\w|}\lesssim \LpN{p}{f},
\end{equation*}
for $1<p<\infty$.
\end{prop}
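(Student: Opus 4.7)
The plan is to prove Proposition \ref{PropMaxAsVect} by induction on $\nu$, adapting the blueprint of Section 9 of \cite{SteinStreetMultiParameterSingRadonLp} to the maximal setting (where cancellation must be supplied by hand rather than by the kernel). The base case $\nu=1$ is a single-parameter Radon-type maximal operator; the hypotheses of Theorem \ref{ThmMoreGenMaxFunc}, namely that $\q(\q[X_j,X_k\w],d_j+d_k\w)$ is controlled by $\q(X,d\w)$ and $\q(X_{r+1},d_{r+1}\w),\ldots,\q(X_q,d_q\w)\in \sL_0\q(\q\{\q(X_1,d_1\w),\ldots,\q(X_r,d_r\w)\w\}\w)$, supply the Carnot--Carath\'eodory data required to invoke the single-parameter theorem of \cite{ChristNagelSteinWaingerSingularAndMaximalRadonTransforms}, together with the Frobenius refinement of \cite{StreetMultiParameterCCBalls} that allows H\"ormander's condition to be replaced by local control.

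For the inductive step, I would single out a direction $\mu_0\in \q\{1,\ldots,\nu\w\}$ and introduce the difference operators
\[
B_j f = M_j f - M_{j-\mathbf{1}_{\mu_0}} f, \qquad j_{\mu_0}\geq 1,
\]
where $\mathbf{1}_{\mu_0}\in \N^{\nu}$ has a $1$ in the $\mu_0$-coordinate and zeros elsewhere. Writing $j=\q(j',j_{\mu_0}\w)$ with $j'\in \N^{\nu-1}$, the telescoping identity $M_j f=\sum_{0\leq k\leq j_{\mu_0}} B_{\q(j',k\w)}f + M_{\q(j',0\w)}f$ holds. By the structure of the coefficients \eqref{EqnGenMaxCoefNonVanish}--\eqref{EqnGenMaxCoefVanish}, the subtraction defining $B_j$ produces genuine cancellation at scale $2^{-j_{\mu_0}}$ in precisely the $\mu_0$-direction. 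The technical heart of the argument is a Littlewood--Paley estimate of the form
\[
\LpN{p}{\q(\sum_{k\in \N} \q|B_{\q(j',k\w)}f\w|^2\w)^{1/2}}\lesssim \LpN{p}{f},\qquad 1<p<\infty,
\]
uniformly in $j'\in \N^{\nu-1}$, together with its iterated $\ell^2\q(\N^\nu\w)$-valued version. The $L^2$ bound follows from Cotlar--Stein by pairing the $\mu_0$-cancellation of $B_j$ against the smoothing by the balls $\B{X}{d}{x}{2^{-j}}$, and the extension to $L^p$ requires a vector-valued Calder\'on--Zygmund decomposition on the multi-parameter space of homogeneous type built from these balls, exactly as in Sections 9--13 of \cite{SteinStreetMultiParameterSingRadonLp}.

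Finally, to bound $\sup_j \q|M_j f\w|$, I would combine the square function above with a Christ-type Cauchy--Schwarz trick and the inductive hypothesis. For fixed $j'$, the telescoping bounds $\sup_{j_{\mu_0}} \q|M_{\q(j',j_{\mu_0}\w)}f\w|$ by a single-parameter maximal operator applied to the $B_{\q(j',k\w)}f$'s, which is in turn controlled pointwise by a Hardy--Littlewood maximal operator along the Carnot--Carath\'eodory balls $\B{X}{d}{x}{2^{-j}}$ acting on $\q(\sum_k \q|B_{\q(j',k\w)}f\w|^2\w)^{1/2}$. Taking $\sup_{j'}$ then reduces to a $\q(\nu-1\w)$-parameter instance of the same type of problem, to which the inductive hypothesis applies in its vector-valued form. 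The main obstacle will be the $L^2$ almost-orthogonality of the $B_j$'s: because $\gamma_t^j$ is not a group translation, one must work in the exponential coordinates furnished by \cite{StreetMultiParameterCCBalls} on each ball $\B{X}{d}{x}{2^{-j}}$, and verify via \eqref{EqnGenMaxCoefNonVanish}--\eqref{EqnGenMaxCoefVanish} that the pure-power structure of $d_1,\ldots,d_r$ forces $B_j$ to have cancellation of exactly the order required for rapid operator-norm decay of $B_j^* B_k$ and $B_j B_k^*$.
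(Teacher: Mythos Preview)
Your outline follows the one-parameter-at-a-time strategy of Christ \cite{ChristTheStrongMaximalFunctionOnANilpotentGroup}, whereas the paper takes a different route. The paper also inducts on $\nu$, but with trivial base case $\nu=0$ (where $M_{j_\emptyset}$ is multiplication by a constant times $\psi_0^2$), and it does \emph{not} difference in a single direction. Instead it introduces the Littlewood--Paley projections $A_j$ from \cite{SteinStreetMultiParameterSingRadonLp} and sets
\[
B_j=\sum_{E\subseteq\{1,\ldots,\nu\}}(-1)^{|E|}A_{j_{E^c}}M_{j_E},
\]
where $j_E\in\N_\infty^\nu$ has its $\mu$-th coordinate set to $\infty$ for $\mu\notin E$. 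This inclusion--exclusion gives $B_j$ cancellation in all $\nu$ directions at once, so the core estimate is the clean $\|B_jD_k\|_{L^2\to L^2}\lesssim 2^{-\epsilon|j-k|}$ (Proposition~\ref{PropL2Cancel}), proved by plugging into the machinery of \cite{StreetMultiParameterSingRadonLt,SteinStreetMultiParameterSingRadonLp}; the condition \eqref{EqnXjAsTaylor} is what makes $M_{j_E}$ fit that machinery. The reduction of Proposition~\ref{PropMaxAsVect} to Proposition~\ref{PropToShowB} uses only the scalar inductive hypothesis, because sending $j^\mu\to\infty$ genuinely \emph{removes} the $\mu$-th parameter: the coordinates of $2^{-j_E}t$ with $e^\mu\ne 0$ vanish and the $X_l$ with $d_l^\mu\ne 0$ drop out, so each $M_{j_E}$ with $E\subsetneq\{1,\ldots,\nu\}$ is literally a $|E|$-parameter operator of the same form.

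This last point is where your scheme has a gap. Your telescoping terminates at $j_{\mu_0}=0$, not $\infty$; setting $j_{\mu_0}=0$ leaves all $t$-variables and all vector fields in play, so $\sup_{j'}|M_{(j',0)}f|$ is not a $(\nu-1)$-parameter instance of Proposition~\ref{PropMaxAsVect}. More seriously, after the Christ trick the object you must bound is $\sup_{j'}$ of a Carnot--Carath\'eodory maximal function acting on $(\sum_k|B_{(j',k)}f|^2)^{1/2}$, which is not of the form $\sup_{j'}|M_{j'}g|$ for any single $g$; you yourself note that the inductive hypothesis is needed ``in its vector-valued form.'' But Proposition~\ref{PropMaxAsVect} is a scalar statement, so the induction does not close as written. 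To salvage your approach you would have to formulate and prove an $\ell^2$-valued strengthening of the proposition from the outset and carry the vector-valued structure through every step---feasible in principle, but a substantial extra layer that the paper's inclusion--exclusion with $j^\mu\to\infty$ sidesteps entirely.
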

Indeed, merely apply Proposition \ref{PropMaxAsVect} to $\q|f\w|$ to
prove Theorem \ref{ThmMoreGenMaxFunc}. It is Proposition
\ref{PropMaxAsVect} which we prove by induction on $\nu$.  For
$E\subseteq \nuset$ and $j=\q(j^1,\ldots, j^\nu\w)\in \N^\nu$,
define $j_E=\q(j_E^1,\ldots, j_E^\nu\w)\in \Ninf^\nu$ by,
\begin{equation*}
j_E^\mu = \begin{cases} j^\mu &\text{if }\mu\in E,\\
\infty & \text{if }\mu\not\in E.\end{cases}
\end{equation*}

Thinking of $j_E$ as an element of $\N^{\q|E\w|}$ (by suppressing
those coordinates which equal $\infty$), it is easy to see that
$M_{j_E}$ is of the same form as $M_j$, but with $\nu$ replaced by
$E$.  In particular, $\gamma_t^{j_E}$ is of the same form as
$\gamma_t^j$, but instead with $\q|E\w|$ parameter dilations. Thus
our inductive hypothesis implies for $E\subsetneq \nuset$,
\begin{equation*}
\LpN{p}{\sup_{j\in \N^\nu}\q|M_{j_E} f\q(x\w)\w|}\lesssim
\LpN{p}{f},\quad 1<p<\infty.
\end{equation*}
Note that the base case of our induction is trivial.  Indeed,
$M_{j_\emptyset} f = \q[\int \sigma\q(t\w)\: dt\w] \denum{\psi}{0}^2
f$.

For $j\in \Ninf^\nu$, we define $A_j$ from the list of vector fields
$\q(X,d\w)$ just as in \cite{SteinStreetMultiParameterSingRadonLp}.
Similarly, for $j\in \N^\nu$, we define $D_j$ just as in
\cite{SteinStreetMultiParameterSingRadonLp}.  For $j\in \N^{\nu}$,
we define,
\begin{equation*}
B_j=\sum_{E\subseteq \nuset} \q(-1\w)^{\q|E\w|} A_{j_{E^c}} M_{j_E}.
\end{equation*}
Just as in \cite{SteinStreetMultiParameterSingRadonLp}, Proposition
\ref{PropMaxAsVect} follows from the following proposition,
\begin{prop}\label{PropToShowB}
\begin{equation*}
\LpN{p}{\sup_{j\in \N^\nu}\q|B_j f\q(x\w)\w|}\lesssim \LpN{p}{f},
\end{equation*}
for $1<p<\infty$.
\end{prop}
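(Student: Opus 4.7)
The plan is to follow the strategy of Section 9 of \cite{SteinStreetMultiParameterSingRadonLp}, adapted to the maximal setting. The operator $B_j=\sum_{E\subseteq \nuset}\q(-1\w)^{\q|E\w|} A_{j_{E^c}}M_{j_E}$ is designed so that the alternating sum provides a ``vanishing moment'' in each of the $\nu$ parameter directions simultaneously (in a direction $\mu$, the $E\ni\mu$ and $E\not\ni\mu$ terms differ by an $A_j$ smoothing which cancels the average of $M_j$ in that direction). The proof combines (i) uniform $L^p$ bounds on $B_j$, (ii) an $L^2$ bound with geometric decay in $\q|j\w|$ coming from this cancellation, and (iii) a square-function argument to pass from these bounds to the supremum.

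First, I would establish a uniform bound $\LpN{p}{B_j f}\lesssim \LpN{p}{f}$ for $1<p<\infty$. Each $A_j$ is bounded on $L^p$ uniformly in $j\in \Ninf^\nu$ by its construction from the Carnot-Carath\'eodory list $\q(X,d\w)$ (exactly as in \cite{SteinStreetMultiParameterSingRadonLp}). Each $M_{j_E}$ with $E\subsetneq\nuset$ is controlled by the inductive hypothesis (Proposition \ref{PropMaxAsVect} applied with $\nu$ replaced by $\q|E\w|$, after suppressing infinite coordinates), while $M_j$ itself is bounded on $L^p$ uniformly in $j\in \N^\nu$, either by a pointwise comparison with a standard averaging maximal operator or directly from the contraction mapping description of $\gamma_t^j$.

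The decisive step, which I expect to be the main obstacle, is an $L^2$ decay estimate of the form $\LpN{2}{B_j f}\lesssim 2^{-\epsilon \q|j\w|}\LpN{2}{f}$ for some $\epsilon>0$. Here the conditions \eqref{EqnGenMaxCoefNonVanish} and \eqref{EqnGenMaxCoefVanish} on the coefficients $c_l\q(t,s,x\w)$ are essential:  they force, for each direction $\mu$, a matching of leading Taylor coefficients so that the alternating sum $\sum_{\mu\in E,E'}\q(-1\w)^{\q|E\w|} A_{j_{E^c}} M_{j_E}$ acquires an extra factor in the $\mu$-th parameter.  Expanding $W_j$ in Taylor series in $t$, writing the leading terms in terms of the generating list $\q(X_1,d_1\w),\ldots,\q(X_r,d_r\w)$, and integrating by parts against the vector fields $2^{-j\cdot d_l}X_l$ on the Carnot-Carath\'eodory balls $\B{X}{d}{x}{2^{-j}}$ converts each moment condition into a factor $2^{-\epsilon j^\mu}$. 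Iterating over all $\mu$ gives the claimed decay. The mechanics are essentially those of Section 9 of \cite{SteinStreetMultiParameterSingRadonLp}, except that the singular-kernel cancellation there is replaced by the \emph{ad hoc} cancellation of the $B_j$ (cf.\ the remark at the end of Section \ref{SectionReductionMaxInt}); the control hypothesis on $\q(X,d\w)$ and the generation condition $\q(X_{r+1},d_{r+1}\w),\ldots,\q(X_q,d_q\w)\in \sL_0\q(\q\{\q(X_1,d_1\w),\ldots,\q(X_r,d_r\w)\w\}\w)$ are exactly what is needed to make this integration by parts produce a genuine gain.

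Once the $L^2$ decay is in hand, interpolation with the uniform $L^p$ bound yields $\LpN{p}{B_j f}\lesssim 2^{-\epsilon_p \q|j\w|}\LpN{p}{f}$ for every $1<p<\infty$ and some $\epsilon_p>0$. Finally, to pass from individual bounds to the supremum, I would use the pointwise inequality $\sup_{j\in \N^\nu}\q|B_j f\w|\leq \q(\sum_{j\in \N^\nu}\q|B_j f\w|^p\w)^{1/p}$ (or the sharper vector-valued square function argument employed in \cite{SteinStreetMultiParameterSingRadonLp}) and invoke the geometric summability of $2^{-\epsilon_p\q|j\w|}$ over $j\in \N^\nu$ to conclude $\LpN{p}{\sup_{j\in \N^\nu}\q|B_j f\w|}\lesssim \LpN{p}{f}$, which is Proposition \ref{PropToShowB}. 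All of the technical novelty relative to \cite{SteinStreetMultiParameterSingRadonLp} is in verifying that the decay estimate survives the passage from singular-integral cancellation to the built-in cancellation of $B_j$; the rest is routine once that decay is secured.
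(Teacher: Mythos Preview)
Your proposed $L^2$ decay estimate $\LpN{2}{B_j f}\lesssim 2^{-\epsilon\q|j\w|}\LpN{2}{f}$ is false in general, and this is the fatal gap in the argument. The cancellation built into $B_j$ is cancellation \emph{at scale} $2^{-j}$: it annihilates functions that are smooth relative to that scale, but it does nothing to functions oscillating at scale $2^{-j}$ or finer. Concretely, take $\nu=1$ and the translation-invariant model on $\R^n$ with $\gamma_t^j(x)=x+2^{-j}t$ and no cutoffs; then $A_j$ and $M_j$ are convolutions with bumps at scale $2^{-j}$, $A_\infty$ and $M_\infty$ are multiplication by constants $c_A,c_M$, and the Fourier multiplier of $B_j=A_jM_\infty-A_\infty M_j$ is $m(2^{-j}\xi)$ with $m(\eta)=c_M\widehat{\phi}_A(\eta)-c_A\widehat{\phi}_M(\eta)$. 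Since $m(0)=0$ but $m$ is generically nonzero away from the origin, $\LtOpN{B_j}=\sup_\eta |m(\eta)|$ is a positive constant \emph{independent of $j$}. So there is no decay in $\q|j\w|$, and the summation step at the end of your outline collapses.

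What the paper actually proves (Proposition \ref{PropL2Cancel}) is the almost-orthogonality estimate $\LtOpN{B_jD_k}\lesssim 2^{-\epsilon\q|j-k\w|}$, where $D_k$ are the Littlewood--Paley operators built from $(X,d)$. The decay now comes from the \emph{interaction} of two scales: when $k$ is much smaller than $j$ in some coordinate the cancellation in $B_j$ kills the low-frequency piece $D_kf$, and when $k$ is much larger the smoothing in $B_j$ kills the high-frequency piece. One then writes $f=\sum_k D_k^2 f$ (a Calder\'on reproducing formula adapted to $(X,d)$), bounds $\sup_j\q|B_jf\w|\leq \sum_k \sup_j\q|B_jD_k(D_kf)\w|$, and uses the $2^{-\epsilon\q|j-k\w|}$ decay together with the $L^p$ square-function bounds for $\{D_k\}$ to sum. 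Your integration-by-parts heuristic and the role of \eqref{EqnGenMaxCoefNonVanish}--\eqref{EqnGenMaxCoefVanish} are on the right track, but they feed into the $\q|j-k\w|$ estimate, not a $\q|j\w|$ estimate; the Littlewood--Paley operators $D_k$ are not optional here.
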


It follows in exactly the same manner as
\cite{SteinStreetMultiParameterSingRadonLp} that to prove
Proposition \ref{PropToShowB}, it suffices to prove,
\begin{prop}\label{PropL2Cancel}
If $a>0$ is sufficiently small, there exists $\epsilon>0$ such that,
\begin{equation*}
\LpOpN{2}{B_{j} D_k}\lesssim 2^{-\epsilon\q|j-k\w|},
\end{equation*}
for $j,k\in \N^{\nu}$.
\end{prop}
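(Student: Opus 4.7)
The plan is to adapt the $L^2$ almost-orthogonality argument used in \cite{SteinStreetMultiParameterSingRadonLp} to prove the analogous estimate in the present setting, with the key modification being the handling of the more general Taylor coefficients $c_l\q(2^{-j}t,t,x\w)$ that appear in the definition of $W_j$. The overall strategy is the standard one: exploit the cancellation built into $B_j$ in each of the $\nu$ parameter directions, and pair it against the smoothing and frequency localization of $D_k$, so that in each coordinate $\mu$ one obtains a decay factor $2^{-\epsilon\q|j^\mu - k^\mu\w|}$; multiplying over $\mu$ yields the claimed bound.

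First I would rewrite $B_j = \sum_{E\subseteq\nuset}\q(-1\w)^{\q|E\w|} A_{j_{E^c}} M_{j_E}$ as a ``product of differences'' in the $\nu$ parameter directions, grouping together subsets $E$ that agree outside a given coordinate $\mu$. For each $\mu$ this produces a one-parameter cancellation operator, of schematic form $M_{j_E}$ minus its ``lifted'' counterpart obtained by sending $j^\mu \to \infty$ and inserting the corresponding $A$-factor. Using the normalization \eqref{EqnGenMaxCoefNonVanish} and the vanishing condition \eqref{EqnGenMaxCoefVanish}, this cancellation extracts, to leading order, the single vector field $X_l$ whose formal degree $d_l$ is nonzero precisely in the $\mu$-th coordinate; the remaining $c_l$-dependence is absorbed into uniformly bounded smooth coefficients by Taylor-expanding in $t$ around $0$.

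Next, I would invoke the $L^2$ theory developed for the list $\q(X,d\w)$ in \cite{SteinStreetMultiParameterSingRadonLp}. Since $\q(\q[X_j,X_k\w],d_j+d_k\w)$ is controlled by $\q(X,d\w)$ on a neighborhood of $0$, the operators $A_j$ and $D_k$ fit into the multi-parameter Carnot-Carath\'eodory Littlewood-Paley machinery; in particular, $D_k$ has a reproducing formula with cancellation in each direction on scales larger than $2^{-k}$. Combining this with the $\mu$-th direction cancellation of $B_j$ --- via integration by parts along the vector field $X_l$ --- yields the decay $2^{-\epsilon\q|j^\mu-k^\mu\w|}$ coordinate by coordinate. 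The case $j^\mu > k^\mu$ is handled directly, while $j^\mu < k^\mu$ is treated by passing to the adjoint $B_j^*$, whose cancellation is of the same type since the commutator hypothesis on $\q(X,d\w)$ is symmetric.

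The main obstacle will be verifying, in this more general setting, that the coefficients $c_l\q(2^{-j}t,t,x\w)$ do not spoil the cancellation structure. The normalization \eqref{EqnGenMaxCoefNonVanish} is precisely what is needed so that, after the alternating-sum cancellation, a single vector field $X_l$ survives at leading order in the $\mu$-th parameter direction, while the vanishing condition \eqref{EqnGenMaxCoefVanish} eliminates the cross terms that would otherwise mix the cancellation across distinct $\mu$'s. These two conditions together reduce the leading-order behavior of $B_j$ to the form already analyzed in \cite{SteinStreetMultiParameterSingRadonLp}; the higher-order Taylor remainders in the $c_l$ produce extra factors decaying in $\q|j\w|$ and are harmless after being absorbed into the constant $\epsilon$. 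Once this bookkeeping is complete, the $L^2$ decay estimate transfers with only notational changes from the proof in \cite{SteinStreetMultiParameterSingRadonLp}.
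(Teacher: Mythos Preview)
Your high-level strategy---reduce to the $L^2$ almost-orthogonality framework of \cite{SteinStreetMultiParameterSingRadonLp}, and argue that conditions \eqref{EqnGenMaxCoefNonVanish}, \eqref{EqnGenMaxCoefVanish} are exactly what is needed for the new $M_{j_E}$ to slot into that framework---matches the paper. In particular, your observation that these conditions isolate a single $X_l$ at leading order is essentially the paper's key identity \eqref{EqnXjAsTaylor}: $\frac{1}{\alpha_l!}\partial_t^{\alpha_l}\big|_{t=0} W_{j'}(t,x)=2^{-j'\cdot d_l}X_l$.

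However, your execution diverges from the paper in two places, and one of them is a real gap. First, the paper does \emph{not} organize the argument as a product of one-parameter differences giving a factor $2^{-\epsilon|j^\mu-k^\mu|}$ in every coordinate; it selects a single $\mu_1$ with $j^{\mu_1}-k^{\mu_1}=|j-k|_\infty$ and extracts decay only there, by pairing $M_{j_{E\cup\{\mu_1\}}}$ with $M_{j_E}$ in the alternating sum. Second, and more importantly, the mechanism by which this pairing yields a small factor is not ``integration by parts along $X_l$.'' The paper's specific new input is the construction of an interpolating vector field $\widehat{W}_{j,k,E,\mu_1}(t,s,x)$, obtained by replacing the $\mu_1$-component of $2^{-j_E}$ by $s\,2^{-k^{\mu_1}}$, so that $\widehat{W}|_{s=0}=W_{j_E}$ and $\widehat{W}|_{s=2^{k^{\mu_1}-j^{\mu_1}}}=W_{j_{E\cup\{\mu_1\}}}$. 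The associated $\widehat{\gamma}_{t,s}$ then expresses $M_{j_{E\cup\{\mu_1\}}}-M_{j_E}$ as a difference over an $s$-interval of length $2^{k^{\mu_1}-j^{\mu_1}}$, which is precisely the form required to play the role of $R_1-R_2$ in the black box from \cite{SteinStreetMultiParameterSingRadonLp} (itself deferring to \cite{StreetMultiParameterSingRadonLt}). Your sketch does not supply any analogue of this interpolation, and without it the ``small factor'' step is unjustified.

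One further correction: the case $k^{\mu_1}>j^{\mu_1}$ is not handled by passing to $B_j^{*}$. In the framework being invoked, decay in that direction comes from the built-in cancellation of $D_k$; since $D_k$ is unchanged from \cite{SteinStreetMultiParameterSingRadonLp}, that case requires no new argument, which is why the paper only discusses the $j^{\mu_1}-k^{\mu_1}=|j-k|_\infty$ case.
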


The proof 
in \cite{SteinStreetMultiParameterSingRadonLp}
of the result analogous to Proposition \ref{PropL2Cancel}
(Theorem 10.1 of
\cite{SteinStreetMultiParameterSingRadonLp}) follows by reducing the
question to a general result in
\cite{StreetMultiParameterSingRadonLt}.  The proof of Proposition
\ref{PropL2Cancel} has the same basic outline as the proof of
Theorem 10.1 of \cite{SteinStreetMultiParameterSingRadonLp}, with
only a few minor differences.  We outline, below, the necessary
facts needed to adapt the proof in
\cite{SteinStreetMultiParameterSingRadonLp} to our situation.  Note
that $A_j$ and $D_j$ are defined in the same way as in
\cite{SteinStreetMultiParameterSingRadonLp}--we therefore only need
to discuss the modifications necessary to deal with the new form of
$M_j$.

One key point is the following; for $1\leq l\leq r$, and $j'\in
\Ninf^\nu$,
\begin{equation}\label{EqnXjAsTaylor}
\frac{1}{\alpha_l!}\frac{\partial}{\partial
t}^{\alpha_l}\bigg|_{t=0} W_{j'}\q(t,x\w) = 2^{-{j'}\cdot d_l} X_l;
\end{equation}
i.e., $2^{-{j'}\cdot d_l} X_l$ is the Taylor coefficient of
$t^{\alpha_l}$, when the Taylor series of $W_{j'}$ is taken in the
$t$ variable.  This is an immediate consequence of the definition of
$W_j$ and \eqref{EqnGenMaxCoefNonVanish} and
\eqref{EqnGenMaxCoefVanish}.  \eqref{EqnXjAsTaylor} is the main
property needed when $M_{j_E}$ plays the role of some $S_l$ in
\cite{SteinStreetMultiParameterSingRadonLp}.

\begin{rmk}
One also needs that ``$M_{j_E}$ is controlled by $\q( 2^{-j\wedge k}
X,\sd\w)$ at the unit scale'' (see
\cite{SteinStreetMultiParameterSingRadonLp} for this terminology).
This follows immediately from the definition of $M_{j_E}$.
\end{rmk}

The other main property we need is as follows.  In the case when
$j^{\mu_1}-k^{\mu_1}=\q|j-k\w|_\infty$, for some $\mu_1$, we must
use $M_{j_{\Ewmu}}-M_{j_E}$ as $R_1-R_2$ in the argument (see
\cite{SteinStreetMultiParameterSingRadonLp} for a discussion of what
we mean by $R_1-R_2$).
Define, $\Wh_{j,k,E,\mu_1}\q( t,s,x\w)$ by the same formula as
$W_{j_E}\q(t,x\w)$ except with $2^{-j_E}$ replaced by
$\delta=\q(\delta_1,\ldots,\delta_\nu\w)\in \q[0,1\w]^\nu$, where,
\begin{equation*}
\delta_{\mu}=\begin{cases} 2^{-j_E^\mu} & \text{if }\mu\ne \mu_1,\\
s2^{-k^{\mu_1}} & \text{if }\mu=\mu_1.
\end{cases}
\end{equation*}
Note that,
\begin{equation*}
\Wh_{j,k,E,\mu_1}\q( t,0,x\w)=W_{j_E}\q(t,x\w),
\quad\Wh_{j,k,E,\mu_1}\q( t,2^{k^{\mu_1}-j^{\mu_1}},x\w)=
W_{j_{\Ewmu}}\q(t,x\w).
\end{equation*}
Thus, if we let $\gh_{t,s}\q(x\w)$ be the function associated to
$\Wh_{j,k,E,\mu_1}\q( t,s,x\w)$ as in Proposition
\ref{PropBijectGammaW}, we see that,
\begin{equation*}
M_{j_E}f\q(x\w) = \denum{\psi}{0}\q(x\w) \int
f\q(\gh_{t,0}\q(x\w)\w)\denum{\psi}{0}\q(\gh_{t,0}\q(x\w)\w)\sigma\q(t\w)\:
dt,
\end{equation*}
\begin{equation*}
M_{j_{\Ewmu}}f\q(x\w) = \denum{\psi}{0}\q(x\w) \int
f\q(\gh_{t,2^{k^{\mu_1}-j^{\mu_1}}}\q(x\w)\w)\denum{\psi}{0}\q(\gh_{t,2^{k^{\mu_1}-j^{\mu_1}}}\q(x\w)\w)\sigma\q(t\w)\:
dt.
\end{equation*}
From here it is easy to see that $M_{j_{\Ewmu}}-M_{j_E}$ can play
the role of $R_1-R_2$ in this situation.

With the above outlined modifications, the proof in
\cite{SteinStreetMultiParameterSingRadonLp} goes through to prove
Proposition \ref{PropL2Cancel}.  We leave the details to the
interested reader.  This completes the proof of Proposition
\ref{PropL2Cancel} and therefore the proof of Theorem
\ref{ThmMoreGenMaxFunc}.

\section{A closing remark}\label{SectionClosing}
In this paper, we put more restrictions on the classes of kernels
$K$ we considered, as compared to \cite{SteinStreetMultiParameterSingRadonLp}.
None of these additional restrictions were essential.

In \cite{SteinStreetMultiParameterSingRadonLp}, the class of
kernels $\sK\q(N,e,a,\nu\w)$ was allowed to depend on another
parameter $\mu_0$ ($1\leq \mu_0\leq \nu$).  In this paper,
we have restricted to the case $\mu_0=\nu$.
All of the methods in this paper transfer seamlessly
over to the case of general $\mu_0$; we leave such details
to the interested reader.

In \cite{SteinStreetMultiParameterSingRadonLp}, the coordinates
of the dilations $e_j$ were allowed to be elements of $\q[0,\infty\w)$,
instead of $\N$.  This assumption was used in the proof
of Theorem \ref{ThmFiniteGenControl}, but nowhere else.
To deal with more general $e_j$, Theorem \ref{ThmFiniteGenControl}
can be replaced by the following proposition.

\begin{prop}
Suppose
\begin{equation*}
\sS \subseteq \sA_N^n\times \q[0,\infty\w)^\nu
\end{equation*}
is such that for every $M$, the set
\begin{equation*}
\sC_M:=\q\{c\in \q[0,M\w] : \exists \q(Y,d_0\w)\in \sS\text{ with some coordinate of }d_0 \text{ equal to }c\w\}
\end{equation*}
is finite.
Then there exists a finite subset $\sF\subseteq \sS$ such that every
$\q(g,e\w)\in \sS$
can be written in the form,
\begin{equation}\label{EqnToShowSpanClosing}
g\q(x\w)=\sum_{\substack{\q(f,d\w)\in \sF\\d\leq e}} c_{\q(f,d\w)}\q(x\w) f\q(x\w);
\end{equation}
where $c_{\q(f,d\w)}\in \sA_N$, and $d\leq e$ means that the inequality
holds for each coordinate.
The neighborhood on which \eqref{EqnToShowSpanClosing} holds
may depend on $\q(g,e\w)$.
\end{prop}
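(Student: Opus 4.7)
The plan is to mimic the proof of Theorem \ref{ThmFiniteGenControl} after first replacing the continuous degrees in $\q[0,\infty\w)^\nu$ by integer indices via an enumeration, then invoking the Noetherian property of $\sA_{\nu+N}^n$ just as before.

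\textbf{Step 1: Discretize the degrees.} For each $\mu\in \nuset$, let
\[
\sC^\mu := \q\{c\in \q[0,\infty\w) : \exists \q(Y,d\w)\in \sS \text{ with } d_\mu=c\w\}.
\]
The hypothesis that $\sC_M$ is finite for every $M$ implies $\sC^\mu\cap \q[0,M\w]$ is finite for every $M$, so $\sC^\mu$ is a discrete, order-isomorphic copy of a subset of $\N$. Enumerate $\sC^\mu=\q\{c^\mu_1<c^\mu_2<\cdots\w\}$ and define $\phi_\mu:\sC^\mu\rightarrow \N$ by $\phi_\mu\q(c^\mu_k\w)=k$. For a degree $d=\q(d_1,\ldots,d_\nu\w)$ with each $d_\mu\in \sC^\mu$, set $\phi\q(d\w)=\q(\phi_1\q(d_1\w),\ldots,\phi_\nu\q(d_\nu\w)\w)\in \N^\nu$. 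Crucially, because each $\phi_\mu$ is order-preserving, $d\leq e$ coordinatewise in $\q[0,\infty\w)^\nu$ if and only if $\phi\q(d\w)\leq \phi\q(e\w)$ coordinatewise in $\N^\nu$.

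\textbf{Step 2: Apply the Noetherian argument with the new indexing.} Define the map
\[
\iota:\sS\rightarrow \sA_{\nu+N}^n, \qquad \iota\q(f,d\w)=s^{\phi\q(d\w)}f\q(x\w),\quad s\in \R^\nu,
\]
which takes values in the \emph{polynomial-times-analytic} subring and hence in $\sA_{\nu+N}^n$. Let $M$ be the $\sA_{\nu+N}$-submodule of $\sA_{\nu+N}^n$ generated by $\iota\sS$. By Proposition \ref{PropNoeMod}, $M$ is finitely generated, so there exists a finite subset $\sF\subseteq \sS$ with $\iota \sF$ generating $M$.

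\textbf{Step 3: Extract the representation.} Given $\q(g,e\w)\in \sS$, since $s^{\phi\q(e\w)}g\q(x\w)\in M$, we may write
\[
s^{\phi\q(e\w)}g\q(x\w) = \sum_{\q(f,d\w)\in \sF} \ch_{\q(f,d\w)}\q(s,x\w)\, s^{\phi\q(d\w)}f\q(x\w).
\]
Apply $\frac{1}{\phi\q(e\w)!}\frac{\partial}{\partial s}^{\phi\q(e\w)}\big|_{s=0}$ to both sides. On the left this yields $g\q(x\w)$; on the right only those terms with $\phi\q(d\w)\leq \phi\q(e\w)$ survive, which by Step 1 are exactly those with $d\leq e$. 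The remaining coefficient is a germ of a real analytic function in $x$, so we obtain
\[
g\q(x\w) = \sum_{\substack{\q(f,d\w)\in \sF\\ d\leq e}} c_{\q(f,d\w)}\q(x\w) f\q(x\w),
\]
as desired. The neighborhood on which the identity holds is just the common neighborhood on which the $\ch_{\q(f,d\w)}$ converge, which may depend on $\q(g,e\w)$.

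\textbf{Main obstacle.} There is no serious obstacle beyond finding the right substitute for the monomials $t^d$; the finiteness hypothesis on $\sC_M$ is precisely what is needed so that the re-indexing $\phi$ is well-defined, order-preserving, and converts the polynomial power structure of the original proof into something meaningful when degrees are no longer integers. Once this substitution is in place, the Noetherian/differentiation mechanism of Theorem \ref{ThmFiniteGenControl} transports verbatim.
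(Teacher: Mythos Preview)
Your proof is correct but takes a genuinely different route from the paper's. The paper argues by induction on $\nu$: it first uses Noetherianity to pick $\q(X_1,d_1\w),\ldots,\q(X_r,d_r\w)\in\sS$ whose first components generate the module spanned by all the $Y$'s, sets $M=\max_l \q|d_l\w|_\infty$, and then partitions $\sS$ into a ``large'' piece $\sS_0$ (every coordinate of $d$ exceeds $M$, so automatically $d_l\leq e$) together with finitely many ``slices'' $\sS_\mu^l$ where the $\mu$th coordinate is frozen at some $c_l\in\sC_M$; each slice is handled by the inductive hypothesis after suppressing that coordinate. Your approach instead recognizes that the finiteness hypothesis makes each coordinate set $\sC^\mu$ order-isomorphic to a subset of $\N$, and that this order isomorphism transports the whole problem back to the integer-degree setting of Theorem \ref{ThmFiniteGenControl}, whose Noetherian-plus-differentiation argument then runs verbatim. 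Your argument is shorter and conceptually cleaner, identifying the proposition as Theorem \ref{ThmFiniteGenControl} in disguise; the paper's induction is more hands-on and makes the role of the finiteness hypothesis visible at each stage, at the cost of more bookkeeping.
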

\begin{proof}
The proof proceeds by induction on $\nu$.  The base case, $\nu=0$,
follows directly from Proposition \ref{PropNoeMod}.
We assume we have the result for $\nu-1$ and prove it for $\nu$.

Let $\sM$ be the module generated by $\q\{Y:\exists \q(Y,d\w)\in \sS\w\}$.
By Proposition \ref{PropNoeMod}, $\sM$ is finitely generated.
Take $\q(X_1,d_1\w),\ldots, \q(X_r,d_r\w)\in \sS$ such that
$X_1,\ldots, X_r$ generate $\sM$.
Define $M=\max_{1\leq l \leq r} \q|d_l\w|_\infty$,
and let $c_1,\ldots, c_L$ be an enumeration of $\sC_M$.
Define,
\begin{equation*}
\sS_0:=\q\{\q(Y,d\w)\in \sS : \text{every coordinate of d is }> M\w\},
\end{equation*}
and for $1\leq \mu\leq \nu$, $1\leq l\leq L$,
\begin{equation*}
\sS_\mu^l := \q\{\q(Y,d\w)\in \sS : \text{the }\mu\text{ coordinate of }d\text { equals }c_l\w\}.
\end{equation*}
By our assumption on $\sS$,
\begin{equation}\label{EqnSPartitioned}
\sS=\sS_0\bigcup \q[\bigcup_{\mu=1}^\nu \bigcup_{l=1}^L \sS_\mu^l\w].
\end{equation}
Note that every $\q(Y,e\w)\in \sS_0$ can be written in the form,
\begin{equation}\label{EqnS0part}
Y=\sum_{d_j\leq e} c_j X_j,
\end{equation}
by our construction of $\sS_0$.

We apply our inductive hypothesis to $\sS_\mu^l$ (which we may think of
as a subset of $\sA_N^n\times\q[0,\infty\w)^{\nu-1}$ by suppressing
the $\mu$th coordinate of $d$ for each $\q(Y,d\w)\in \sS_\mu^l$,
since we know it to be equal to $c_l$).
We therefore obtain a finite subset $\sF_{\mu}^l\subseteq\sS_\mu^l$,
as in the conclusion of the proposition (with $\sS$ replaced by $\sS_\mu^l$).

By \eqref{EqnS0part} and \eqref{EqnSPartitioned} it is immediate
to verify that
\begin{equation*}
\q\{\q(X_1,d_1\w),\ldots, \q(X_l,d_l\w)\w\}\bigcup\q[ \bigcup_{\mu=1}^\nu \bigcup_{l=1}^L \sF_\mu^l\w]
\end{equation*}
satisfies the conclusion of the proposition.
\end{proof}


\bibliographystyle{amsalpha}

\bibliography{radon}



\center{MCS2010: Primary 42B20, Secondary 42B25, 26E05, 32B05}

\center{Keywords: Calder\'on-Zygmund theory, singular integrals, singular
Radon transforms, 
real analytic surfaces, Weierstrass preparation,
maximal Radon transforms, Littlewood-Paley theory, product
kernels, flag kernels, Carnot-Carath\'eodory geometry}

\end{document}